\algnewcommand{\IIf}[1]{\State\algorithmicif\ #1\ \algorithmicthen}
\def\blx@maxline{77}
\title{Non-Asymptotic Chernoff Lower Bound and Its Application to Community Detection in Stochastic Block Model}
\author{Zhixin Zhou and Ping Li}
\date{}
\newcommand{\z}{z} 
\newcommand{\nr}{n}
\newcommand{\Kr}{K}
\newcommand{\eps}{\varepsilon}
\newcommand{\ints}{\mathbb Z}
\newcommand{\f}{\frac}
\newcommand{\E}{\mathbb E}
\newcommand{\summ}[2]{\sum_{#1 = 1}^{#2}}
\newcommand{\summm}[3]{\sum_{#1 = #2}^{#3}}
\renewcommand{\P}{\mathbb P}
\DeclareMathOperator{\mis}{Mis}
\newcommand{\lp}{\Big(}
\newcommand{\rp}{\Big)}
\newcommand{\Are}{A_{\text{re}}}
\DeclareMathOperator{\SC}{SC}
\newcommand{\beq}{\begin{eqnarray*}}
	\newcommand{\eeq}{\end{eqnarray*}}
\newcommand{\bal}{\begin{align}}
\newcommand{\eal}{\end{align}}
\newcommand{\Var}{\text{Var}}
\newcommand{\vertiii}[1]{{\left\vert\kern-0.25ex\left\vert\kern-0.25ex\left\vert #1 
		\right\vert\kern-0.25ex\right\vert\kern-0.25ex\right\vert}}
\newcommand{\match}[0]{{\textsc{Match}}\xspace}
\newcommand{\lr}{{l}}
\newcommand{\Sc}{\mathcal{S}}
\newcommand{\ones}{{\bf 1}}
\renewcommand{\SC}{\text{SC}}
\newtheorem{theorem}{Theorem}[section]
\newtheorem{corollary}{Corollary}
\newtheorem{lemma}{Lemma}
\newtheorem{prop}{Proposition}
\newtheorem{remark}{Remark}
\newcommand{\cd}[2]{D_\alpha({#1}\|{#2})}
\newcommand{\cds}[2]{D_{\alpha^*}({#1}\|{#2})}
\newcommand{\kld}[2]{D_{\text{KL}}({#1}\|{#2})}
\newcommand{\tv}[2]{D_{\text{TV}}({#1}\|{#2})}
\newcommand{\reals}{\mathbb R}
\newcommand{\infnorm}[1]{\|{#1}\|_\infty}
\begin{document}
\maketitle

\begin{abstract}
	Chernoff coefficient is an upper bound of Bayes error probability in classification problem. In this paper, we will develop sharp Chernoff type bound on Bayes error probability. The new bound is not only an upper bound but also a lower bound of Bayes error probability up to a constant in a non-asymptotic setting. Moreover, we will apply this result to community detection in stochastic block model. As a clustering problem, the optimal error rate of community detection can be characterized by our Chernoff type bound. This can be formalized by deriving a minimax error rate over certain class of parameter space, then achieving such error rate by a feasible algorithm employ multiple steps of EM type updates.
\end{abstract}

\section{Introduction}

Many classification and clustering problems can reduce to a \emph{symmetric hypothesis testing} problem.  In a classical setting, given two hypothesis $H_0$ and $H_1$, where $H_i$ assumes that observing data from a measurable space with distribution $P_i$, one discriminates between them according to certain decision rule.  Type-I error occurs if one accepts $H_0$ while the data are generated from distribution $P_1$, and vice versa on Type-II error. Symmetric hypothesis testing indicates that the hypotheses are equiprobable, and the loss function weights type-I error and type-II error equally. Hence, we would like to focus on \emph{Bayes error probability}, which average two kinds of error probability.

The asymptotic and non-asymptotic behavior of Bayes error probability becomes an essential problem in symmetric hypothesis testing. Given probability density function (PDF) or probability mass function (PMF) $\varphi_0$ and $\varphi_1$ of distribution $P_0$ and $P_1$ respectively, the \emph{Chernoff information}, defined as
\begin{align}\label{eq:chernoff:inf}
\cds{\varphi_0}{\varphi_1} = -\sup_{\alpha\in(0,1)} \log \int \varphi_0^{1-\alpha}  \varphi_1^\alpha d\mu
\end{align}
plays an important role in the exponent of Bayes error since it was introduce in \cite{chernoff1952measure}. A Chernoff type lower bound was investigated in \cite{shannon1967lower}. A similar lower bound was introduced in \cite{blahut1974hypothesis}. It is still a very powerful tools in recent researches, such as community detection \cite{abbe2015community, gao2018community, zhou2018optimal} and quantum information theory \cite{nussbaum2009chernoff, dalai2013lower}. However, the ratio between \emph{Chernoff coefficient}, defined as $\exp(-\cds{\varphi_0}{\varphi_1})$,  and the Bayes error probability has not been answered in previous literature. In this paper, we will propose a new Chernoff type bound such that its ratio with Bayes error probability is upper and lower bounded by constants. Although a comparable ``second-order" asymptotics for \emph{asymmetric hypothesis testing} was investigated in \cite{li2014second, zhou2018second}, there is no direct application to the symmetric case. Indeed, a non-asymptotic bound in the symmetric case requires extra effort.

This paper will also apply the main result of Chernoff type upper and lower bound to one of popular clustering problems in statistics, namely community detection. Particularly, we will focus on \emph{stochastic block model} (SBM). Many theories and effective algorithms have been proposed for solving in SBMs, including global approaches such as spectral clustering~\cite{ rohe2011spectral, krzakala2013spectral, lei2013consistency, fishkind2013consistent, vu2014simple, massoulie2014community, yun2014accurate, bordenave2015non, gulikers2017spectral, pensky2017spectral, zhou2018spectral} and convex relaxations via  semidefinite programs (SDPs)~\cite{amini2014semidefinite, hajek2016achieving, bandeira2015random, guedon2016community, montanari2016semidefinite, ricci2016performance, agarwal2017multisection, perry2017semidefinite}. Global approaches usually involve a single optimization step (truncated SVD in spectral method and SDP in convex relaxations method) and do not require good initialization. However, these algorithms are usually not optimal on their own, because both SVD and SDP lose the block structure in SBM. The pseudo-likelihood approach~\cite{amini2013pseudo}
filled in the gap with local refinement and makes optimal clustering possible. The general idea was concluded as ``Good Initialization followed by Fast Local Updates'' (GI-FLU) by \cite{gao2017achieving}. Since the minimax error rate proposed in \cite{zhang2016minimax},  algorithms in the manner of GI-FLU are developed in \cite{chin2015stochastic, gao2017achieving, abbe2015community, zhou2018optimal}. However, as the optimal Chernoff upper and lower bounds were not used in these papers, the minimax rate is not sufficiently accurate and very few algorithm have been proved to be optimal. Details can be found in the following table. Here, $n$ is the number of nodes and $d$ is the average degree of a node in the network. $K$ denotes number of communities. $D$ indicates Chernoff information in \eqref{eq:chernoff:inf}, though it might have different notations in other papers. $o(1)$ is some unspecified positive sequences converging to 0.

\begin{table}[h]
	\caption{Comparison with existing results.}
	\begin{tabular}{c|c|c|c|c|c}
		\hline
		paper& density & symm. & $p$ vs $q$ & minimax error &algorithmic error  \\ \hline
		\cite{chin2015stochastic} & not needed & yes & needed & not derived & $\exp(-CD)$. $(C<1)$  \\ \hline
		\cite{abbe2015community} & $\Theta(\log n)$ & no  & not needed & not derived & $o(1/n)$   \\ \hline
		\cite{gao2017achieving} & not needed & yes & needed & $\exp(-(1+o(1))D)$ &  $\exp(-(1-o(1))D)$  \\ \hline
		\cite{zhou2018optimal}& $O(\sqrt n)$ & no & not needed & $\Omega(\exp(-D)/d^{K/2})$ & $O(\exp(-D)/\sqrt d)$\\ \hline
		ours & not needed & yes & not needed & $\Omega(\exp(-D)/\sqrt d)$ & $O(\exp(-D)/\sqrt d)$ \\ \hline
	\end{tabular}
\end{table}
Some features or assumptions of the problem are described as follows. \emph{Density} indicates the average degree of a node. 
\emph{Symmetry (symm.)} means the paper assumes that the network is an undirected graph. Community detection on symmetric network is usually more difficult since half edges are duplicated. If a paper assumes ``$p$ vs $q$", that means the the probability of connections within the same community is higher than the ones between different communities. This setting can be generalized to assortative condition in \cite{amini2014semidefinite}. The algorithm and its analysis are simpler than general SBM without this assumption. \emph{Minimax error rate} can be considered as fundamental limit of community detection problem. \emph{Algorithmic error rate} are the theoretical guarantees of feasible algorithms in different papers..\par

Block partitioning skills introduced in \cite{chin2015stochastic} generate enough independence between different steps of their algorithm. However, the last local update can only apply on half of dataset, so the error rate is much higher than $\exp(-D)$. Algorithm derived in \cite{gao2017achieving} has error rate similar as the minimax error rate in \cite{zhang2016minimax}, but the term $\exp(o(1)D)$ can be arbitrary divergence sequence. The analysis in \cite{abbe2015community} focus on the density regime $\Theta(\log n)$, but it cannot generalize to other densities or symmetric case. To achieve an optimal error rate, the algorithm in \cite{zhou2018optimal} allows twice local update. However, their approach cannot extend to undirected network. We will combine different existing techniques and propose a new algorithm that achieves the minimax error rate (up to a constant).\par
We summarize the contributions of this paper as follows:
\begin{itemize}
	\item[1.] We demonstrate sharp non-asymptotic Chernoff type upper and lower bound for Bayes error probability.
	\item[2.] We proposed a sharp non-asymptotic minimax lower bound for community detection in general SBMs.
	\item[3.] We provide a feasible algorithm which guarantees to achieve the minimax lower bound up to a constant.
\end{itemize}

The rest of the paper will be organized as follows. We introduce the Chernoff type upper and lower bound in Section \ref{sec:chernoff}, then we present our minimax lower bound and the provable community detection algorithm with its analysis in Section \ref{sec:community}. Simulations will appear in Section~\ref{sec:simulation}. Proofs of Theorems  and corollaries in Section~\ref{sec:chernoff} will appear in Section \ref{sec:proofs}. Proofs about minimax error rate and consistency of community detection can be found in Section~\ref{sec:proofs:sec:3}.


\section{Non-asymptotic Chernoff upper and lower bounds}\label{sec:chernoff}

We will introduce a fundamental testing problem under a Bayes setting, then describe its relation with Chernoff coefficient. As part of main contributions of this paper, we will present a new Chernoff type upper and lower bound of Bayes error probability. We will also introduce its application to distribution in exponential family as a useful example.

\subsection{General cases}\label{sec:general:cases}

We will define a symmetric hypothesis testing problem and its Bayes error probability. Let $\varphi_{0j}$ and $\varphi_{1j}$ for $j\in [n]$ be two sequences of PDFs for one-dimensional real random variables. Same results hold if they are PMFs, but we only consider PDFs for brevity. We assume for every $j\in\mathbb [n]$, $\varphi_{0j}$ and  $\varphi_{1j}$ are defined on the same measure space $(\Omega_j, \Sigma_j, \mu)$. Let us write 

\begin{align}\label{eq:def:prod:space}
\begin{split}
\Omega := \prod_{j=1}^n \Omega_j, \quad & \Sigma := \bigotimes_{j=1}^n  \Sigma_j, \quad \text{and}\\
\varphi_z(x) := \varphi_z(x_1,\dots,x_n) &:= \prod_{j=1}^n \varphi_{zj}(x_j)  \quad \text{for } z\in\{0,1\}.
\end{split}
\end{align}
Furthermore, we denote the \emph{Kullback–Leibler divergence} from $\varphi_1$ to $\varphi_0$ as
\begin{align*}
\kld{\varphi_0}{\varphi_1} = \int_\Omega \varphi_0 \log \f{\varphi_1}{\varphi_0} d\mu.
\end{align*}
We assume both $\kld{\varphi_0}{\varphi_1}$ and $\kld{\varphi_1}{\varphi_0}$ are positive real numbers, which implies $\int_\Omega (\varphi_0+\varphi_1) \big|\log \f{\varphi_1}{\varphi_0}\big| d\mu<\infty$. In particular, it requires $\varphi_0$ and $\varphi_1$ have the same support, but take different values on a set with non-zero measure. For a pair of PDFs satisfying these conditions, we say
\begin{align}\label{eq:density:family}
(\varphi_0, \varphi_1)\in \mathcal F (\Omega, \Sigma, \mu, n).
\end{align}
Now we randomly draw a number $z\in\{0,1\}$ with equal probability $1/2$, and draw a random sample $X = \{X_1,\dots,X_n\}$ where $X_j\sim \varphi_{zj}$ independently by definition. We are interested in recovering $z$ given $X=x$. For any estimator $\hat z := \hat z(x)$ of $z$, we define the \emph{Bayes error probability} and the corresponding \emph{Bayes estimator} as
\begin{align*}
R(\hat z, z) := \f 12 \sum_{z\in\{0,1\}}\P(\hat z\ne z)
\quad\text{where}\quad
\hat z := \arg \max_{z \in \{0,1\}} \varphi_z(x).
\end{align*}
Bayes estimator is the best estimator by Neyman-Pearson lemma. The Bayes error probability is closely related to \emph{total variation affinity} between $\varphi_0$ and $\varphi_1$, denoted as $\eta(\varphi_0, \varphi_1)$, which will be defined as follows:
\begin{align}\label{eq:tv:affinity}
\begin{split}
\eta(\varphi_0, \varphi_1) &:= \int_\Omega \min(\varphi_0, \varphi_1) d\mu 
= \int_\Omega \varphi_0 1\{\varphi_0 \le \varphi_1\} d \mu  + \int_\Omega \varphi_1 1\{\varphi_1 < \varphi_0\} d \mu = 2R(\hat z, z).
\end{split}
\end{align}
The naming of total variation affinity comes from the fact that
\begin{align*}
\eta(\varphi_0, \varphi_1) = 1- \tv{\varphi_0}{\varphi_1},
\quad\text{where}\quad
\tv{\varphi_0}{\varphi_1} := \sup_{A\in\Sigma} \Big|\int_A \varphi_0-\varphi_1 d\mu\Big|.
\end{align*}
Now we can focus on the total variation affinity and express it as
\begin{align}
\label{eq:bayes:min}
\eta(\varphi_0, \varphi_1)
= \int_\Omega \min(\varphi_0,\varphi_1) d\mu
= \int_\Omega \varphi_0^{1-\alpha} \,\varphi_1^{\alpha} \, \min(\lr^{\alpha},\lr^{\alpha-1}) d\mu.
\end{align}
where $\lr= \varphi_0/\varphi_1$ is the likelihood ratio defined point-wisely on $\Omega$. We observe that $\varphi_0^{1-\alpha} \,\varphi_1^{\alpha}$ is a PDF on $\Omega$ up to a normalizer and $\min(\lr^{\alpha},\lr^{\alpha-1})$ is a real valued function on $\Omega$, so it would be convenient to express $\eta(\varphi_0, \varphi_1)$ as an expectation. For $\alpha\in(0,1)$, we define PDF 
\begin{align}
\begin{split}
\varphi_\alpha(x) :=  \varphi_{0}(x)^{1-\alpha} \,\varphi_{1}(x)^{\alpha} e^{\cd{\varphi_{0}}{\varphi_{1}}},
\quad\text{where}\quad 
\cd{\varphi_0}{\varphi_1} := -\log \int_{\Omega} \varphi_{0}^{1-\alpha} \,\varphi_{1}^{\alpha} d\mu
\end{split}
\label{eq:def:chernoff:div}
\end{align}
is the \emph{Chernoff $\alpha$-divergence} between $\varphi_0$ and $\varphi_1$.  We also define a real valued function
\begin{align}\label{eq:g:alpha}
g_\alpha: \mathbb R \to \mathbb R, \quad g_\alpha(x) := \exp[\min(\alpha x, (\alpha-1) x)] = \min(e^{\alpha x}, e^{(\alpha-1)x}),
\end{align}
Then by direct calculation from \eqref{eq:bayes:min}, we have we have
\begin{align}\label{eq:affinity:exp}
\begin{split}
\eta(\varphi_0, \varphi_1)
= e^{-\cd{\varphi_0}{\varphi_1}}\int_\Omega \varphi_\alpha \min(\lr^{\alpha},\lr^{\alpha-1}) d\mu
=e^{-\cd{\varphi_0}{\varphi_1}}\E_{Y\sim \varphi_\alpha}[g_\alpha(\log l(Y))].
\end{split}
\end{align}
We note that since $g_\alpha(x)\le 1$, we always have $\E_{Y\sim \varphi_\alpha}[g_\alpha(\log l(Y))]\le 1$, which implies $e^{-\cd{\varphi_0}{\varphi_1}}$ is an upper bound of $\eta(\varphi_0, \varphi_1)$. In the last expression, $Y:=(Y_1, \dots, Y_n)$ is a random vector with independent elements on the product space $\Omega$, and one can observe that
\begin{align}\label{eq:decomp:Y}
\begin{split}
Y_j  \sim \varphi_{\alpha j}:= \varphi_{0j}^{1-\alpha} \,\varphi_{1j}^{\alpha} e^{\cd{\varphi_{0j}}{\varphi_{1j}}},
\quad\text{where}\quad 
\cd{\varphi_{0j}}{\varphi_{1j}} := -\log \int_{\Omega_j} \varphi_{0j}^{1-\alpha} \,\varphi_{1j}^{\alpha} d\mu_j.
\end{split}
\end{align}
Let $l_j = {\varphi_{0j}}/{\varphi_{1j}}$, then we can center and decompose $\log l(Y)$ as
\begin{align}\label{eq:decomp:log:l:Y}
\log l(Y) - \E[\log l(Y)]= \sum_{j=1}^n \log l_j(Y_j) - \E[\log l(Y_j)] =:\sum_{j=1}^n Z_j.
\end{align}
$\log l(Y)$ is indeed a summand of independent random variables, so it is approximately normally distributed under some regularization condition, which will be specified in the following theorem.

\begin{theorem}\label{thm:bayes:chernoff}
	We consider the PDFs or PMFs $(\varphi_0, \varphi_1)\in \mathcal F (\Omega, \Sigma, \mu, n)$ defined in \eqref{eq:density:family}, and recall the definitions of $\varphi_\alpha$ in \eqref{eq:def:chernoff:div}, $g_\alpha$ in \eqref{eq:g:alpha}, $\varphi_{\alpha j}$ in \eqref{eq:decomp:Y}, $Y_j$, $Z_j$ in \eqref{eq:decomp:log:l:Y} and $l_j = {\varphi_{0j}}/{\varphi_{1j}}$. We let
	\begin{align*}
	&\alpha^* := \arg\max_{\alpha\in(0,1)} \cd{\varphi_0}{\varphi_1},   \quad
	Y_j\sim \varphi_{\alpha^* j} , \\
	&Z_j = \log l_j(Y_j) - \E[\log l_j(Y_j)] \quad \text{and}  \quad
	\bar \sigma_n := \Big(\f 1n \sum_{j=1}^n \Var[Z_j] \Big)^{1/2}.
	\end{align*}
	If
	${\sum_{j=1}^n \E|Z_j|^3} \le C_1 n \bar\sigma_n^2$, then there exists constant $C_2$ which only depends on $C_1$ such that
	\begin{align*}
	\E_{Y\sim \varphi_{\alpha^*}}[g_{\alpha^*}(\log l(Y))]
	\le  \f{C_2}{\sqrt n\bar \sigma_n (1-\alpha^*)\alpha^*}.
	\end{align*}
	Furthermore, there exists positive constants $C_3$ and $C_4$ which only depend on $C_1$, such that, if $\sqrt n \bar\sigma_n(1-\alpha^*)\alpha^*\ge C_3$, then
	\begin{align*}
	\E_{Y\sim \varphi_{\alpha^*}}[g_{\alpha^*}(\log l(Y))] \ge \f{C_4}{\sqrt n\bar \sigma_n (1-\alpha^*)\alpha^*}.
	\end{align*}
	As a direct consequence of \eqref{eq:affinity:exp},
	\begin{align*}
	\f{C_3}{\sqrt n\bar \sigma_n \alpha^*(1-\alpha^*)} e^{-\cds{\varphi_0}{\varphi_1}}\le \eta(\varphi_0, \varphi_1) \le \f{C_4}{\sqrt n\bar \sigma_n \alpha^*(1-\alpha^*)} e^{-\cds{\varphi_0}{\varphi_1}},
	\end{align*}
	where $\cds{\varphi_0}{\varphi_1}$ is Chernoff information defined in \eqref{eq:chernoff:inf}. By \eqref{eq:tv:affinity}, same bounds holds for $2R(\hat z,z)$.
\end{theorem}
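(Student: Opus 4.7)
The plan is to reduce $\E_{Y\sim\varphi_{\alpha^*}}[g_{\alpha^*}(\log l(Y))]$ to a sharp central-limit estimate for the centered sum $S_n = \sum_{j=1}^n Z_j$ under the tilted measure $\varphi_{\alpha^*}$. The starting observation is that $\alpha^*$, being the maximizer of the concave function $\alpha\mapsto\cd{\varphi_0}{\varphi_1}$, satisfies the first-order condition $\E_{Y\sim\varphi_{\alpha^*}}[\log l(Y)] = 0$: a direct differentiation shows that the derivative of $\cd{\varphi_0}{\varphi_1}$ in $\alpha$ is exactly $\E_{Y\sim\varphi_\alpha}[\log l(Y)]$. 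Consequently, under $\varphi_{\alpha^*}$ the quantity $\log l(Y)$ coincides with $S_n$ and has mean zero and variance $n\bar\sigma_n^2$, while the third-moment hypothesis $\sum_j \E|Z_j|^3 \le C_1 n\bar\sigma_n^2$ is precisely the Lyapunov-type condition needed for a Berry-Esseen bound.

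I would then use the piecewise definition of $g_{\alpha^*}$ to split
\begin{align*}
\E[g_{\alpha^*}(S_n)] = \E[e^{\alpha^* S_n} 1\{S_n \le 0\}] + \E[e^{(\alpha^*-1)S_n} 1\{S_n > 0\}];
\end{align*}
by the symmetry obtained on swapping $(\varphi_0,\varphi_1)$ and sending $\alpha^* \mapsto 1-\alpha^*$ the two terms can be analyzed identically, and they supply the $1/\alpha^*$ and $1/(1-\alpha^*)$ factors respectively. If $S_n$ were genuinely Gaussian $N(0,n\bar\sigma_n^2)$, a direct computation gives the first term as $e^{\alpha^{*2} n\bar\sigma_n^2/2}\,\Phi(-\alpha^*\sqrt n\,\bar\sigma_n)$, which Mills' ratio pins between two constant multiples of $1/(\alpha^*\sqrt n\,\bar\sigma_n)$ whenever $\alpha^*\sqrt n\,\bar\sigma_n$ is bounded below. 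In the opposite small regime the upper bound follows trivially from $\E[\,\cdot\,]\le 1/2$, while for the lower bound the hypothesis $\sqrt n\,\bar\sigma_n\alpha^*(1-\alpha^*)\ge C_3$ rules this case out.

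To pass from the Gaussian reference to the true $S_n$, I would apply Fubini to rewrite
\begin{align*}
\E[e^{\alpha^* S_n} 1\{S_n \le 0\}] = \int_{-\infty}^0 \alpha^* e^{\alpha^* s}\bigl[F_n(0) - F_n(s)\bigr]\,ds,
\end{align*}
where $F_n$ is the CDF of $S_n$ under $\varphi_{\alpha^*}$, and then replace $F_n$ by the Gaussian CDF $\Phi(\,\cdot\,/(\sqrt n\,\bar\sigma_n))$. Berry-Esseen gives a uniform error of order $1/(\sqrt n\,\bar\sigma_n)$, which after being integrated against $\alpha^* e^{\alpha^* s}$ contributes an additive error of the same order. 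For the upper bound this error is absorbed either into the Gaussian main term (when $\alpha^*$ is bounded away from $0$) or into the trivial bound $1/2$ (otherwise); for the lower bound one instead requires the error to be at most, say, half of the Gaussian main term, which is exactly what $\sqrt n\,\bar\sigma_n\alpha^*(1-\alpha^*)\ge C_3$ buys once $C_3$ is chosen large enough. The claimed bounds on $\eta(\varphi_0,\varphi_1)$ and on $R(\widehat z, z)$ are then immediate from \eqref{eq:affinity:exp} and \eqref{eq:tv:affinity}.

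The main difficulty I expect is the mismatch between the exponential weight $\alpha^* e^{\alpha^* s}$, which concentrates in a window of width $1/\alpha^*$ around $0$, and the Berry-Esseen bound, which is uniform and does not exploit that concentration: a careless application yields an error of order $1/(\sqrt n\,\bar\sigma_n)$ rather than the ``ideal'' $1/(\alpha^*\sqrt n\,\bar\sigma_n)$. Reconciling the two sides, especially for the lower bound where no trivial fallback is available, is exactly what forces the quantitative hypothesis $\sqrt n\,\bar\sigma_n\alpha^*(1-\alpha^*)\ge C_3$ and drives the careful case split into the $\alpha^*\sqrt n\,\bar\sigma_n$ large/small regimes.
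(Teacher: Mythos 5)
Your overall strategy---reduce to a Berry--Esseen estimate for the centered log-likelihood sum under the tilted measure $\varphi_{\alpha^*}$, after noting $\E_{\varphi_{\alpha^*}}[\log l(Y)]=0$---is the same as the paper's, and your two-term decomposition of $\E[g_{\alpha^*}(S_n)]$ is equivalent after Fubini to the paper's layer-cake identity
\begin{align*}
\E[g_{\alpha}(\log l(Y))]
 = \int_0^1 F\Big( \tfrac{\log x}{(\alpha -1)\sigma}\Big) - F\Big( \tfrac{\log x}{\alpha\sigma} \Big)\,dx ,
\end{align*}
so the upper bound in your plan goes through essentially as in the paper. The gap is in the lower bound. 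You write that the condition $\sqrt n\,\bar\sigma_n\alpha^*(1-\alpha^*)\ge C_3$ ``buys'' that the Berry--Esseen error is at most half the Gaussian main term once $C_3$ is large. It does not: the Gaussian main term is $\approx \frac{1}{\sqrt{2\pi}\,\sigma\,\alpha^*(1-\alpha^*)}$ with $\sigma=\sqrt n\,\bar\sigma_n$, while a plain application of Berry--Esseen on the full integral costs an error of order $\frac{C}{\sigma}$ with $C=0.56\,C_1$. Their ratio is $\approx \sqrt{2\pi}\,C\,\alpha^*(1-\alpha^*)$, which is \emph{independent of} $\sigma$ and hence of $C_3$. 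Since $\alpha^*(1-\alpha^*)$ can be as large as $1/4$, the ratio can exceed $1$ as soon as $C_1\gtrsim 1.4$, and the lower bound would be negative. So the hypothesis on $\sqrt n\,\bar\sigma_n\alpha^*(1-\alpha^*)$ alone cannot rescue the naive bound; a new idea is needed.

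The paper's fix is a truncation device that your plan does not contain and does not hint at. Replace the integral $\int_0^1$ by $\int_0^t$ for a constant $t=\exp[-2\sqrt{2\pi}\,C\,\alpha^*(1-\alpha^*)]\in(0,1)$. This scales the Berry--Esseen cost down to $\frac{2tC}{\sigma}$. After Fubini, the truncated Gaussian integral splits into two Mills-ratio pieces (each $\gtrsim \frac{e^{-2\sqrt{2\pi}C}}{5\sigma\alpha^*}$ resp.\ $\frac{e^{-2\sqrt{2\pi}C}}{5\sigma(1-\alpha^*)}$) \emph{plus} a boundary term $t\big[\Phi\big( \tfrac{\log t}{\sigma(\alpha^*-1)}\big) - \Phi\big( \tfrac{\log t}{\sigma\alpha^*}\big)\big]$. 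With this particular choice of $t$, a Taylor expansion of $\Phi$ at $0$ shows the boundary term equals $\frac{2tC}{\sigma}$ to leading order and so \emph{cancels} the Berry--Esseen cost up to a cubic residual $O(C^3/\sigma^3)$. Only at this point does the hypothesis $\sigma\,\alpha^*(1-\alpha^*)\ge C_3$ enter, and only to dominate the cubic residual by the main term, i.e.\ to guarantee $\frac{32\pi C^3}{3\sigma^3}\le \frac{e^{-2\sqrt{2\pi}C}}{6\,\sigma\,\alpha^*(1-\alpha^*)}$. Without this truncate-and-cancel mechanism your lower bound does not close for general $C_1$, so the proposal has a genuine gap precisely at the step you already flagged as the crux.
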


\begin{remark}
	A possible (but not necessarily optimal) choice of $C_2, C_3$ and $C_4$ can be $C_2=2\vee 2(0.56C_1)^{3/2}\exp(\sqrt {2\pi}C_1)$, $C_3=\exp(-2(0.56)\sqrt{2\pi}C_1)/30$, and $C_4=1+0.28C_1$. A gap between $C_3$ and $C_4$ exists in general, which will be shown empirically by simulation in Section~\ref{sec:simulation}.
\end{remark}

\begin{remark}
	The names of Chernoff information/coefficient/divergence in this paper are according to a recent survey \cite{crooks2017measures}. Chernoff information indicates the Chernoff $\alpha$-divergence with $\alpha=\alpha^*$ which maximize $\cd{\varphi_0}{\varphi_1}$. We are not going to calculate the exact value of $\alpha^*$ in this paper. $\cds{\ \cdot\ }{\ \cdot\ }$ is a notation for Chernoff information, and $\alpha^*$ might be different for variant inputs. $\alpha^*$ in Theorem \ref{thm:bayes:chernoff} is unique since we assume $\varphi_0$ and $\varphi_1$ is different on a set with positive measure.
\end{remark}

To gain better understanding of Theorem \ref{thm:bayes:chernoff}, we will derive a corollary of the i.i.d. case. Under such assumptions, many quantities in the theorem become constants. On the other hand, many existing results only consider i.i.d. cases. It is convenient to compare with them with this corollary.

\begin{corollary}[i.i.d. case]\label{cor:bayes:chernoff}
	Let $(\varphi_0^{(n)}, \varphi_1^{(n)})\in \mathcal F (\Omega, \Sigma, \mu, n)$ be a sequence of PMFs or PDFs satisfying  $\varphi_z^{(n)}(x)=\prod_{j=1}^n \bar \varphi_z(x_j)$ for $z\in\{0,1\}$ where $\bar \varphi_0$ and $\bar \varphi_1$ are fixed, then
	\begin{align*}
	\eta(\varphi_0^{(n)}, \varphi_1^{(n)}) = \Theta \Big( \f{1}{\sqrt n} e^{-n\cds{\bar \varphi_0}{\bar \varphi_1}}\Big).
	\end{align*}
\end{corollary}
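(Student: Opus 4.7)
The plan is to derive the corollary as a direct application of Theorem~\ref{thm:bayes:chernoff} to the product distribution $\varphi_z^{(n)}(x) = \prod_{j=1}^n \bar\varphi_z(x_j)$. The starting observation is that in the i.i.d.\ setting every quantity appearing in Theorem~\ref{thm:bayes:chernoff} either reduces to a per-coordinate constant (independent of $n$) or scales linearly in $n$. First, by the product structure,
\begin{align*}
\cd{\varphi_0^{(n)}}{\varphi_1^{(n)}} = -\log \int_\Omega \prod_{j=1}^n \bar\varphi_0(x_j)^{1-\alpha}\bar\varphi_1(x_j)^\alpha\, d\mu(x) = n\,\cd{\bar\varphi_0}{\bar\varphi_1},
\end{align*}
so the optimizer $\alpha^* = \arg\max_{\alpha\in(0,1)}\cd{\bar\varphi_0}{\bar\varphi_1}$ does not depend on $n$.

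Second, each $Z_j = \log l_j(Y_j) - \E[\log l_j(Y_j)]$ is an i.i.d.\ copy of a single random variable $Z$ with $Y\sim \bar\varphi_{\alpha^*}$. Let $\sigma^2 := \Var[Z]$ and $\tau^3 := \E|Z|^3$; membership $(\bar\varphi_0,\bar\varphi_1)\in\mathcal F(\Omega_1,\Sigma_1,\mu,1)$ together with the implicit assumption of a finite third moment (absorbed into the $\Theta$ constants) gives $0<\sigma<\infty$ and $\tau<\infty$. Then $\bar\sigma_n \equiv \sigma$, and the third-moment hypothesis
\begin{align*}
\sum_{j=1}^n \E|Z_j|^3 = n\tau^3 \les C_1 n\sigma^2
\end{align*}
is satisfied with $C_1 := \tau^3/\sigma^2$, a constant independent of $n$.

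Third, I verify the lower-bound activation condition. Since $\sqrt n\,\bar\sigma_n\,\alpha^*(1-\alpha^*) = \sqrt n\cdot \sigma\cdot\alpha^*(1-\alpha^*)\to\infty$, the hypothesis $\sqrt n\bar\sigma_n\alpha^*(1-\alpha^*)\ge C_3$ holds for all $n$ above some threshold $n_0 = n_0(\sigma,\alpha^*, C_3)$. Invoking both inequalities of Theorem~\ref{thm:bayes:chernoff} for $n\ge n_0$ yields
\begin{align*}
\frac{C_4}{\sqrt n\,\sigma\,\alpha^*(1-\alpha^*)}\, e^{-n\cds{\bar\varphi_0}{\bar\varphi_1}}\les \eta(\varphi_0^{(n)},\varphi_1^{(n)})\les \frac{C_2}{\sqrt n\,\sigma\,\alpha^*(1-\alpha^*)}\,e^{-n\cds{\bar\varphi_0}{\bar\varphi_1}},
\end{align*}
which is exactly $\Theta(n^{-1/2}e^{-n\cds{\bar\varphi_0}{\bar\varphi_1}})$. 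For the finitely many $n<n_0$, both $\eta(\varphi_0^{(n)},\varphi_1^{(n)})$ (being in $[0,1]$) and the target expression are bounded above and below by positive constants, so the $\Theta$ claim holds trivially for these small $n$ by adjusting constants.

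The only non-routine point is ensuring the third-moment hypothesis of Theorem~\ref{thm:bayes:chernoff}: this is the natural cost of the Berry--Esseen-type control hidden inside Theorem~\ref{thm:bayes:chernoff}, and is standardly included in the qualifier ``$\Theta$'' where all implicit constants depend on $\bar\varphi_0, \bar\varphi_1$ (through $\sigma,\tau,\alpha^*$) but not on $n$. No other step poses difficulty, as the corollary is essentially a bookkeeping translation of Theorem~\ref{thm:bayes:chernoff} from a single sum-of-independents setting to its i.i.d.\ specialization.
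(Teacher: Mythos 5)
Your proposal is correct and follows the same route as the paper's own proof: specialize Theorem~\ref{thm:bayes:chernoff} to the i.i.d.\ setting, observe that $\alpha^*$, $\bar\sigma_n$, and the third-moment ratio $\sum_j\E|Z_j|^3/(n\bar\sigma_n^2)$ are all $n$-independent constants, note that $\cds{\varphi_0^{(n)}}{\varphi_1^{(n)}}=n\cds{\bar\varphi_0}{\bar\varphi_1}$, and check that the lower-bound activation condition $\sqrt n\bar\sigma_n\alpha^*(1-\alpha^*)\ge C_3$ holds for large $n$. You are somewhat more explicit than the paper in spelling out the small-$n$ regime and in flagging that finiteness of $\E|Z|^3$ is an implicit assumption rather than a consequence of membership in $\mathcal F$, but these are refinements of the same argument rather than a different approach.
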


\begin{proof}
	Under the assumptions, $\E[|Z_j|^3]$ ($Z_j$ defined in \eqref{eq:decomp:log:l:Y}), $\bar\sigma_n$ and $\alpha^*$ are constants, so the assumption
	${\sum_{j=1}^n \E|Z_j|^3} = O(n \bar\sigma_n^2 )$ is satisfied. Moreover, we have $\cds{\varphi_0^{(n)}}{\varphi_1^{(n)}}=n\cds{\bar \varphi_0}{\bar \varphi_1}$ and $\f{1}{\sqrt n\bar \sigma_n \alpha^*(1-\alpha^*)}=\Theta\Big( \frac 1{\sqrt n} \Big)$. $\sqrt n \bar\sigma_n(1-\alpha^*)\alpha^*$ is sufficiently large as $n$ increases. Hence the conclusion is clear by Theorem \ref{thm:bayes:chernoff}.
\end{proof}

{\bf Comparison with existing results.} Chernoff type lower bound can trace back to early literatures.  \cite[Theorem 5]{shannon1967lower} produced the following non asymptotic lower bound for Bayes risk, namely
\begin{align}\label{eq:shannon}
R(\hat z,z)\ge \f 14 \min(e^{-\alpha^*\sqrt n\bar \sigma_n}, e^{ -(1-\alpha^*)\sqrt n \bar{\sigma}_n})e^{-\cds{\varphi_0}{\varphi_1}}.
\end{align}
Since $\min(e^{-\alpha^*\sqrt n\bar \sigma_n}, e^{ -(1-\alpha^*)\sqrt n \bar{\sigma}_n})\ll \f{1}{\sqrt n\bar \sigma_n \alpha^*(1-\alpha^*)}$, \eqref{eq:shannon} is strictly weaker than Theorem \ref{thm:bayes:chernoff}. Despite of its looseness, this lower bound is still actively used in quantum hypothesis testing \cite{dalai2013lower}. For the i.i.d. case, ${\cds{\varphi_0}{\varphi_1}}$ has been shown to be the best achievable exponent \cite[Theorem 11.9.1]{cover2006elements}. This result is restated in \cite[Theorem 2.1]{nussbaum2009chernoff} as follows:
\begin{align*}
\lim_{n\to\infty}\f 1n \log \eta(\varphi_0^{(n)}, \varphi_1^{(n)}) = -\cds{\bar \varphi_0}{\bar \varphi_1},
\end{align*}
under the same condition as Corollary~\ref{cor:bayes:chernoff}. This is obviously true since the exponent of $\eta(\bar \varphi_0, \bar \varphi_1)$ has the form $\log \eta(\varphi_0^{(n)}, \varphi_1^{(n)})=-n\cds{\bar \varphi_0}{\bar \varphi_1}-\f 12 \log n + O(1)$ as we discovered from Corollary \ref{cor:bayes:chernoff}. The $\log n$ term was investigated
in \cite{verdu1986asymptotic}, and applied to hypothesis testing problem in \cite[Lemma 11]{abbe2015community}. However, the result can only apply to Poisson distribution when the samples are i.i.d in a fixed asymptotic setting $\bar{\sigma}_n=\Theta\big(\f{\log n}n\big)$. If the samples are not identical, their lower bound is not valid. \cite{zhou2018optimal} generalized the result to other asymptotic setting; however, their bounds cannot apply to the case when observed data are not i.i.d Poisson distributed. Therefore, neither of them proposed a minimax lower bound that matches their algorithmic error rate in community detection problems.

%

\subsection{Application to Exponential Family}
With a concrete expressions of $\varphi_0$ and $\varphi_1$, we can write the Chernoff type bound in Theorem \ref{thm:bayes:chernoff} with a closed form up to the choice of $\alpha^*$ and a constant.
In this section, we are interested in the exponential family with PMF or PDF of the form
\begin{align}\label{eq:exp:fam}
\varphi(x;\theta) = h(x)\exp[\theta^\top T(x) - A(\theta)].
\end{align}
where $A$ is a smooth function defined on a convex set in certain Euclidean space. Let us we assume $\{\varphi_{zj}: z\in\{0,1\}, j\in [n]\}$ is contained in exponential family. To be specific, we assume there exist parameters $\theta_{zj}$, $z\in\{0,1\}, j\in[n]$ such that
\begin{align}\label{eq:pzj:exp}
\varphi_{zj}(x_j) =p(x;\theta_{zj})= h(x_j)\exp[\theta_{zj}^\top T(x_j)-A(\theta_{zj})]
\end{align}
We still define $\varphi_0$ and $\varphi_1$ as in \eqref{eq:def:prod:space} on some measure space such that $(\varphi_0, \varphi_1)\in \mathcal F (\Omega, \Sigma, \mu, n)$ (see \eqref{eq:density:family}).
Let us define $\theta_{\alpha j} := (1-\alpha)\theta_{0j} + \alpha\theta_{1j}$, then $\theta_{\alpha j}$ is a valid parameter since we assume the parameter space is convex. The Chernoff $\alpha$-divergence has a close form
\begin{align}
\cd{\varphi_{0j}}{\varphi_{1j}}
= (1-\alpha)A(\theta_{0j})+\alpha A( \theta_{1j})-A(\theta_{\alpha j}).\label{eq:chernoff:inf:exp:fam}
\end{align}
See Section \ref{sec:proof:chernoff:inf:exp:fam} for derivation. Suppose $Y_j\sim \varphi_{\alpha^* j}$, then using the definition of $Z_j$ in \eqref{eq:decomp:log:l:Y},
\begin{align}\label{eq:exp:fam:z:j}
Z_j = \log l_j(Y_j) - \E[\log l_j(Y_j)] = (\theta_{0j}-\theta_{1j})^\top (T(Y_j) - \nabla A(\theta_{\alpha j})).
\end{align}
We have $\Var[Z_j] = (\theta_{0j}-\theta_{1j})^\top {\bf H}(A(\theta_{\alpha^*j}))(\theta_{0j}-\theta_{1j})$ where ${\bf H}(A(\theta))$ is the Hessian matrix of $A$ evaluated at $\theta$. Now we can establish a corollary when $\varphi_{zj}$'s belong to exponential family.
\begin{corollary}[exponential family]\label{cor:exp:fam}
	Under the same assumptions in Theorem \ref{thm:bayes:chernoff}, assuming $\varphi_{zj}$'s have the form \eqref{eq:pzj:exp}, and let
	\begin{align*}
	\bar \sigma_n := \Big(\f 1n \sum_{j=1}^n \Var[Z_j] \Big)^{1/2}
	= \Big(\f 1n \sum_{j=1}^n (\theta_{0j}-\theta_{1j})^\top {\bf H}(A(\theta_{\alpha^*j}))(\theta_{0j}-\theta_{1j})\Big)^{1/2} .
	\end{align*}
	Suppose
	${\sum_{j=1}^n \E|Z_j|^3} \le C_1n \bar\sigma_n^2$, using the same constants $C_2, C_3$ and $C_4$ in Theorem \ref{thm:bayes:chernoff}, then
	\begin{align*}
	\eta(\varphi_0, \varphi_1) \le \Big( \f{C_2}{\sqrt n\bar \sigma_n (1-\alpha^*)\alpha^*} e^{-\summ j n[(1-\alpha^*)A(\theta_{0j})+\alpha^* A( \theta_{1j})-A(\theta_{\alpha^* j})]}\Big),
	\end{align*}
	If $\sqrt n\bar \sigma_n (1-\alpha^*)\alpha^*\ge C_3$, then we have
	\begin{align*}
	\eta(\varphi_0, \varphi_1) \ge \Big( \f{C_4}{\sqrt n\bar \sigma_n (1-\alpha^*)\alpha^*} e^{-\summ j n[(1-\alpha^*)A(\theta_{0j})+\alpha^* A( \theta_{1j})-A(\theta_{\alpha^* j})]}\Big).
	\end{align*}
\end{corollary}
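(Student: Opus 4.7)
The plan is to reduce Corollary \ref{cor:exp:fam} to a direct substitution into Theorem \ref{thm:bayes:chernoff}, using only two exponential family identities: the closed form of the Chernoff $\alpha$-divergence and the product factorization of the integrand.

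First, I would verify that the Chernoff divergence tensorizes. By the product structure in \eqref{eq:def:prod:space} and Fubini,
\begin{align*}
\cd{\varphi_0}{\varphi_1}
= -\log \int_\Omega \prod_{j=1}^n \varphi_{0j}^{1-\alpha}\varphi_{1j}^\alpha\, d\mu
= \sum_{j=1}^n \cd{\varphi_{0j}}{\varphi_{1j}}.
\end{align*}
Therefore the joint maximizer $\alpha^*$ of the left-hand side coincides with the maximizer of the (single) right-hand sum, and in particular $\cds{\varphi_0}{\varphi_1} = \sum_{j=1}^n \cds{\varphi_{0j}}{\varphi_{1j}}$ with a single $\alpha^*$ throughout.

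Next, I would invoke the exponential family form \eqref{eq:pzj:exp} to evaluate each $\cd{\varphi_{0j}}{\varphi_{1j}}$. A direct computation (the one referenced in Section~\ref{sec:proof:chernoff:inf:exp:fam}) shows $\varphi_{0j}^{1-\alpha}\varphi_{1j}^\alpha$ is proportional to the exponential family density at the convex combination $\theta_{\alpha j} = (1-\alpha)\theta_{0j}+\alpha\theta_{1j}$, because the base measure factor $h(x_j)$ and the sufficient statistic $T(x_j)$ combine cleanly while the log-partition contributes an explicit constant; this yields \eqref{eq:chernoff:inf:exp:fam}, namely
\begin{align*}
\cd{\varphi_{0j}}{\varphi_{1j}} = (1-\alpha)A(\theta_{0j}) + \alpha A(\theta_{1j}) - A(\theta_{\alpha j}).
\end{align*}
Combining this with the tensorization above gives
\begin{align*}
\cds{\varphi_0}{\varphi_1} = \sum_{j=1}^n \lp (1-\alpha^*)A(\theta_{0j}) + \alpha^* A(\theta_{1j}) - A(\theta_{\alpha^* j})\rp.
\end{align*}

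Finally, I would plug this expression, together with the exponential family form of $\bar\sigma_n$ already recorded in \eqref{eq:exp:fam:z:j} (which uses the standard identity that the Hessian of the log-partition function is the covariance of the sufficient statistic), directly into the upper and lower bounds of Theorem \ref{thm:bayes:chernoff}. The moment hypothesis $\sum_{j=1}^n \E|Z_j|^3 \le C_1 n \bar\sigma_n^2$ and the lower bound hypothesis $\sqrt n\bar\sigma_n(1-\alpha^*)\alpha^* \ge C_3$ are inherited verbatim. I do not anticipate a serious obstacle; the only subtlety worth articulating carefully is that the $\alpha^*$ appearing in the covariance expression for $\bar\sigma_n$ and the $\alpha^*$ appearing in the closed-form sum are genuinely the same number, which follows from the tensorization argument above and the uniqueness of the maximizer guaranteed by the assumption that $\varphi_0$ and $\varphi_1$ differ on a set of positive measure.
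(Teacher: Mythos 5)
Your proposal is correct and matches the route the paper implicitly follows: the paper gives no explicit proof of this corollary, but Section~2.2 together with Sections~5.2--5.3 supplies exactly the two ingredients you use — the closed form \eqref{eq:chernoff:inf:exp:fam} of the per-coordinate $\alpha$-divergence and the variance formula via the Hessian of the log-partition — and the corollary is then a direct substitution into Theorem~\ref{thm:bayes:chernoff}. One small notational caution: the line ``$\cds{\varphi_0}{\varphi_1} = \sum_{j} \cds{\varphi_{0j}}{\varphi_{1j}}$ with a single $\alpha^*$ throughout'' is, read literally, false, since $\cds{\cdot}{\cdot}$ in the paper means the per-pair maximized divergence, and a sum of per-coordinate maxima generally exceeds the max of the sum. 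You clearly know this (hence the hedge), but the clean way to say it is that $\cds{\varphi_0}{\varphi_1} = \sum_j D_{\alpha^*}(\varphi_{0j}\|\varphi_{1j})$ where $\alpha^*$ is the joint maximizer; each summand is then $(1-\alpha^*)A(\theta_{0j})+\alpha^* A(\theta_{1j})-A(\theta_{\alpha^* j})$ by \eqref{eq:chernoff:inf:exp:fam}, and the lower/upper bounds of Theorem~\ref{thm:bayes:chernoff} give the two displayed inequalities verbatim.
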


Most of the time, we are interested in the problem that if the testing rule or clustering algorithm achieves the optimal bound, and we want the bound to be as precise as possible. Under the exponential family assumption, the upper and lower bound in this corollary has a closed form up to the choice of $\alpha^*$. The value of $\alpha^*$ is not important in theoretical analysis because it is usually relatively stable as $n$ increases under some regularization conditions. In many special cases, $\alpha^*=1/2$. See the simulation section  (Section~\ref{sec:simulation}) for examples. Now we will show the advantages of these bounds in concrete examples and the community detection problem.



\subsection{An Example of Bernoulli Distribution}\label{sec:example:bern}
We are going to investigate a special case in exponential family.  Let $p_{zj}\in (0,1)$ and $\theta_{zj} =  \log \big( \f{p_{zj}}{1-p_{zj}} \big)$ for $z\in \{0,1\}, j\in[n]$, and define PMFs of $\text{Bern}(p_{zj})$:
\begin{align*}
\varphi_{zj}(x) := \varphi(x, \theta_{zj}) :=
\begin{cases}
p_{zj}, &\text{if } x = 1,\\
1-p_{zj}, &\text{if } x = 0,\\
0, &\text{otherwise.}
\end{cases}
\end{align*}
It coincides with \eqref{eq:pzj:exp} if we let $A(\theta) = \log(1+e^\theta)$, $T(x) = x$ and $h(x)=1_{\{0,1\}}(x)$, i.e. $
\varphi_{zj}(x)
=h(x)\exp[\theta_{zj}^\top T(x)-A(\theta_{zj})]$.
Let us briefly recall the testing problem in Section \ref{sec:general:cases}. We randomly draw a number $z\in\{0,1\}$ with equal probability $1/2$, and draw a random sample $X = \{X_1,\dots,X_n\}$ where $X_j\sim \text{Bern}(p_{zj})$ independently. As usual, we want to recover $z$ given $X=x\in \{0,1\}^n$. Then we have
\begin{align}\label{eq:chernoff:bern}
e^{-\cd{\varphi_{0}}{\varphi_{1}}}=\prod_{j=1}^n[{p_{0j}^{1-\alpha} p_{1j}^\alpha + (1-p_{0j})^{1-\alpha} (1-p_{1j})^\alpha}].
\end{align}
Let $p_{\alpha j} = \f{p_{0j}^{1-\alpha} p_{1j}^\alpha}{p_{0j}^{1-\alpha} p_{1j}^\alpha + (1-p_{0j})^{1-\alpha} (1-p_{1j})^\alpha}$ and recall the definition of $\alpha^*$ and $\bar \sigma_n$ from Theorem \ref{thm:bayes:chernoff}, then we have the identity
\begin{align}\label{eq:variance:z}
n\bar{\sigma}_n^2 = \sum_{j=1}^n \Big[ \log \f{p_{0j}(1-p_{1j})}{p_{1j}(1-p_{0j})}  \Big]^2 p_{\alpha^* j}(1-p_{\alpha^* j}).
\end{align}
Now let us apply Theorem \ref{thm:bayes:chernoff}. Suppose $\max_{j\in[n]} \Big| \log \f{p_{0j}(1-p_{1j})}{p_{1j}(1-p_{0j})} \Big| \le C_1$, then there exists constants $C_2, C_3$ and $C_4$ which only depend on $C_1$, such that if $\sqrt{n}\bar\sigma_n\alpha^*(1-\alpha^*)\ge C_2$, then the upper and lower bound of $\eta(\varphi_0,\varphi_1)=2R(\hat z, z)$ in the theorem holds. \par
Finally, it is worth mentioning a special case when $p_{z1}=\dots = p_{zn}:=\bar p_{z}$ for $z\in\{0,1\}$. Let $\psi_z(x)$ be the PMF of $\text{Bin}(n, \bar p_{z})$. Then one can check that
\begin{align}\label{eq:bern:bin:equiv}
\eta(\psi_0, \psi_1) = \eta(\varphi_0, \varphi_1).
\end{align}
This is due to the fact that, given the observed data $x$, the optimal test only relies on the sufficient statistics $\sum_{j=1}^n x_j$. This observation can generalize Corollary \ref{cor:bayes:chernoff} to the cases when only the sufficient statistics, which is a summand of i.i.d. random variables, are observed. For example, one can apply Corollary \ref{cor:bayes:chernoff} to Bayes error probability of Poisson parameter testing as the Poisson variable is the sum of arbitrarily many i.i.d Poisson variables.\par
Calculations details in this section will appear in Section \ref{proof:eq:chernoff:bern} to  \ref{proof:bern:bin:equiv}.

\section{Application to Community Detection in SBM}\label{sec:community}

The results in previous section can apply to almost all clustering and classification problem in statistics. A typical example is community detection in stochastic block model. Given some good estimates of the parameters, community detection is indeed a classification problem. Hence the clustering error rate of the label estimates heavily depends on the Bayes error probability.

\subsection{Background of SBM}\label{sec:background:sbm}

We will be working on a network which can be represented by a symmetric adjacency matrix $A\in\{0,1\}^{n\times n}$, where the nodes are indexed by $[n]$. We assume that there are $\Kr$ communities on $[n]$, and the membership of the nodes are given by $z\in[\Kr]^\nr$. Thus $z_i=k$ if node $i\in[n]$  belongs to community $k\in[\Kr]$. We let $n_k:=|\{i: z_i = k\}|$ be the size of $k$th community. Under a stochastic block model (SBM), given a symmetric probability matrix $P\in[0,1]^{\Kr\times\Kr}$,
\begin{align}\label{eq:def:adjacency:matrix}
A_{ij} = A_{ji} \sim \text{Bern}(P_{z_iz_j}) \text{ for all } i>j \text{ independently},
\end{align}
and $A_{ii}=0$ for all $i\in[n]$. That is, the connectivity of nodes only depends on their memberships, and there are no self-loops. A fundamental task of community detection on SBM is to recover $z$ given $A$ and $K$. To have notation consistent with the previous section, we define
\begin{align}\label{eq:parameter:vector}
p_{kj}:=P_{kz_j}  \quad \text{for } k\in[K],  j\in[n].
\end{align}
In other words, if $z_i=k$, $\E[A_{ij}]=p_{kj}$ whenever $j\ne i$. Thus, the vectors $p_{k*}$ and $\E[A_{i*}]$ are the same at all entries but the $i$th one.  When $n$ is large, the effect of one entry is up to a constant. Here $p_{k*} = (p_{k1}, \dots, p_{kn})$, and similarly $A_{i*}$ is the $i$th row of $A$. This notation will be used in the rest of this paper. We will consider the parameters satisfies
\begin{align}\label{eq:abs:constants}
n_k\in\big[ \f{n}{\beta K},\f{\beta n}K \big], \quad
\max_{k,\ell}P_{k\ell} := p^*\le 1-\eps, \quad
\text{and}\quad
\f{\max_{k,\ell}P_{k\ell}}{\min_{k',\ell'}P_{k'\ell'}}\le \omega
\end{align}
for some fixed constants $\beta>1, \eps\in(0,1)$, and $\omega>1$.  We will assume $K\le K^*$ where $K^*$ is also a fixed constant, i.e., $K = O(1)$. $\beta$ controls the balance between different communities. There are no too small or too large communities. All connectivity probabilities are bounded above by $1-\eps$, which is a mild sparsity assumption.
For estimate $\hat z$ of $z$, we are interested in the error rate defined as
\begin{align}
\mis(\hat z, z) = \min_{\pi\in S_K} \f 1n \summ i n 1\{\pi(\hat z) \ne z\}
\end{align}
where $S_K$ is the symmetric group contains all permutations of $[K]$ and the permutation $\pi$ will apply entry-wisely on $\hat z$.

\subsection{Fundamental Limit}

Let us first consider a simplified symmetric hypothesis testing problem in the SBM. In the community detection problem describe in the previous section, only the adjacency matrix $A$ and number of community $K$ is given. Now suppose additionally, we know $z_{-i}$, i.e., the all labels but the $i$th one, and the connectivity matrix $P$, our goal is to recover $z_i$. To further simplify the problem, we assume $z_i\in\{k,\ell\}$, then the hypothesis problem becomes comparison between the parameters $p_{k*}$ and $p_{\ell*}$ defined in \eqref{eq:parameter:vector}. 
Since the distribution of Bernoulli vector $A_{i*}$ can be characterized by $p_{k*}$ if $z_i=k$, we will write
\begin{align}\label{eq:short:cd}
\begin{split}
\cd{p_{k*}}{p_{\ell*}} &:=D_\alpha\Big({\bigotimes_{j=1}^n\text{Bern}(p_{kj})}\|{\bigotimes_{j=1}^n\text{Bern}(p_{\ell j})}\Big);\\
\eta(p_{k*}, p_{\ell*})&:=
\eta\Big({\bigotimes_{j=1}^n\text{Bern}(p_{kj})}, {\bigotimes_{j=1}^n\text{Bern}(p_{\ell j})}\Big).
\end{split}
\end{align}
which also denote the same quantities if we input the corresponding PMF's. Substituting $p_{0*}$ and $p_{1*}$ with $p_{k*}$ and $p_{\ell *}$ in Section \ref{sec:example:bern}, and using the assumptions about SBM in Section \ref{sec:background:sbm}, we have the following lemma using the previous result.

\begin{lemma}\label{lem:fund:limit}
	Given adjacency matrix $A$ and parameters $K$, $z_{-i}$ and $P$, and knowing that $z_i=k$ or $\ell$ with probability $1/2$, then Bayes estimator $\hat z_i$
	\begin{align*}
	\hat z_i:=\arg\max_{k\in[K]} \sum_{j\ne i} A_{ij}\log p_{kj} + (1-A_{ij})\log (1-p_{kj})
	\end{align*}
	satisfies $\P(\hat z\ne z) = \f 12 \eta(p_{k*}, p_{\ell*})$. Assuming \eqref{eq:abs:constants}, there  exist constants $C_1$ and $C_2$ only depends on $\beta, \eps, K^*$ and $\omega$ such that, if $C_1\le np^*\le C_2 (D^*)^2$,
	\begin{align*}
	\P(\hat z\ne z)=\Theta\Big( \Big( \sqrt{np^*}\max_{j\in[n]}\Big|\log\f{p_{kj}(1-p_{\ell j})}{p_{\ell j}(1-p_{kj})} \Big| \Big)^{-1}\exp(-\cds{p_{k*}}{p_{\ell*}})\Big).
	\end{align*}
\end{lemma}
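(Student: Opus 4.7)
The plan is as follows. The first assertion is Neyman--Pearson: given $z_{-i}$, $P$, and the uniform prior on $\{k,\ell\}$, the rule $\hat z_i$ is the MAP estimator, so by \eqref{eq:tv:affinity} its Bayes error equals $\tfrac12$ times the total-variation affinity between the distributions of $A_{i\ast}$ under $z_i=k$ and $z_i=\ell$. Since these are products of independent Bernoullis with parameters $p_{k\ast}$ and $p_{\ell\ast}$ respectively, the affinity is precisely $\eta(p_{k\ast},p_{\ell\ast})$ as defined in \eqref{eq:short:cd}.

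The $\Theta$-bound then follows from the Bernoulli specialization of Theorem \ref{thm:bayes:chernoff} in Section \ref{sec:example:bern}, applied with $(\varphi_{0j},\varphi_{1j})=(\text{Bern}(p_{kj}),\text{Bern}(p_{\ell j}))$. The SBM assumptions \eqref{eq:abs:constants} give
$$M \;:=\; \max_{j\in[n]} \Big|\log \frac{p_{kj}(1-p_{\ell j})}{p_{\ell j}(1-p_{kj})}\Big| \;\le\; \log \omega + \log(1/\eps),$$
a constant depending only on $\omega,\eps$. Since the $Z_j$ from \eqref{eq:decomp:log:l:Y} satisfy $|Z_j|\le M$ almost surely, the third-moment bound $\sum_j \E|Z_j|^3 \le M\cdot n\bar\sigma_n^2$ holds, verifying the regularity hypothesis of Theorem \ref{thm:bayes:chernoff}.

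The next step is to convert $\sqrt n\,\bar\sigma_n\,\alpha^*(1-\alpha^*)$ into $\sqrt{np^*}\,M$. From \eqref{eq:variance:z}, $n\bar\sigma_n^2 = \sum_{j} r_j^2\,p_{\alpha^* j}(1-p_{\alpha^* j})$, where $r_j$ denotes the $j$th log-ratio. The weighted geometric-mean formula for $p_{\alpha^* j}$ places it between $p_{kj}$ and $p_{\ell j}$ up to a factor depending on $\omega,\eps$, so $p_{\alpha^* j}(1-p_{\alpha^* j})\asymp p^*$. Crucially, each $r_j$ depends on $j$ only through $z_j\in[K]$, so it takes at most $K$ distinct values; combined with the balance $n_m\asymp n/K$ from \eqref{eq:abs:constants}, this gives $\sum_{j} r_j^2 \asymp n\,M^2$. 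Hence $n\bar\sigma_n^2\asymp np^*M^2$.

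The remaining and main technical point is to show that $\alpha^*(1-\alpha^*)\ge c_0>0$ for some $c_0$ depending only on $\omega,\eps,K,\beta$. Since $\alpha\mapsto\cd{p_{k\ast}}{p_{\ell\ast}}$ is concave on $[0,1]$, vanishes at the endpoints, and has boundary slopes $\kld{p_{k\ast}}{p_{\ell\ast}}$ and $-\kld{p_{\ell\ast}}{p_{k\ast}}$, a bounded ratio between these two KL divergences forces $\alpha^*$ into a compact subinterval of $(0,1)$; under \eqref{eq:abs:constants} each per-coordinate KL is comparable to $p^*M^2$ up to constants, which supplies the bounded ratio. Putting everything together, $\sqrt n\,\bar\sigma_n\,\alpha^*(1-\alpha^*)\asymp \sqrt{np^*}\,M$, and since the same computation yields $D^*\asymp np^*M^2$, the hypothesis $C_1\le np^*\le C_2(D^*)^2$ is equivalent (up to constants) to $\sqrt{np^*}\,M\ge C_3$, which is exactly the regime in which both halves of Theorem \ref{thm:bayes:chernoff} apply. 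Invoking that theorem and using $\P(\hat z_i\ne z_i)=\tfrac12\eta(p_{k\ast},p_{\ell\ast})$ yields the claimed two-sided estimate.
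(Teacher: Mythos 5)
Your first paragraph (Neyman--Pearson/MAP) and the verification of the third-moment condition $\sum_j\E|Z_j|^3\le M\cdot n\bar\sigma_n^2$ with $M\le\log(\omega/\eps)$ match the paper's proof. Your conversion $n\bar\sigma_n^2\asymp np^*M^2$ is also essentially the same inequality chain the paper writes out (the bounds $p_{\alpha j}(1-p_{\alpha j})\asymp p^*$ under \eqref{eq:abs:constants} indeed hold for every $\alpha\in[0,1]$, not only $\alpha^*$, so there is no circularity there). Where you take a slightly different route is in passing through $D^*\asymp np^*M^2$, which relies on the log-ratios $r_j$ taking only $K=O(1)$ distinct values and the community balance; the paper instead proves Lemma~\ref{lem:frac:lower:bound} by isolating a single index $j$ carrying at least a $1/n$ fraction of $D^*$, a more robust inequality that does not use the block structure. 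Both routes arrive at the same conclusion, and either is acceptable here.

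The genuine gap is in your argument that $\alpha^*(1-\alpha^*)\ge c_0$. You argue that because $\alpha\mapsto\cd{p_{k*}}{p_{\ell*}}$ is concave, vanishes at the endpoints, and has boundary slopes $\kld{p_{k*}}{p_{\ell*}}$ and $-\kld{p_{\ell*}}{p_{k*}}$, a bounded ratio of these two slopes forces the maximizer into a compact subinterval of $(0,1)$. This inference does not follow from concavity plus boundary slopes alone: one can easily build a concave function on $[0,1]$ vanishing at the endpoints with $|h'(0)|\asymp|h'(1)|$ and yet with argmax arbitrarily close to $0$, simply by letting $h'$ collapse from $h'(0)$ to $0$ over a very short initial interval and then decaying slowly. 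In the present (Bernoulli-product) setting the conclusion \emph{is} true, but the reason lives in the per-coordinate structure, not in the two boundary slopes. The paper's Lemma~\ref{lem:bounded:alpha} proves it by reducing to the $n=1$ Bernoulli case: the stationarity condition is solved explicitly to show $\alpha^*=\bigl(g(y)-g(x)\bigr)/(y-x)$ with $g(z)=\log\frac{e^z-1}{z}$ and $x=\log(q/p)$, $y=\log\frac{1-q}{1-p}$ ranging over a compact set under \eqref{eq:abs:constants}; hence $\alpha^*=g'(\xi)$ for some $\xi$ in that compact set, which is bounded in $(0,1)$. For $n>1$ it then observes that each per-coordinate factor $f_j(\alpha)$ is a positive convex function decreasing on $[0,\delta]$ and increasing on $[1-\delta,1]$, so the product $\prod_j f_j$ inherits this monotonicity and its minimizer lies in $[\delta,1-\delta]$. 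You should replace your boundary-slope heuristic with an argument of this type (or invoke Lemma~\ref{lem:bounded:alpha} directly). A secondary, more cosmetic issue: the hypothesis $C_1\le np^*\le C_2(D^*)^2$ is not ``equivalent'' to $\sqrt{np^*}M\ge C_3$; what you actually use (and what Lemma~\ref{lem:frac:lower:bound} delivers) is the one-directional implication that, after choosing $C_2$ small enough, $np^*\le C_2(D^*)^2$ forces $\sqrt n\bar\sigma_n\alpha^*(1-\alpha^*)$ above any prescribed constant, which is the regime where the lower bound of Theorem~\ref{thm:bayes:chernoff} kicks in.
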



Now we will derive a minimax lower bound of community detection problem. 
We will consider the following parameter space:
\begin{align}\label{eq:par:space:strict}
\begin{split}
{\mathcal S}(n,p^*, K, &D^*) := \Big\{(z,P): z\in[K]^n, P\in(0,1)^{K\times K}, P=P^\top,\\ &\eqref{eq:abs:constants} \text{ is satisfied, } \infnorm{P} = p^*, \min_{k\ne\ell} \max_{r\in[K]} \frac{P_{kr}}{P_{\ell r}} \vee \frac{P_{\ell r}}{P_{k r}}\ge \omega',\\ 
&p_{kj}=P_{kz_j} , \min_{k\ne \ell} \cds{p_{k*}}{ p_{\ell*}}\ge D^*\Big\}.
\end{split}
\end{align}
Here, we introduce a new constant $\omega'>1$ to provide a lower bound of $\max_{j\in[n]} \Big| \log \f{p_{kj}(1-p_{\ell j})}{p_{\ell j}(1-p_{kj})}  \Big|$. This is a mild assumption since as long as $P_{k*}\ne P_{\ell*}$ for all $k,\ell\in[K]$, then $\omega'>1$ exists.

\begin{theorem}[minimax lower bound]\label{thm:minimax:lower:bound}
	Let $\mathcal S = \mathcal S(n, p^*, K, D^*)$ in \eqref{eq:par:space:strict}. Suppose $3\le K\le K^*$, $np^*$ sufficiently large, and $D^*$ satisfies there exists $q$ satisfies $0<q<p^*\le 1-\eps$ and  $\omega'\le p^*/q\le \omega$ such that $D^*= \cds{\text{Bin}(\lfloor n/K \rfloor, p^*)}{\text{Bin}(\lfloor n/K \rfloor, q)}$, we have
	\begin{align*}
	\inf_{\hat z} \sup_{{\mathcal S}} \E[\mis(\hat z,z)] = \Omega\Big( \frac 1{\sqrt{np^*}} e^{-D^*} \Big),
	\end{align*}
	where the big $\Omega$ notation only involves constant depends on $\beta, \eps, K^*, \omega$, and $\omega'$.
\end{theorem}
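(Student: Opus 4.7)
The plan is to reduce the minimax problem to a structured hypothesis-testing problem via Assouad's lemma, feeding in the sharp single-pair Bayes error from Lemma~\ref{lem:fund:limit}. The assumption $K \geq 3$ is used crucially to build a probability matrix in which two distinct community rows differ at only $\lfloor n/K \rfloor$ coordinates, so that the pairwise Chernoff information realises exactly $D^*$ rather than, say, $2D^*$.

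\emph{Hard instance.} Let $q$ be the value produced by the hypothesis, so that $D^* = \cds{\text{Bin}(\lfloor n/K\rfloor, p^*)}{\text{Bin}(\lfloor n/K\rfloor, q)}$ and $p^*/q \in [\omega', \omega]$. Pick three distinguished labels $1, 2, 3$ and set
\[ P^0_{rs} = \begin{cases} q & \text{if } \{r,s\} = \{1,3\}, \\ p^* & \text{otherwise.} \end{cases} \]
Choose $z^0$ balanced, with $n_k \in \{\lfloor n/K\rfloor, \lceil n/K\rceil\}$. With $p_{kj}=P^0_{k z^0_j}$ as in \eqref{eq:parameter:vector}, the rows $p_{1*}$ and $p_{3*}$ coincide except at the $n_2$ coordinates indexed by community $2$, where they take values $q$ and $p^*$; combining this with \eqref{eq:bern:bin:equiv} and the product decomposition of Chernoff information, one finds $\cds{p_{1*}}{p_{3*}} = D^*$, and the same computation gives $D^*$ for the pairs $(1,2)$ and $(2,3)$. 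The remaining constraints of \eqref{eq:par:space:strict} are checked directly: \eqref{eq:abs:constants} holds for $\beta$ sufficiently large, $\infnorm{P^0} = p^*$, and $\max_r \{P^0_{kr}/P^0_{\ell r} \vee P^0_{\ell r}/P^0_{kr}\} = p^*/q \geq \omega'$ for each pair $(k,\ell)$ among $\{1,2,3\}$. Hence $(z^0, P^0) \in \mathcal S$.

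\emph{Assouad family.} Fix $S \subset \{i : z^0_i = 1\}$ of size $m = \lfloor (1 - 1/\beta) n/K \rfloor$ and enumerate $S = \{\iota_1, \ldots, \iota_m\}$. For each $\tau \in \{0,1\}^m$ define $z^\tau$ by flipping $z^0_{\iota_i}$ from $1$ to $3$ whenever $\tau_i = 1$; balance is preserved, so $(z^\tau, P^0) \in \mathcal S$ for every $\tau$. For $\tau, \tau'$ differing only in coordinate $i$, the two joint laws of $A$ agree on every edge not incident to $\iota_i$, and the edges $\{A_{\iota_i, j} : j \ne \iota_i\}$ are independent Bernoullis whose parameters are $p_{1j}$ under one hypothesis and $p_{3j}$ under the other; since $p_{1,\iota_i} = p_{3,\iota_i} = p^*$, that extra coordinate carries no information and we obtain the exact identity
\[ 1 - \dtv(\P_{z^\tau}, \P_{z^{\tau'}}) \;=\; \eta(p_{1*}, p_{3*}). \]
Lemma~\ref{lem:fund:limit} with $(k,\ell) = (1,3)$ now applies thanks to $3 \leq K \leq K^*$, $np^*$ large, and \eqref{eq:abs:constants}, delivering $\eta(p_{1*}, p_{3*}) = \Theta(e^{-D^*}/\sqrt{np^*})$.

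\emph{Conclusion via Assouad.} Since the flipped nodes lie at distinct positions, the Hamming distance in $\tau$-space equals the Hamming distance on the corresponding nodes in $z$-space. Assouad's lemma (e.g.\ Tsybakov 2009, Theorem~2.12) gives, for any induced estimator $\hat\tau$,
\[ \sup_{\tau \in \{0,1\}^m} \E_{z^\tau}[H(\hat\tau, \tau)] \;\geq\; \frac{m}{2}\, \eta(p_{1*}, p_{3*}) \;=\; \Omega\!\left( \frac{n}{K} \cdot \frac{e^{-D^*}}{\sqrt{np^*}} \right), \]
and dividing by $n$ yields $\sup_\tau \E[\mis(\hat z, z^\tau)] = \Omega(e^{-D^*}/\sqrt{np^*})$. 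The main technical hurdle is the permutation minimisation $\min_{\pi \in S_K}$ in the definition of $\mis$: an adversarial estimator might try to relabel communities $1$ and $3$ to defeat the family. I would handle this by noting that any non-identity permutation $\pi$ mismatches a $\Omega(1/K)$ fraction of the $n - m = \Theta(n)$ nodes outside $S$ (whose labels equal $z^0$ and are unaffected by $\tau$), contributing $\Omega(1/K)$ to $\mis$; since $1/K \gg e^{-D^*}/\sqrt{np^*}$ in the regime considered, the identity permutation is optimal, and the Hamming Assouad bound carries over to $\mis$ unchanged.
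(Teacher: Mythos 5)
Your overall strategy---pin down a hard sub-model where one critical pair of community profiles achieves Chernoff information exactly $D^*$, invoke Lemma~\ref{lem:fund:limit} for the single-node Bayes error, and sum via an Assouad-type argument---is sound and is in fact close in spirit to the paper's own reduction (the paper fixes $z_I$, restricts $z_{I^c}\in\{1,2\}^{|I^c|}$, lower-bounds minimax risk by Bayes risk with uniform prior, and applies Lemma~\ref{lem:fund:limit}). But your concrete $P^0$ does not realise the structure you need, and the error is fatal to the rate.

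With $P^0_{rs}=q$ when $\{r,s\}=\{1,3\}$ and $p^*$ otherwise, the profile vectors $p_{1*}$ and $p_{3*}$ do \emph{not} differ only on community $2$: on community $2$ both equal $P^0_{12}=P^0_{32}=p^*$, while they differ on community $1$ (values $p^*$ vs.\ $q$) and on community $3$ (values $q$ vs.\ $p^*$). Writing $f(\alpha)=D_\alpha(\text{Bern}(p^*)\|\text{Bern}(q))$, one gets $\cds{p_{1*}}{p_{3*}}=\sup_\alpha[n_1 f(\alpha)+n_3 f(1-\alpha)]$, which for balanced communities is of order $2D^*$ and in any case bounded strictly above $D^*$. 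Your Assouad step would therefore only yield $\Omega\big(e^{-2D^*}/\sqrt{np^*}\big)$, much weaker than the theorem. The fix is to flip between communities $1$ and $2$ (or $2$ and $3$): with your $P^0$, $p_{1*}$ and $p_{2*}$ differ only on community $3$ (sizes $\approx n/K$), so $\cds{p_{1*}}{p_{2*}}=D^*$ as required, and the flipped nodes do not alter the size of the distinguishing community $3$. A second issue: for $K\ge 4$ your $P^0$ makes the rows $P^0_{k*}$ identical for all $k\notin\{1,3\}$, hence $p_{k*}=p_{\ell*}$ for such pairs, violating both $\cds{p_{k*}}{p_{\ell*}}\ge D^*$ and the ratio constraint in \eqref{eq:par:space:strict}; compare the paper's $P$, which places $p^*$ on diagonal entries $P_{kk}$ for $k>3$ precisely to keep all rows distinguishable. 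Finally, your handling of the $\min_{\pi\in S_K}$ relabeling is on the right track but stated loosely: the relevant fact is that if $\mis(\hat z, z^\tau)$ is below some fixed $\Theta(1/K)$ threshold then the optimal $\pi$ is unique and common to all $\tau$, which is what the paper argues by case analysis when restricting to estimators $\hat z\in Z_I$.
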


The proof will appear in Section~\ref{proof:thm:minimax:lower:bound}, which is inspired by several previous works \cite{zhang2016minimax, gao2018community}. The existing minimax lower bound $\exp(-(1+o(1)) D^*))$ is provided under the assortative assumptions (i.e., ``$p$ vs $q$", see introduction for details).   The existence of $q$ in the theorem help reduce to the assortative cases; however, additional proving technique is needed for such more general setting and removing the term $o(1)D^*$. Our proof cannot extend to the case $K=2$. We will leave this as future work.

\subsection{Algorithm Achieving the Minimax Lower Bound}

Our algorithm is inspired by the pseudo-likelihood approach in \cite{amini2013pseudo}.
We define an operator to estimate $P$ according to adjacency matrix $A$ and estimated labels $\tilde z$:
\begin{align}\label{eq:operator:B}
\mathcal B(A,\tilde z):= (\hat P_{k\ell})\in [0,1]^{K\times K}, \quad
\hat P_{k\ell} := \f{\sum_{i>j} A_{ij} 1\{\tilde z_i = k, \tilde z_j = \ell\}}{\sum_{i>j} 1\{\tilde z_i = k, \tilde z_j = \ell\}}.
\end{align}
We will also use likelihood ratio classifier defined as follows:
\begin{align}\label{eq:operator:L}
\begin{split}
\mathcal L(A, \hat P, \tilde z):= (\hat z_i)\in [K]^n, \quad
\hat z_i = \arg \max_{k\in[K]} \sum_{j\ne i} A_{ij}\log \hat P_{k\hat z_j} + (1-A_{ij})\log (1-\hat P_{k \hat z_j}).
\end{split}
\end{align}
Note that we can apply these two operators on submatrice of $A$ with the corresponding indices if needed. One can observe that this is an EM-type algorithm if we repeat \eqref{eq:operator:B} and \eqref{eq:operator:L} iteratively, i.e., \eqref{eq:operator:B} is the expectation step and \eqref{eq:operator:L} is the maximization step. As pointed out in \cite{zhou2018optimal}, it requires at least two iterations of EM-type update to achieve the optimal error rate up to a constant. To generate enough independence between iterations, we combine the block partition method in \cite{chin2015stochastic} and ``leave-one-out" trick in \cite{gao2017achieving}. It is worth noting that besides the dependence between $A$ and $\tilde z$ in $\mathcal L(A,\hat P, \tilde z)$, other dependence can be handled by uniform bounds. Details about Algorithm~\ref{alg:lrc} will be describe as follows:

Step~\ref{step:1} to~\ref{step:2}: We apply spectral clustering on the whole adjacency matrix. However, we will only use its output in the matching step and approximate an initial estimate $\tilde P$ of $P$. The dependence between $\tilde P$ and $A$ can be handled by uniform bounds.\par
Step~\ref{step:3}: This is the block partitioning trick. Data in different blocks will be used in different steps to acquire independence. \par
Step~\ref{step:4} to~\ref{step:5}: This is the ``leave-one-out" trick. In each iteration, we only use the data of the $j$th node in step~\ref{step:10}, so the last likelihood ratio classifier will be independent with other steps in the for loop.\par
Step~\ref{step:6} to~\ref{step:7}: We apply spectral clustering on two of the subblocks. The labels will be consistent after we match them with $\tilde z$. Note that although $\tilde z$ depends on $A$, $\tilde z'_{I'}$ and $\tilde z'_{J'}$ only depend on the corresponding subblocks as long as the spectral clustering algorithm outputs good enough labels.\par
Step~\ref{step:8}: We apply the first likelihood ratio classifier on a different subblock from ones used in step~\ref{step:6}.\par
Step~\ref{step:9}: We obtain new estimate $\hat P$ of $P$ according to updated labels in step~\ref{step:8}.\par
Step~\ref{step:10}: We update the label again according to the new $\hat P$ and $\tilde z'$ obtained in step~\ref{step:9}. \par
Step~\ref{step:12} to~\ref{step:16}: A spectral clustering algorithm proposed in \cite{zhou2018spectral}. We refer readers to see the regularization step in the original paper. \par
Step~\ref{step:17} to~\ref{step:20}: A matching algorithm finding the optimal permutation between labels. Its a linear assignment problem with computational complexity $O(K^3)$ \cite{jonker1987shortest}.\par


\begin{algorithm}[t]
	\caption{Community detection}
	\begin{algorithmic}[1]
		\State\textbf{Input:} Adjacency matrix $A$, number of communities $K$.
		\State\textbf{Output:} Estimated labels $\hat z$.
		\State $\tilde z\gets \SC(A,K)$.
		\label{step:1}
		\State $\tilde P\gets\mathcal B(A,\tilde z)$.
		\label{step:2}
		\State Let $I\subset[n]$ with $|I|=\lfloor n/2\rfloor$ be a random subset of indices. Let $J=[n]\backslash I$.
		\label{step:3}
		\For{$j=1$ to $n$}
		\label{step:4}
		\State $I'\gets I\backslash\{j\}$, $J'\gets J\backslash \{j\}$.
		\label{step:5}
		\State $\tilde z_{I'}' \gets \SC(A_{I'\times I'},K)$, $\tilde z_{J'}' \gets \SC(A_{J'\times J'},K)$.
		\label{step:6}
		\State $\tilde z'_{I'} \gets \match(\tilde z_{I'}, \tilde z'_{I'})$, $\tilde z'_{J'}\gets\match(\tilde z_{J'}, \tilde z'_{J'})$.
		\label{step:7}
		\State $\tilde z'_{I'} \gets \mathcal L(A_{I'\times J'}, \tilde P, \tilde z'_{J'})$, $\tilde z'_{J'} \gets \mathcal L(A_{J'\times I'}, \tilde P, \tilde z'_{I'})$.
		\label{step:8}
		\State $\tilde z' \gets (\tilde z_i, \tilde z'_{I'}, \tilde z'_{J'})$, $\hat P\gets\mathcal B(A, \tilde z')$.
		\label{step:9}
		\State $\hat z_j \gets \mathcal L(A_{j*},\hat P, \tilde z')$.
		\label{step:10}
		\EndFor
		\Function{SC}{$A, K$}
		\label{step:12}
		\State Apply degree-truncation to $A$ to obtain $\Are$.
		\State Apply SVD on $\Are$ so that $\Are=U\Sigma U^T$. Let $\hat \Sigma$ contains top $K$ singular values on the diagonal and $\hat U$ contains corresponding singular vectors.
		\State Output the K-means clustering result on the rows of $\hat U\hat\Sigma$.
		\EndFunction
		\label{step:16}
		\Function{\match}{$\tilde z, z$}
		\label{step:17}
		\State $\tilde z \gets \arg\min_{\pi(\tilde z):\pi\in S_K} \sum_{i=1}^n 1\{z_i\ne \pi(z_i)\}$.
		\State Output $\tilde z$.
		\EndFunction
		\label{step:20}
	\end{algorithmic}
	\label{alg:lrc}
\end{algorithm}

The following block matrix might help understand partitioning of adjacency matrix $A$ in the algorithm.
\renewcommand{\arraystretch}{1.2}
\begin{align*}
A=
\left[
\begin{array}{ c | c c | c c }
0 &  \multicolumn{4}{c}{A_{i\times(I'\cup J')}} \\
\cline{1-5}
\multirow{4}{*}{\vdots}  & & &  &  \\
& \multicolumn{2}{c|}{\raisebox{.6\normalbaselineskip}[0pt][0pt]{$A_{I'\times I'}$}} & \multicolumn{2}{c}{\raisebox{.6\normalbaselineskip}[0pt][0pt]{$A_{I'\times J'}$}}\\
\cline{2-5}
&  &  & & \multicolumn{1}{c}{} \\
& \multicolumn{2}{c|}{\raisebox{.6\normalbaselineskip}[0pt][0pt]{$A_{J'\times I'}$}} & \multicolumn{2}{c}{\raisebox{.6\normalbaselineskip}[0pt][0pt]{$A_{J'\times J'}$}}
\end{array}
\right], \quad
\left[
\begin{array}{ c | c c | c c }
0 &  \multicolumn{4}{c}{\mbox{2nd LR (step \ref{step:10})}} \\
\cline{1-5}
\multirow{4}{*}{\vdots}  & & &  &  \\
& \multicolumn{2}{c|}{\raisebox{.6\normalbaselineskip}[0pt][0pt]{\makecell{2nd SC\\(step \ref{step:6})}}} & \multicolumn{2}{c}{\raisebox{.6\normalbaselineskip}[0pt][0pt]{\makecell{1st LR\\(step \ref{step:8})}}}\\
\cline{2-5}
&  &  & & \multicolumn{1}{c}{} \\
& \multicolumn{2}{c|}{\raisebox{.6\normalbaselineskip}[0pt][0pt]{\makecell{1st LR\\(step \ref{step:8})}}} & \multicolumn{2}{c}{\raisebox{.6\normalbaselineskip}[0pt][0pt]{\makecell{2nd SC\\(step \ref{step:6})}}}
\end{array}
\right]
\end{align*}
Note that ``$\ \vdots\ $" represents the block $A_{i\times(I'\cup J')}^\top$. We can see that the second spectral clustering and both likelihood ratio tests are applied on different blocks of the adjacency matrix, so we do not need to worry about dependence between steps. Now we present the theoretical guarantees of the output of Algorithm~\ref{alg:lrc}.

\begin{theorem}\label{thm:alg:mis:upper:bound}
	Let us assume \eqref{eq:abs:constants} and $K\le K^*$ for some fixed constants $\beta, \omega$, $\eps$, and $K^*$. We also briefly denote $D^*:=\min_{k\ne\ell} \cds{p_{k*}}{p_{\ell*}}$ and $\eta^*=\max_{k\ne\ell}\eta(p_{k*}, p_{\ell*})$. Suppose the spectral clustering algorithm returns $\tilde z$ satisfying $\mis(\tilde z ,z)= O((D^*)^{-1})$ with probability at least $1-n^{-(r+1)}$ for some $r>0$, then there exist constants $C_1$ and $C_2$ only depends on $\beta, \eps, K^*$ and $\omega$ such that, if $C_1\le np^*\le C_2 (D^*)^2$, then the output from Algorithm \ref{alg:lrc} satisfies:
	\begin{itemize}
		\item [(a)] If $D^*\le (1+r/2)\log n$, then $\E[\mis(\hat z,z)] = O(\eta^*)$.
		\item [(b)] For all $s>0$, if $D^*\ge (1+2s)\log n$, then $\mis(\hat z,z)\ge \eta^*$ with probability $O\Big( \f {1}{n^{r\wedge s}} \Big)$.
	\end{itemize}
	Moreover, $\eta^*$ can be replaced by $$\max_{k\ne\ell}\Big( \sqrt{np^*}\max_{j\in[n]}\Big|\log\f{p_{kj}(1-p_{\ell j})}{p_{\ell j}(1-p_{kj})} \Big| \Big)^{-1}\exp(-\cds{p_{k*}}{p_{\ell*}})$$ in statements above.
\end{theorem}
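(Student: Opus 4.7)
The plan is to analyze Algorithm~\ref{alg:lrc} node by node: for each index $j\in[n]$, show that the likelihood-ratio classifier in step~\ref{step:10} is an approximate Bayes test with parameters close to the truth, so that Lemma~\ref{lem:fund:limit} gives per-node error probability $O(\eta^*)$. Summing then yields part~(a), and a union bound yields part~(b). The block partitioning of step~\ref{step:3} together with the leave-one-out construction in step~\ref{step:5} is exactly the device that makes the plug-in estimates $\hat P$ and $\tilde z'$ used in step~\ref{step:10} independent of the test data $A_{j*}$, which is the independence required to invoke Lemma~\ref{lem:fund:limit} conditionally on these estimates.

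First I would verify consistency of the intermediate estimates. By hypothesis on \SC, with probability at least $1 - n^{-(r+1)}$ we have $\mis(\tilde z, z) = O(1/D^*)$, and applying \SC to the principal submatrices $A_{I'\times I'}$ and $A_{J'\times J'}$ yields $\mis(\tilde z'_{I'}, z_{I'}) = O(1/D^*)$ and analogously on $J'$; the \match calls in step~\ref{step:7} resolve the label permutation by a triangle inequality on misclassification rate. Standard Bernstein-type concentration for the binomial sums defining $\mathcal B(A, \tilde z)$ then shows that $\tilde P$ is uniformly entrywise close to $P$, with error $O(\sqrt{p^*/n^2})$ plus a term of order $p^*/D^*$ coming from the mislabelled nodes.

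Second, I would propagate this accuracy through step~\ref{step:8} to obtain a sharper estimate $\hat P$ in step~\ref{step:9}, and verify the independence structure. The first LR step uses only the cross-block $A_{I'\times J'}\cup A_{J'\times I'}$, which is independent of $A_{j*}$ and of $A_{I'\times I'}$, $A_{J'\times J'}$; conditioning on a high-probability event that $\tilde P$ lies in a deterministic neighborhood of $P$ removes the residual dependence introduced through $\tilde P$. Analyzing the plug-in log-likelihood via Theorem~\ref{thm:bayes:chernoff} should then yield $\mis(\tilde z'_{I'}, z_{I'}) = o(1/D^*)$ after this refinement, so $\hat P = \mathcal B(A, \tilde z')$ satisfies a sharper entrywise bound. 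Crucially, $\hat P$ and $\tilde z'$ use only the blocks $A_{I'\times I'}, A_{J'\times J'}, A_{I'\times J'}, A_{J'\times I'}$ together with the uniformly-controlled $\tilde P$, none of which involves the row $A_{j*}$, so on the good event they are independent of $A_{j*}$.

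The hard part will be step~\ref{step:10}: showing that the empirical LR with $(\hat P, \tilde z')$ attains the same order of error probability as the oracle LR with $(P, z_{-j})$. Writing out the decision statistic $\sum_{j'\ne j}\bigl[A_{jj'}\log \hat P_{k\tilde z'_{j'}} + (1-A_{jj'})\log(1-\hat P_{k\tilde z'_{j'}})\bigr]$, I would bound the perturbation caused by (i) the $o(n/D^*)$ indices with $\tilde z'_{j'}\ne z_{j'}$, and (ii) the entrywise error $|\hat P - P|$; both should shift the statistic by $o(\sqrt{n}\bar\sigma_n)$, which is lower-order than the dominant Gaussian-like fluctuation underlying Theorem~\ref{thm:bayes:chernoff}, so the per-node error remains within a constant factor of the Bayes rate $\eta(p_{k*}, p_{\ell*}) = \Theta(\eta^*)$ from Lemma~\ref{lem:fund:limit}. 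The hypothesis $np^* \le C_2 (D^*)^2$ is exactly what guarantees $\sqrt{n}\bar\sigma_n\alpha^*(1-\alpha^*) \ge C_3$ so the Chernoff lower bound applies. For part~(a), linearity of expectation then gives $\E[\mis(\hat z, z)] \le \frac{1}{n}\sum_{j=1}^n \P(\hat z_j \ne z_j) = O(\eta^*)$; for part~(b), $D^*\ge (1+2s)\log n$ forces $\eta^*$ polynomially small in $n$, and a union bound over the $n$ per-node failure events plus the $n^{-(r+1)}$ SC-failure event yields the stated $O(n^{-(r\wedge s)})$ bound on $\P(\mis(\hat z, z)\ge \eta^*)$.
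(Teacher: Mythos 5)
Your high-level plan matches the paper's: analyze the algorithm step by step, reduce to a per-node plug-in likelihood-ratio test, bound that by the Bayes error via Lemma~\ref{lem:fund:limit}, then sum (for part (a)) or apply Markov (for part (b)). But there are two concrete gaps in the key step, the analysis of step~\ref{step:10}.

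\emph{First, the independence claim for $\hat P$ is wrong.} Step~\ref{step:9} computes $\hat P\gets\mathcal B(A,\tilde z')$ from the \emph{full} adjacency matrix, which includes row $A_{j*}$. So on no event is $\hat P$ independent of the test data. The paper does not attempt to establish independence; instead it bounds, once and for all, the supremum over all $\hat P$ in a deterministic $\ell_\infty$-ball around $P$ (this is the $\exists \hat P\in B(\rho)$ in Lemma~\ref{lem:likelihood:ratio}), and separately proves via Lemma~\ref{lem:Phat:bound} that $\hat P$ lands in that ball with high probability \emph{uniformly over all candidate labelings}. This uniform-bound device is essential: your argument as stated reaches for an independence that the algorithm does not provide. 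The same device is also what lets you handle $\tilde P$, and the paper must additionally apply it inside the first LR step and control the resulting degree tails (Lemma~\ref{lem:degree:truncation}); none of this appears explicitly in your outline.

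\emph{Second, the perturbation threshold $o(\sqrt n\,\bar\sigma_n)$ is too lenient.} Lemma~\ref{lem:likelihood:ratio} shows that using $(\hat P,\tilde z')$ in place of $(P,z_{-j})$ inflates the per-node error by a multiplicative factor of order $\exp\big(C(n\rho + np^*\gamma)\big)$, where $\rho=\|\hat P - P\|_\infty$ and $\gamma$ is the label error rate. For part (a), to get $\E[\mis(\hat z,z)]=O(\eta^*)$ this factor must be $O(1)$, i.e.\ $n\rho + np^*\gamma = O(1)$. A shift that is merely $o(\sqrt n\,\bar\sigma_n)=o(\sqrt{np^*})$ only gives a penalty $\exp(o(\sqrt{np^*}))$, which in the regime $D^*\le(1+r/2)\log n$, $np^*\le C_2(D^*)^2$ is of size $n^{o(1)}$ --- enough to destroy the constant-factor optimality claimed in (a), and not clean enough for the rate in (b) either. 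This is precisely why the paper runs the parameter estimate twice with a carefully tuned choice $\tau_2\propto (1\vee\sqrt{p^*D^*})/(np^*)$ and uses the Prokhorov bound to control the number of nodes flipped by the first LR test; the label error after the first LR test is forced down to $O(e^{-D^*/16}+1/n)$, not merely $o(1/D^*)$, exactly so that $np^*\gamma_2=O(1)$. Without these quantitative targets your outline cannot close the argument.

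So your route is the same in spirit as the paper's, but the two points above --- replacing the independence claim with a uniform/supremum bound, and replacing the $o(\sqrt n\,\bar\sigma_n)$ criterion with the sharper $n\rho+np^*\gamma=O(1)$ criterion together with a second-stage estimate and concentration on the label error count --- are exactly the substance that makes the theorem hold with the stated constants.
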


\begin{remark}
	The assumption about spectral clustering algorithm is a theorem in \cite{zhou2018spectral}. We would like to refer readers to the original paper for details.
\end{remark}

\begin{remark}
	Suppose we further assume that $\min_{k\ne\ell} \max_{r\in[K]} \frac{P_{kr}}{P_{\ell r}} \vee \frac{P_{\ell r}}{P_{k r}}\ge \omega'$ for some constant $\omega'>1$, then the error bound achieve the minimax lower bound in Theorem~\ref{thm:minimax:lower:bound}. The consistency result are presented in two regimes, but they overlap since the choice of $s$ in (b) is arbitrary.
\end{remark}

\begin{remark}[Comparison with existing results.]
	We have already compared some results in literature. Here, we will summarize the novelty in details.
	\begin{itemize}
		\item[1.] Existing paper either consider the asymptotic behavior of optimal community detection in general undirected or bipartite SBM \cite{abbe2015community, zhou2018optimal} or symmetric assortative SBM \cite{chin2015stochastic, gao2017achieving}. We extend the minimax theory and algorithms to general directed SBM in a non-asymptotic setup.
		\item[2.] We apply twice local updates (likelihood ratio tests) on symmetric adjacency matrix in our algorithm. It is also possible to apply multiple times by partitioning more blocks. Although multiple step of local updates are allowed in \cite{zhang2017theoretical} by variational inference, data splitting method is required and lacking in their algorithm. 
		\item[3.] Thanks to the advanced Chernoff bound in Theorem~\ref{thm:bayes:chernoff}, we provide sharpened minimax error rate and tight misclassification rate for our algorithm. In particular, we replace the uncertain term $\exp(o(1)D^*)$ in \cite{zhang2016minimax} by an explicit expression.
	\end{itemize}
\end{remark}

\section{Simulation}\label{sec:simulation}

We will show that, in some asymptotic setting, the Bayes error probability converges with a rate expected in Theorem~\ref{thm:bayes:chernoff} by simulation. Let us consider Bernoulli distributions analyzed in Section~\ref{sec:example:bern}. Let $p_{01} = p_{02} = \dots  = p_{0n} = p_{1(n+1)} = p_{1(n+2)} = \dots = p_{1(2n)} = 0.55$, and $p_{11} = p_{12} = \dots  = p_{1n} = p_{0(n+1)} = p_{0(n+2)} = \dots = p_{0(2n)} = 0.45$, i.e.,
\begin{align*}
p_{0*} &= (\underbrace{0.55, 0.55, \dots, 0.55}_\text{n \text{times}}, \underbrace{0.45, 0.45, \dots, 0.45}_\text{n \text{times}});\\
p_{1*} &= (\underbrace{0.45, 0.45, \dots, 0.45}_\text{n \text{times}}, \underbrace{0.55, 0.55, \dots, 0.55}_\text{n \text{times}}).
\end{align*}
Now we consider Chernoff $\alpha$-divergence. The optimal $\alpha^*$ in Theorem~\ref{thm:bayes:chernoff} is 1/2 by symmetry. Using the notation in \eqref{eq:short:cd}, by \eqref{eq:chernoff:bern}, we have
\begin{align*}
e^{-\cds{p_{0*}}{p_{1*}}} = (2\sqrt{0.55\cdot 0.45})^n.
\end{align*}
By \eqref{eq:variance:z} with some details in Section~\ref{proof:eq:chernoff:bern}, we have
\begin{align*}
n\bar{\sigma}_n^2 = \sum_{j=1}^n \Big( \log \f{0.55^2}{0.45^2}  \Big)^2 0.5(1-0.5) = \Theta(n).
\end{align*}
By Theorem~\ref{thm:bayes:chernoff}, we expect
\begin{align*}
\eta(p_{0*}, p_{1*}) = \Theta\Big( \f{1}{\sqrt n} (2\sqrt{0.55\cdot 0.45})^n \Big).
\end{align*}
Or equivalently, $$a_n:=\log \eta(p_{0*}, p_{1*}) - n\log (2\sqrt{0.55\cdot 0.45}) $$
asymptotically behaves like $\f 12 \log n+C$. We can also think of $p_{0*}$ and $p_{1*}$ as the parameters in 
\eqref{eq:parameter:vector} associated with SBM with community sizes $n_0=n_1=n$ and connectivity matrix
$$P=
\begin{bmatrix}
0.55 &0.45\\
0.45 &0.55
\end{bmatrix}
.$$
By Theorem~\ref{thm:alg:mis:upper:bound}, we expect $$b_n:=\log (2\cdot\text{Misclassification rate}) - n\log (2\sqrt{0.55\cdot 0.45}) $$ also tends to $\f 12 \log n+C$. Note that 2 comes from the fact that $\eta(p_{0*}, p_{1*})=2\cdot (\text{Bayes error probability})$ would help the simulation scale better. We will use the true Bernoulli PMF to compute $\eta(p_{0*}, p_{1*})$, then find the misclassification rate of Algorithm~\ref{alg:lrc} and compute $b_n$.
\begin{center}
	\includegraphics[scale=0.5]{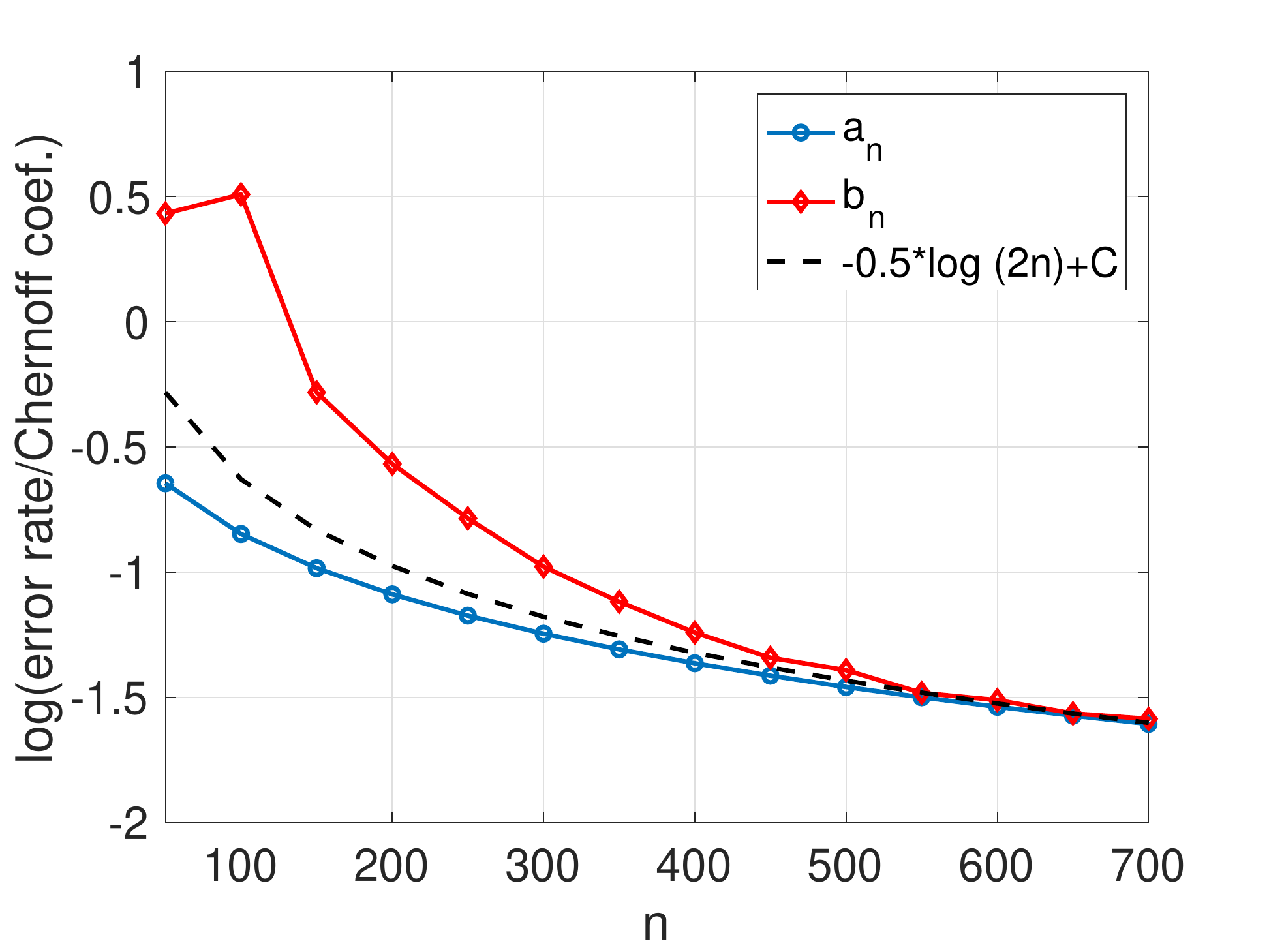}
\end{center}

From the plot, we observe that $n$ increases, both $a_n$ and $b_n$ converge to the same constant as expected. For smaller $n$, the misclassification rate is large since initialization in Algorithm~\ref{alg:lrc} is not accurate enough; however, $b_n$ becomes stable when $n$ getting large.

Another interesting empirical result we want to show by simulation is that, the constant involved in big $\Theta$ notation in Corollary~\ref{cor:bayes:chernoff} does not converge in general. We let $p_{0*} = 0.3\cdot \ones_n$ and $p_{1*} = 0.7 \cdot \ones_n$, i.e.,
\begin{align*}
p_{0*} = (\underbrace{0.3, 0.3, \dots, 0.3}_\text{n \text{times}})
\quad\text{and}\quad
p_{1*} = (\underbrace{0.7, 0.7, \dots, 0.7}_\text{n \text{times}} ).
\end{align*}
By symmetry, we have $\alpha^*=1/2$, so 
\begin{align*}
\eta(p_{0*}, p_{1*}) = \Theta\Big( \f{1}{\sqrt n} (0.3\cdot 0.7)^{n/2} \Big).
\end{align*}
Again, we let $a_n = \log \eta(p_{0*}, p_{1*}) - \f n2\log (0.3\cdot 0.7)$. The following plots shows the behavior of $a_n$.
\begin{center}
	\includegraphics[scale=0.45]{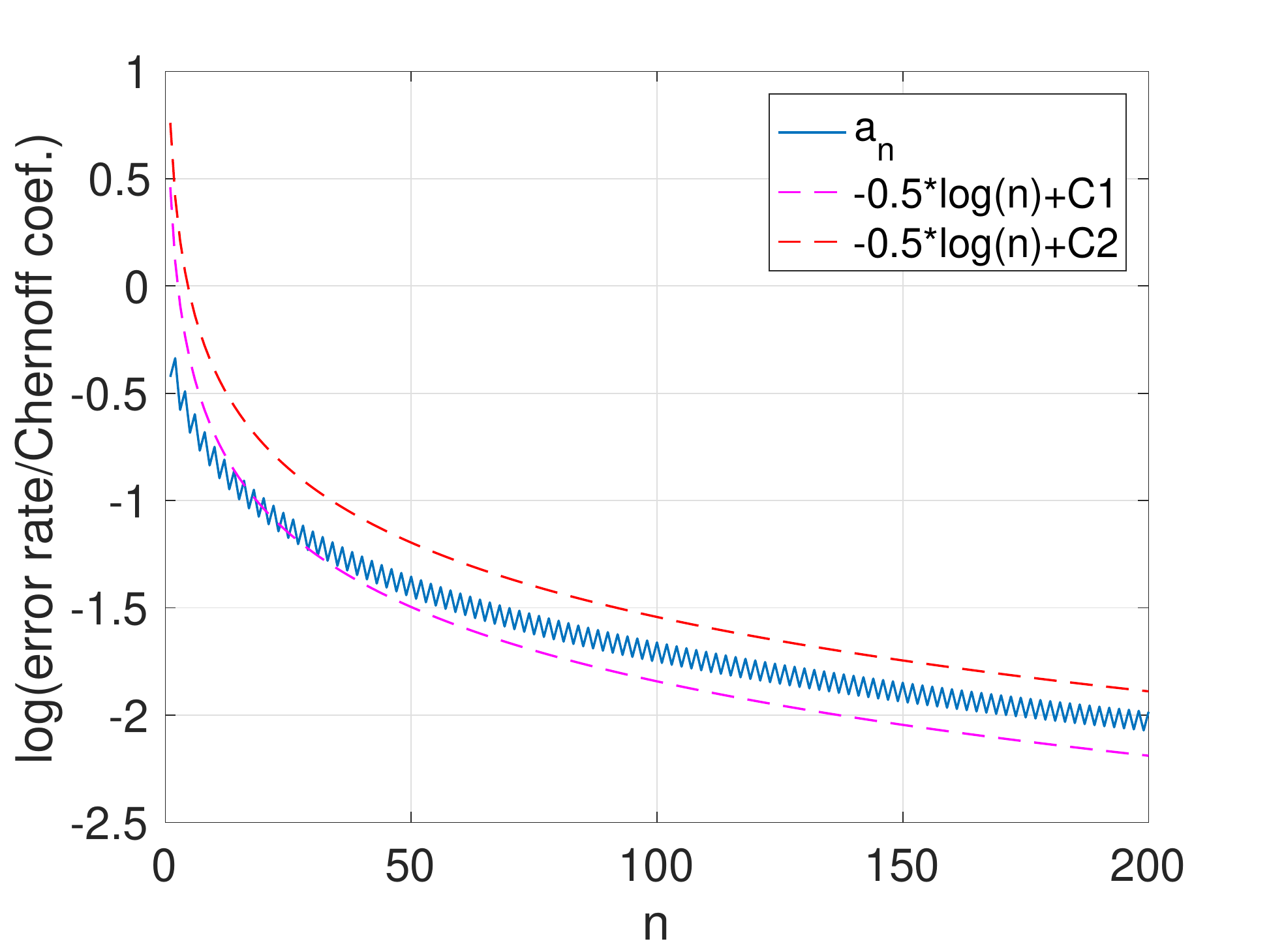}\quad
	\includegraphics[scale=0.45]{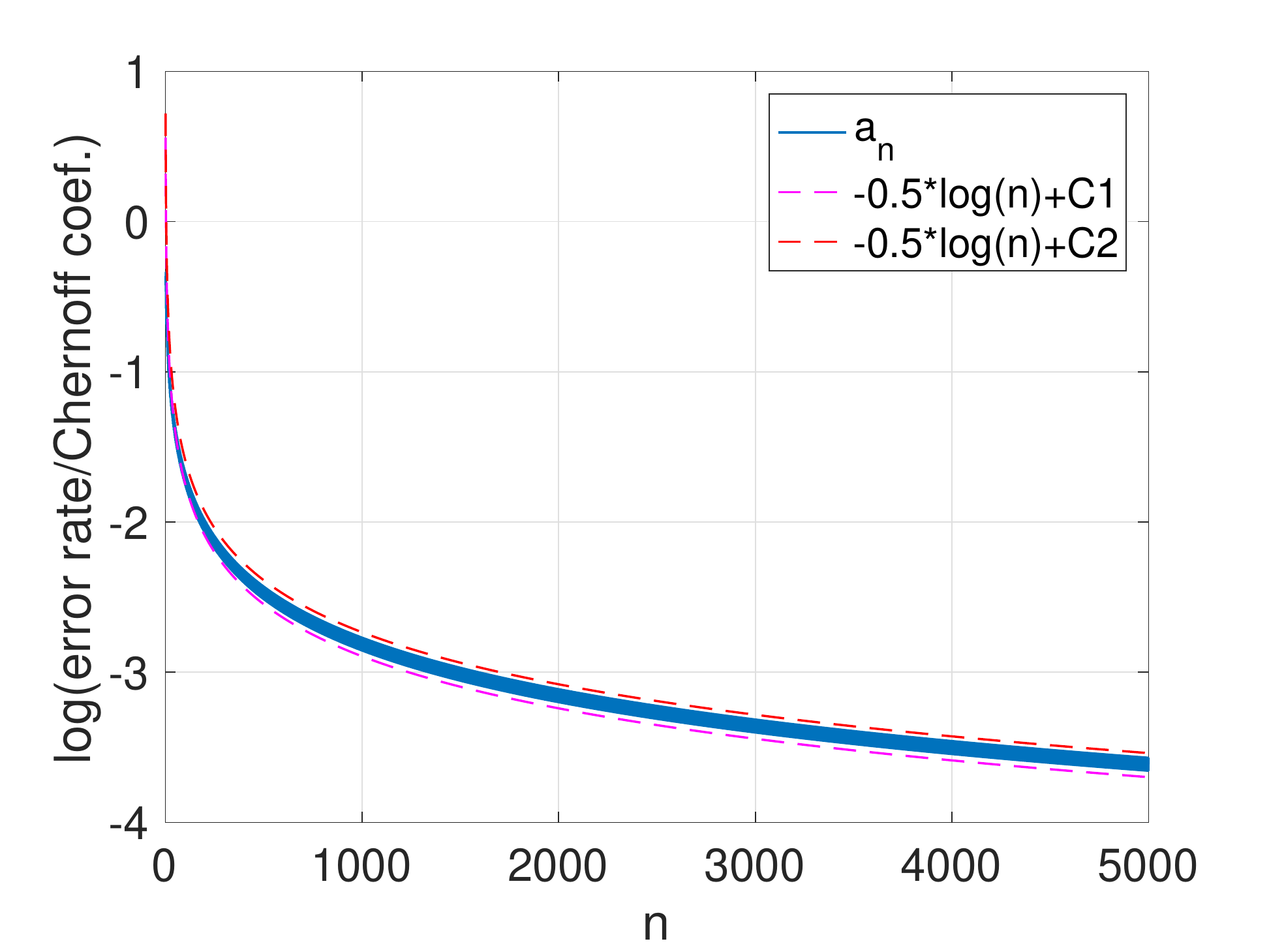}
\end{center}
They are plots of $a_n$ in different ranges of $n$. Although $a_n$ asymptotically behaves like $\f 12 \log n$, it oscillates up and down until infinity. This simulation result empirically shows that, 
$a_n - \f 12 \log n$ does not converge to any constant for such $p_{0*}$ and $p_{1*}$.

\section{Proofs of Section~\ref{sec:chernoff}}\label{sec:proofs}

\subsection{Proof of Theorem \ref{thm:bayes:chernoff}}

\begin{lemma}\label{lem:alpha:mean:zero}
	We recall $\alpha^* = \arg\max_{\alpha\in(0,1)} \cd{\varphi_0}{\varphi_1}$ and $Y\sim \varphi_{\alpha^*}$ from the assumption of the theorem, then $\E[\log l(Y)]=0$.
\end{lemma}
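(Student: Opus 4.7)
\medskip

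The plan is to identify $\mathbb{E}_{Y\sim\varphi_{\alpha}}[\log l(Y)]$ with the derivative $\frac{d}{d\alpha}D_\alpha(\varphi_0\|\varphi_1)$, then invoke the first-order optimality condition at $\alpha^*$. First I would verify that $\alpha^*$ lies in the open interval $(0,1)$ so that the optimality condition really is that the derivative vanishes. Setting $f(\alpha) := \int_\Omega \varphi_0^{1-\alpha}\varphi_1^{\alpha}\,d\mu$, one has $f(0)=f(1)=1$, so $D_0 = D_1 = 0$. By H\"older's inequality $f$ is log-convex in $\alpha$, hence $D_\alpha = -\log f(\alpha)$ is concave. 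Under the standing assumption that $\varphi_0 \neq \varphi_1$ on a set of positive measure, $D_{1/2}>0$, so the concave function $D_\alpha$ attains its maximum at an interior point, and that maximizer is unique by strict concavity on a neighborhood of $\alpha^*$.

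Next I would compute the derivative under the integral. Differentiating inside the integral (justified by the integrability hypothesis $\int (\varphi_0+\varphi_1)|\log(\varphi_1/\varphi_0)|\,d\mu<\infty$ on $(\varphi_0,\varphi_1)\in\mathcal F$, together with a dominated-convergence argument in a neighborhood of $\alpha^*$), I get
\begin{align*}
f'(\alpha) \;=\; \int_\Omega \varphi_0^{1-\alpha}\varphi_1^{\alpha}\bigl(\log\varphi_1 - \log\varphi_0\bigr)\,d\mu \;=\; -\int_\Omega \varphi_0^{1-\alpha}\varphi_1^{\alpha}\,\log l\,d\mu.
\end{align*}
Therefore $\frac{d}{d\alpha}D_\alpha(\varphi_0\|\varphi_1) = -f'(\alpha)/f(\alpha) = \int_\Omega \varphi_\alpha \log l\,d\mu = \mathbb{E}_{Y\sim\varphi_\alpha}[\log l(Y)]$, using the definition $\varphi_\alpha = \varphi_0^{1-\alpha}\varphi_1^{\alpha}/f(\alpha)$ from \eqref{eq:def:chernoff:div}.

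Finally, since $\alpha^*$ is an interior maximizer of the smooth concave function $D_\alpha$, the first-order condition gives $\frac{d}{d\alpha}D_\alpha\big|_{\alpha=\alpha^*}=0$, which by the identity above is exactly $\mathbb{E}_{Y\sim\varphi_{\alpha^*}}[\log l(Y)] = 0$. The only delicate step is the differentiation under the integral sign; everything else is soft. This delicacy is manageable because on a closed subinterval $[\alpha^*-\delta,\alpha^*+\delta]\subset(0,1)$ the integrand $\varphi_0^{1-\alpha}\varphi_1^{\alpha}|\log l|$ is dominated by $(\varphi_0+\varphi_1)|\log l|$, which is $\mu$-integrable by the family assumption \eqref{eq:density:family}.
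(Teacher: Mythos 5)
Your proof is correct and follows essentially the same route as the paper's: differentiate $f(\alpha)=\int\varphi_0^{1-\alpha}\varphi_1^{\alpha}\,d\mu$ under the integral sign (justified by the integrability of $(\varphi_0+\varphi_1)|\log(\varphi_1/\varphi_0)|$), observe that $\tfrac{d}{d\alpha}D_\alpha=\E_{\varphi_\alpha}[\log l]$, and invoke the first-order condition at $\alpha^*$. You are somewhat more careful than the paper about why $\alpha^*$ is interior (via H\"older's inequality giving log-convexity of $f$, hence concavity of $D_\alpha$, together with $D_0=D_1=0<D_{1/2}$), whereas the paper leans on a pointwise convexity remark and leaves interiority implicit in the definition of $\alpha^*$.
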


\begin{proof}
	By definition of $Y$, we have
	\begin{align*}
	\E[\log l(Y)] &= \int_\Omega \varphi_{\alpha^*} \log l(y) d\mu(y) = e^{-\cds{\varphi_0}{\varphi_1}} \int_\Omega \varphi_0^{1-\alpha_*} \varphi_1^{\alpha_*} \log \f{\varphi_1}{\varphi_0} d\mu
	\end{align*}
	Recall that we assume the Kullback–Leibler divergence $\kld{\varphi_0}{\varphi_1}$ and $\kld{\varphi_1}{\varphi_0}$ exist, so for $\alpha\in(0,1), \big|\varphi_0^{1-\alpha}\varphi_1^\alpha \log \f{\varphi_1}{\varphi_0}\big| \le \big|(\varphi_0+\varphi_1)\log \f{\varphi_1}{\varphi_0}\big|$ is integrable. By mean value theorem and dominated convergence theorem,
	\begin{align}
	\int_\Omega \varphi_0^{1-\alpha} \varphi_1^{\alpha} \log \f{\varphi_1}{\varphi_0} d\mu = \int_\Omega \f{d}{d\alpha} \varphi_0^{1-\alpha}\varphi_1^\alpha d\mu =\f{d}{d\alpha}\int_\Omega \varphi_0^{1-\alpha}\varphi_1^\alpha d\mu.
	\end{align}
	Since $\alpha\mapsto -\varphi_0(x)^{1-\alpha} \varphi_1(x)^\alpha$ is convex for $x\in\Omega$, $\alpha\mapsto -\int_\Omega \varphi_0^{1-\alpha} \varphi_1^\alpha d\mu$ is also convex, and it is indeed strictly convex if $\varphi_0\ne \varphi_1$ on a set with nonzero measure. Therefore, $\cd{\varphi_0}{\varphi_1}$ achieves maximum if and only if $\f{d}{d\alpha}\int_\Omega \varphi_0^{1-\alpha}\varphi_1^\alpha d\mu=0$, which is true if we evaluate at $\alpha = \alpha^*$. Hence $\E[\log l(Y)]  = 0$.
\end{proof}

\begin{prop}\label{prop:norm:cdf}
	Let $\Phi$ be the cumulative distribution function of standard normal distribution, then for $x>0$,
	\begin{align}\label{eq:normal:cdf:infinity}
	\f 1x-\f 1{x^3}\le \sqrt{2\pi}e^{x^2/2}\Phi(-x)  \le \f 1x.
	\end{align}
	and
	\begin{align}\label{eq:normal:cdf:zero}
	\sqrt{2\pi} e^{x^2/2}\Big( \Phi(x)-\f 12 \Big)\ge x+\f {x^3}3.
	\end{align}
	In particular,
	\begin{align}\label{eq:normal:cdf:zero:part}
	\Phi(x)\ge \frac 12 + \f 1{\sqrt{2\pi}} \Big( x-\f{2x^3}3 \Big).
	\end{align}
\end{prop}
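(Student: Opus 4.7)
The plan is to treat the three inequalities as separate one-variable calculus exercises, each handled by exhibiting an auxiliary function that vanishes at a convenient endpoint and then checking its derivative has the right sign.

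For the Mills-ratio bounds in \eqref{eq:normal:cdf:infinity}, I would rewrite $\Phi(-x) = (2\pi)^{-1/2}\int_x^\infty e^{-t^2/2}\,dt$ and attack the tail integral by integration by parts. The upper bound is immediate: since $1 \le t/x$ on $[x,\infty)$, one has $\int_x^\infty e^{-t^2/2}\,dt \le x^{-1}\int_x^\infty t e^{-t^2/2}\,dt = x^{-1} e^{-x^2/2}$. For the lower bound, write the integrand as $\tfrac{1}{t}\cdot t e^{-t^2/2}$ and integrate by parts with $u = 1/t$, $dv = t e^{-t^2/2}\,dt$, producing $x^{-1}e^{-x^2/2} - \int_x^\infty t^{-2} e^{-t^2/2}\,dt$. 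A second integration by parts with $u = t^{-3}$, $dv = t e^{-t^2/2}\,dt$ shows $\int_x^\infty t^{-2} e^{-t^2/2}\,dt = x^{-3}e^{-x^2/2} - 3\int_x^\infty t^{-4} e^{-t^2/2}\,dt \le x^{-3} e^{-x^2/2}$, and combining the two bounds gives $\int_x^\infty e^{-t^2/2}\,dt \ge (x^{-1} - x^{-3})e^{-x^2/2}$.

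For \eqref{eq:normal:cdf:zero}, the cleanest route is an ODE argument. Let $g(x) := \sqrt{2\pi}\, e^{x^2/2}(\Phi(x) - 1/2) - x - x^3/3$. Then $g(0) = 0$, and a direct differentiation using $\Phi'(x) = (2\pi)^{-1/2} e^{-x^2/2}$ collapses (the $e^{x^2/2}\Phi'(x)$ term cancels the constant $1$, and the surviving $x \cdot \sqrt{2\pi}e^{x^2/2}(\Phi(x)-1/2)$ term assembles into $xg(x) + x(x+x^3/3)$) to the linear first-order ODE
\[
g'(x) - x g(x) \;=\; x^4/3.
\]
Multiplying through by the integrating factor $e^{-x^2/2}$ gives $\bigl(e^{-x^2/2} g(x)\bigr)' = (x^4/3) e^{-x^2/2} \ge 0$, and integrating from $0$ to $x$ together with $g(0)=0$ yields $g(x) \ge 0$ for all $x > 0$.

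Finally, \eqref{eq:normal:cdf:zero:part} I would deduce by setting $h(x) := \Phi(x) - 1/2 - (2\pi)^{-1/2}(x - 2x^3/3)$, so that $h(0)=0$ and $h'(x) = (2\pi)^{-1/2}(e^{-x^2/2} - 1 + 2x^2)$. The map $x \mapsto e^{-x^2/2} - 1 + 2x^2$ vanishes at $0$ and has derivative $x(4 - e^{-x^2/2}) \ge 0$ for $x \ge 0$, so $h' \ge 0$ and the bound follows. I do not expect a real obstacle: once one notices the ODE structure behind \eqref{eq:normal:cdf:zero}, the integrating factor reduces the problem to a trivial positivity statement, and the other two inequalities are standard Mills-ratio and monotonicity arguments.
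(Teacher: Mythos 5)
Your proposal is correct, and it takes a genuinely different (and arguably tighter) route than the paper. The paper derives all three inequalities by series expansion: for \eqref{eq:normal:cdf:infinity} it writes out the divergent asymptotic series $\f1x-\f1{x^3}+\f3{x^5}-\cdots$ for the Mills ratio and truncates, for \eqref{eq:normal:cdf:zero} it uses the convergent Taylor series $x+\f{x^3}3+\f{x^5}{15}+\cdots$, and for \eqref{eq:normal:cdf:zero:part} it expands $e^{-x^2/2}(x+\f{x^3}3)$ and truncates. The first and third of these truncations rest on the standard but unremarked fact that an alternating asymptotic series with eventually decreasing terms brackets its sum; your integration-by-parts argument for \eqref{eq:normal:cdf:infinity} is precisely the rigorous, finitary version of that bracketing and closes the gap the paper leaves implicit. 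Your ODE treatment of \eqref{eq:normal:cdf:zero} is new relative to the paper and correct: with $f(x):=\sqrt{2\pi}e^{x^2/2}(\Phi(x)-\tfrac12)$ one has $f'(x)=xf(x)+1$, which gives exactly your linear equation $g'-xg=x^4/3$ after subtracting $x+x^3/3$, and the integrating factor kills it cleanly. Your direct monotonicity proof of \eqref{eq:normal:cdf:zero:part} via $h'(x)=(2\pi)^{-1/2}(e^{-x^2/2}-1+2x^2)\ge 0$ is also correct and is independent of \eqref{eq:normal:cdf:zero}, unlike the paper's route which attempts to deduce \eqref{eq:normal:cdf:zero:part} from \eqref{eq:normal:cdf:zero} through an expansion of $e^{-x^2/2}(x+x^3/3)$ whose displayed coefficients are in fact not right ($e^{-x^2/2}(x+x^3/3)=x-\tfrac{x^3}{6}+\cdots$, not $x-\tfrac{2x^3}{3}+\cdots$); the inequality \eqref{eq:normal:cdf:zero:part} still holds, but your self-contained argument is the safer one. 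In short: same conclusions, but your proof trades the paper's formal series manipulations for elementary calculus with every step justified.
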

\begin{proof}
	For $x>0$, we have divergent series expanded at $\infty$:
	\begin{align*}
	\sqrt{2\pi}e^{x^2/2}\Phi(-x) = \f 1x + \sum_{i=1}^\infty \f{(-1)^i(2i-1)!}{2^{i-1}(i-1)!} \f 1{x^{2i+1}}
	= \f 1x - \f 1{x^3} + \f 3{x^5} - \f {15}{x^7} + \dots
	\end{align*}
	which implies \eqref{eq:normal:cdf:infinity}. We also have the power series expanded at $0$:
	\begin{align*}
	\sqrt{2\pi} e^{x^2/2}\Big( \Phi(x)-\f 12 \Big)
	= \sum_{i=0}^\infty \f{x^{2i+1}}{(2i+1)!!} = x+ \f {x^3}3 + \f{x^5}{3\cdot 5} + \f{x^7}{3\cdot 5\cdot 7}+ \dots
	\end{align*}
	which implies \eqref{eq:normal:cdf:zero}. Then we expand $e^{-x^2/2}\Big( x+\f {x^3}3 \Big)$ and have
	\begin{align*}
	e^{-x^2/2}\Big( x+\f {x^3}3 \Big)=x-\f {2x^3}3 + \f{7x^5}{30}-\dots
	\end{align*}
	Then we obtain \eqref{eq:normal:cdf:zero:part}.
\end{proof}

\begin{lemma}\label{lemma:normal:case}
	Recall from \eqref{eq:g:alpha} that $g_\alpha(x)=\exp(\min(\alpha x, (\alpha-1)x))$. Suppose $Z\sim\mathcal N(0,\sigma^2)$, then
	\begin{align*}
	\f 1{\sqrt{2\pi}\sigma \alpha (1-\alpha)}-\f 1{\sqrt{2\pi}\sigma^3 \alpha^3 (1-\alpha)^3}\le \E[g_\alpha(Z)] \le \f 1{\sqrt{2\pi}\sigma \alpha (1-\alpha)}.
	\end{align*}
\end{lemma}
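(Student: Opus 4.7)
The plan is to compute $\mathbb{E}[g_{\alpha}(Z)]$ in closed form and then apply the two-sided bound on $\Phi(-x)$ from Proposition~\ref{prop:norm:cdf}.

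First I would unfold the definition of $g_\alpha$: since $\alpha - (\alpha-1) = 1 > 0$, we have $g_\alpha(x) = e^{\alpha x}$ for $x \le 0$ and $g_\alpha(x) = e^{(\alpha-1)x}$ for $x > 0$. Splitting the expectation over $Z \sim \mathcal{N}(0,\sigma^2)$ at $0$ and completing the square ($\alpha x - x^2/(2\sigma^2) = -(x-\alpha\sigma^2)^2/(2\sigma^2) + \alpha^2\sigma^2/2$, and analogously for the other branch), one obtains, after a standard change of variables,
\begin{align*}
\mathbb{E}[g_\alpha(Z)] = e^{\alpha^2\sigma^2/2}\,\Phi(-\alpha\sigma) + e^{(1-\alpha)^2\sigma^2/2}\,\Phi(-(1-\alpha)\sigma).
\end{align*}

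The upper bound then follows instantly from the right-hand inequality in \eqref{eq:normal:cdf:infinity} applied at $x = \alpha\sigma$ and $x = (1-\alpha)\sigma$: each of the two terms is bounded by $1/(\sqrt{2\pi}\alpha\sigma)$ and $1/(\sqrt{2\pi}(1-\alpha)\sigma)$ respectively, and these combine to $\frac{1}{\sqrt{2\pi}\sigma\alpha(1-\alpha)}$.

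For the lower bound, I would use the left inequality in \eqref{eq:normal:cdf:infinity}, giving
\begin{align*}
\mathbb{E}[g_\alpha(Z)] \ge \frac{1}{\sqrt{2\pi}\sigma\alpha(1-\alpha)} - \frac{1}{\sqrt{2\pi}\sigma^3}\Big(\frac{1}{\alpha^3} + \frac{1}{(1-\alpha)^3}\Big).
\end{align*}
To match the stated bound, the last parenthesis must be dominated by $1/(\alpha^3(1-\alpha)^3)$, which is equivalent to $\alpha^3 + (1-\alpha)^3 \le 1$. This is immediate from the identity $\alpha^3 + (1-\alpha)^3 = 1 - 3\alpha(1-\alpha)$, which is strictly less than $1$ on $(0,1)$.

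There is no real obstacle here; the only thing to watch is bookkeeping of the sign in completing the square and the final algebraic simplification $\alpha^3 + (1-\alpha)^3 \le 1$. Note that \eqref{eq:normal:cdf:zero} and \eqref{eq:normal:cdf:zero:part} are not needed for this particular lemma — they would be relevant when $\alpha\sigma$ or $(1-\alpha)\sigma$ is small, but the formulation we target is already valid as stated (it is vacuous when $\sigma^2\alpha^2(1-\alpha)^2 \le 1$, in which case the lower bound is non-positive).
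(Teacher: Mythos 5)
Your proof is correct and follows essentially the same route as the paper's: split the Gaussian integral at zero, complete the square to express $\E[g_\alpha(Z)]$ as $e^{\sigma^2\alpha^2/2}\Phi(-\sigma\alpha)+e^{\sigma^2(1-\alpha)^2/2}\Phi(-\sigma(1-\alpha))$, then apply the two-sided Mills-ratio bound from Proposition~\ref{prop:norm:cdf}. The only addition is that you spell out the final algebraic step $\alpha^3+(1-\alpha)^3\le 1$ via the identity $\alpha^3+(1-\alpha)^3=1-3\alpha(1-\alpha)$, which the paper leaves implicit.
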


\begin{proof}
	We have
	\begin{align*}
	\E[g_\alpha(Z)] &= \E[\exp(\min(\alpha Z, (\alpha-1)Z))]\\
	&=\f 1{\sqrt{2\pi}\sigma}\int_{-\infty}^0 e^{\alpha x} e^{-\f{x^2}{2\sigma^2}} dx + \f 1{\sqrt{2\pi}}\int_{0}^\infty e^{(\alpha-1)x} e^{-\f{x^2}{2\sigma^2}}dx\\
	&=\f {e^{\sigma^2\alpha^2/2}}{\sqrt{2\pi}\sigma}\int_{-\infty}^0  e^{-\f{(x-\sigma^2\alpha)^2}{2\sigma^2}} dx + \f {e^{\sigma^2(1-\alpha)^2/2}}{\sqrt{2\pi}\sigma}\int_{0}^\infty e^{-\f{(x-\sigma^2(1-\alpha))^2}{2\sigma^2}}dx\\
	& = \exp\Big( \f{\sigma^2 \alpha^2}2\Big)\Phi(-\sigma \alpha)+\exp\Big( \f{\sigma^2 (1-\alpha)^2}2\Big)\Phi(-\sigma (1-\alpha))
	\end{align*}
	By \eqref{eq:normal:cdf:infinity}, we have
	\begin{align*}
	\E[g_\alpha(Z)]\le \f 1{\sqrt{2\pi}\sigma \alpha}+\f 1{\sqrt{2\pi}\sigma (1-\alpha)} = \f 1{\sqrt{2\pi}\sigma \alpha (1-\alpha)},
	\end{align*}
	and
	\begin{align*}
	\E[g_\alpha(Z)]&\ge \f 1{\sqrt{2\pi}\sigma \alpha}-\f 1{\sqrt{2\pi}\sigma^3 \alpha^3}+\f 1{\sqrt{2\pi}\sigma (1-\alpha)} - \f 1{\sqrt{2\pi}\sigma^3 (1-\alpha)^3}\\
	&\ge \f 1{\sqrt{2\pi}\sigma \alpha (1-\alpha)}-\f 1{\sqrt{2\pi}\sigma^3 \alpha^3 (1-\alpha)^3}.
	\end{align*}
\end{proof}

\begin{theorem}[Berry-Esseen theorem]\label{thm:berry:esseen}
	Let $\{Z_i\}_{i=1}^n$ be independent random variables with zero means and $\E[\sum_{i=1}^n Z_i^2]=\sigma^2$. Let $F$ be the distribution function of $\sum_{i=1}^n Z_i/\sigma$, then there exists an absolute constant $C_0\le 0.56$ such that for every $x\in\reals$,
	\begin{align}
	|F(x)-\Phi(x)|\le \f{C_0\sum_{i=1}^n\E[|Z_i|^3] } {\sigma^3}.
	\end{align}
\end{theorem}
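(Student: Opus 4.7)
The plan is to prove Berry--Esseen by the classical Fourier-analytic route via Esseen's smoothing inequality, then optimize constants. Normalize and write $W = \sigma^{-1}\sum_{j=1}^n Z_j$, let $f(t) := \E[e^{itW}] = \prod_{j=1}^n \E[e^{itZ_j/\sigma}]$, and let $g(t) := e^{-t^2/2}$ be the characteristic function of the standard normal. Set $\sigma_j^2 := \E[Z_j^2]/\sigma^2$ and $\beta_j := \E|Z_j|^3/\sigma^3$, so that $\sum_j \sigma_j^2 = 1$ and $L_n := \sum_j \beta_j$ is the Lyapunov ratio we want to show controls $\sup_x|F(x)-\Phi(x)|$.

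The first step is to invoke Esseen's smoothing inequality: for every $T>0$,
\[
\sup_x |F(x) - \Phi(x)| \le \frac{1}{\pi}\int_{-T}^{T} \left|\frac{f(t) - g(t)}{t}\right|\,dt + \frac{24}{\pi T \sqrt{2\pi}}.
\]
This reduces the theorem to bounding $|f(t) - g(t)|/|t|$ on a judiciously chosen window $[-T,T]$. The second step is a pointwise Taylor expansion of each factor. Because $\E Z_j = 0$, Taylor's theorem with remainder gives
\[
\left|\E[e^{itZ_j/\sigma}] - \bigl(1 - \tfrac{t^2\sigma_j^2}{2}\bigr)\right| \le \frac{|t|^3\beta_j}{6},
\]
and an elementary expansion of the exponential yields
\[
\left|e^{-t^2\sigma_j^2/2} - \bigl(1 - \tfrac{t^2\sigma_j^2}{2}\bigr)\right| \le \frac{t^4\sigma_j^4}{8}.
\]
The third step combines these factor-wise estimates. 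Using the standard telescoping inequality $|\prod a_j - \prod b_j| \le \sum_j |a_j - b_j|$ for complex numbers of modulus at most $1$ (restricting $t$ so that both Taylor approximants lie in the unit disk, which is guaranteed on $|t|\le T$ if $T$ is chosen small relative to $L_n^{-1/3}$), one derives an aggregate bound of the form
\[
|f(t) - g(t)| \,\le\, g(t)\Bigl(\tfrac{|t|^3 L_n}{6} + \tfrac{t^4 \sum_j \sigma_j^4}{8}\Bigr)\cdot (\text{mild correction}),
\]
where the Gaussian prefactor $g(t)$ appears because one factors out $e^{-t^2/2}$ and bounds $\prod_j(1+r_j) - 1$. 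Since $\sum_j \sigma_j^4 \le L_n^{2/3}\sum_j\sigma_j^2\cdot(\text{bound})$ by Hölder, everything reduces to $L_n$.

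The fourth step is the balancing of $T$ in Esseen's inequality. Dividing by $|t|$, the leading integral contribution is of order $T^3 L_n / 18$ (after using the Gaussian decay to absorb the $t^4$ term) while the tail contributes $24/(\pi T \sqrt{2\pi})$. Choosing $T$ proportional to $L_n^{-1}$ balances these and produces the bound $\sup_x|F(x)-\Phi(x)| \le C_0 L_n$ for an absolute constant $C_0$.

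The main obstacle is not the qualitative bound, which follows cleanly, but getting the numerical constant down to $0.56$. A crude implementation of the above easily yields $C_0$ on the order of $2$ or $3$. Pushing down to $0.56$ requires the sharp refinements of Shevtsova: replacing the Taylor remainder by the tighter identity $|e^{iu}-1-iu+u^2/2|\le \min(|u|^3/6, u^2)$, splitting the integration range at a point like $|t|\sim L_n^{-1/3}$ to handle small and moderate $t$ differently, using sharper estimates on $\log f(t)$ in the central region, and carefully choosing $T$ (of order roughly $1/L_n$) with optimal multiplicative factor. I would follow that quantitative refinement rather than recompute it; the scheme is standard and the constant $0.56$ is the one obtained from the cleaned-up Shevtsova optimization. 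This quantitative tightening --- not the conceptual structure --- is where the real work sits.
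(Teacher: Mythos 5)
Your proposal is correct and ultimately rests on the same foundation as the paper: the paper gives no proof of this theorem, only a citation to Shevtsova (2010) for the constant $C_0\le 0.56$ (and a pointer to Chen--Lo and Shao's Stein's-method proof), and your Fourier-analytic sketch likewise defers to Shevtsova for the sharp constant. The Esseen-smoothing outline you add is the standard classical route and is sound as a qualitative argument; the quantitative work that produces $0.56$ lives, as you acknowledge, entirely in the cited reference.
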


\begin{proof}
	The best upper bound of $C_0$ so far is given by \cite{shevtsova2010improvement}. We would also like to refer readers to see a proof by Stein's method in \cite{chen2001non}.
\end{proof}

\begin{proof}[Proof of Theorem \ref{thm:bayes:chernoff}]
	Let $\alpha = \alpha^*$, then by Lemma \ref{lem:alpha:mean:zero}, $\E[\sum_{j=1}^n Z_j] =\E[\log l(Y)]= 0$. Let us define $\sigma^2 = \E[\sum_{j=1}^n Z_j^2] $ as in Theorem \ref{thm:berry:esseen}. Note that $\sigma = \sqrt n \bar\sigma_n$. By assumption ${\sum_{j=1}^n \E|Z_j|^3} \le C_1 n \bar\sigma_n^2$, the distribution function $F$ of $\summ j n Z_j/\sigma$ satisfies for $x\in\reals$, $|F(x)-\Phi(x)|\le \f C\sigma$ where $C:=0.56C_1$. Recall that $\log l(Y) = \summ j n Z_j$, we have
	\begin{align*}
	\E[g_\alpha(\log l(Y))]
	&= \int_0^1  \P(g_\alpha(\log l(Y))>x)dx\\
	&= \int_0^1 \P \Big( \f{\log x}{\alpha \sigma} < \f{\log l(Y)}{\sigma} < \f{\log x}{(\alpha -1)\sigma} \Big) dx\\
	& = \int_0^1 F\Big( \f{\log x}{(\alpha -1)\sigma}\Big) - F\Big( \f{\log x}{\alpha\sigma} \Big) dx \\
	&\le \int_0^1\Phi\Big( \f{\log x}{(\alpha -1)\sigma}\Big) - \Phi\Big( \f{\log x}{\alpha\sigma} \Big) dx+\f{2C}\sigma\\
	&=\f 1{\sqrt{2\pi}\sigma \alpha(1-\alpha)} +\f{2C}\sigma
	\le \f {1+C/2}{\sigma \alpha(1-\alpha)}.
	\end{align*}
	On the other hand, for any $t\in [0,1]$,
	\begin{align*}
	\E[g_\alpha(\log l(Y))]
	& = \int_0^1 F\Big( \f{\log x}{(\alpha -1)\sigma}\Big) - F\Big( \f{\log x}{\alpha\sigma} \Big) dx\\
	&\ge \int_0^t \Phi\Big( \f{\log x}{(\alpha -1)\sigma}\Big) - \Phi\Big( \f{\log x}{\alpha\sigma} \Big) -
	\f{2C}\sigma dx\\
	&= \int_0^t \Phi\Big( \f{\log x}{(\alpha -1)\sigma}\Big) - \Phi\Big( \f{\log x}{\alpha\sigma} \Big) dx-
	\f{2tC}\sigma.
	\end{align*}
	By Fubini's theorem,
	\begin{align}\label{eq:fubini}
	\begin{split}
	&\int_0^t \Phi\Big( \f{\log x}{\sigma(\alpha -1)}\Big) - \Phi\Big( \f{\log x}{\sigma\alpha} \Big) dx
	=\int_0^t \frac 1{\sqrt{2\pi}} \int_{\f{\log x}{\sigma(\alpha -1)}}^{\f{\log x}{\sigma\alpha}}e^{-y^2/2}dydx\\
	&=\f 1{\sqrt{2\pi}}\Big[\int_{-\infty}^{\f{\log t}{\sigma(\alpha-1)}}e^{\sigma(\alpha-1)y-\f{y^2}{2}}dy
	+\int_{-\f{\log t}{\sigma\alpha}}^{\infty}e^{-\sigma\alpha x-\f{y^2}{2}}dy\Big] 
	+t\Big[ \Phi\Big( \f{\log t}{\sigma(\alpha-1)} \Big) - \Phi\Big( \f{\log t}{\sigma\alpha} \Big) \Big].
	\end{split}
	\end{align}
	The second integral in the last line can be evaluated as
	\begin{align*}
	\f 1{\sqrt{2\pi}}\int_{-\f{\log t}{\sigma\alpha}}^{\infty}e^{-\sigma\alpha y-\f{y^2}{2}}dy
	=\f 1{\sqrt{2\pi}}\int_{-\f{\log t}{\sigma\alpha}}^{\infty}e^{\f{\sigma^2\alpha^2}2-\f{(y+\sigma\alpha)^2}{2}}dy
	={e^{\f{\sigma^2\alpha^2}2}} \Phi\Big( \f{\log t}{\sigma\alpha} -\sigma\alpha \Big).
	\end{align*}
	For the first integral, we similarly have
	\begin{align*}
	\f 1{\sqrt{2\pi}}\int_{-\infty}^{\f{\log t}{\sigma(\alpha-1)}}e^{\sigma(\alpha-1)y-\f{y^2}{2}}dy
	={e^{\f{\sigma^2(1-\alpha)^2}2}} \Phi\Big( \f{\log t}{\sigma(1-\alpha)} -\sigma(1-\alpha) \Big).
	\end{align*}
	Assuming $\sigma\alpha(1-\alpha)\ge \sqrt{2\pi}C\vee 2$ and let $t =\exp[-2\sqrt{2\pi} C(1-\alpha)\alpha]$, using $\alpha(1-\alpha)\le 1/4$, we have
	\begin{align*}
	-\f{\log t}{\sigma\alpha}\le \f{2\sqrt{2\pi} C\alpha(1-\alpha)}{\sqrt{2\pi}C}\le \f 12
	\quad \text{and}\quad
	\sigma\alpha-\f{\log t}{\sigma\alpha}\le \sigma\alpha+\f 12\le \f{5\sigma\alpha}4.
	\end{align*}
	By \eqref{eq:normal:cdf:infinity} and the fact that the function $\f 1x - \f 1{x^3}$ is decreasing on $[\sqrt 3, \infty]$, we have
	\begin{align*}
	{e^{\f{\sigma^2\alpha^2}2}} \Phi\Big( \f{\log t}{\sigma\alpha} -\sigma\alpha \Big)
	&\ge \f 1{\sqrt{2\pi}}\exp\Big({\f{\sigma^2\alpha^2-\big( \f{\log t}{\sigma\alpha} -\sigma\alpha \big)^2}2}\Big)\Big[ \f 1{\sigma\alpha-\f{\log t}{\sigma\alpha} } -\f 1{\Big(\sigma\alpha-\f{\log t}{\sigma\alpha}\Big)^3 }\Big]\\
	&\ge \f 1{\sqrt{2\pi}}\exp\Big( \log t - \f 12 \Big( \f{\log t}{\sigma\alpha} \Big)^2 \Big)\Big[ \f 4{5\sigma\alpha} -  \f {64}{125\sigma^3\alpha^3} \Big]\\
	&\ge \f 1{\sqrt{2\pi}}\exp(-2\sqrt{2\pi} C (1-\alpha)\alpha-1/8)\Big[ \f 4{5\sigma\alpha} -  \f {64}{125(4\sigma\alpha)} \Big]\\
	&\ge \f{\exp(-2\sqrt{2\pi}C)}{5\sigma\alpha}.
	\end{align*}
	Similarly, we have
	\begin{align*}
	{e^{\f{\sigma^2(1-\alpha)^2}2}} \Phi\Big( \f{\log t}{\sigma(1-\alpha)} -\sigma(1-\alpha) \Big)\ge \f{\exp(-2\sqrt{2\pi}C)}{5\sigma(1-\alpha)}.
	\end{align*}
	Hence the integral in \eqref{eq:fubini} has lower bound
	\begin{align}\label{eq:two:integrals}
	\begin{split}
	\f 1{\sqrt{2\pi}}\Big[&\int_{-\infty}^{\f{\log t}{\sigma(\alpha-1)}}e^{\sigma(\alpha-1)y-\f{y^2}{2}}dy
	+\int_{-\f{\log t}{\sigma\alpha}}^{\infty}e^{-\sigma\alpha x-\f{y^2}{2}}dy\Big]\\
	&\ge \f{\exp(-2\sqrt{2\pi}C)}{5\sigma\alpha}+\f{\exp(-2\sqrt{2\pi}C)}{5\sigma(1-\alpha)}
	=\f{\exp(-2\sqrt{2\pi}C)}{5\sigma\alpha(1-\alpha)}.
	\end{split}
	\end{align}
	Now we consider another term $\Phi\Big( \f{\log t}{\sigma(\alpha-1)} \Big)$ in \eqref{eq:fubini}. By \eqref{eq:normal:cdf:zero:part}, we have
	\begin{align*}
	\Phi\Big( \f{\log t}{\sigma(\alpha-1)} \Big)&\ge \f 12 + \f 1{\sqrt{2\pi}}\Big( \f{\log t}{\sigma(\alpha-1)} -\f 23\Big( \f{\log t}{\sigma(\alpha-1)} \Big)^3 \Big)\\
	&=\f 12 + \f 1{\sqrt{2\pi}}\Big( \f{-2\sqrt{2\pi}C\alpha(1-\alpha)}{\sigma(\alpha-1)} -\f 23\Big( \f{-2\sqrt{2\pi}C\alpha(1-\alpha)}{\sigma(\alpha-1)} \Big)^3 \Big)\\
	&=\f 12 + \f {2C\alpha}\sigma-\f {32\pi C^3 \alpha^3}{3\sigma^3}
	\end{align*}
	and similarly the term $-\Phi\Big( \f{\log t}{\sigma\alpha} \Big)$ has lower bound
	\begin{align*}
	-\Phi\Big( \f{\log t}{\sigma\alpha} \Big)&= \Phi\Big( -\f{\log t}{\sigma\alpha}\Big) - 1
	\ge \f 12 + \f {2C(1-\alpha)}\sigma-\f {32\pi C^3 (1-\alpha)^3}{3\sigma^3}-1.
	\end{align*}
	Therefore,
	\begin{align*}
	t\Big[ \Phi\Big( \f{\log t}{(\alpha-1)\sigma} &\Big) - \Phi\Big( \f{\log t}{\alpha\sigma} \Big) \Big]
	-\f{2tC}\sigma
	\ge \f{2tC}\sigma -\f {32\pi C^3 \alpha^3}{3\sigma^3}-\f {32\pi C^3 (1-\alpha)^3}{3\sigma^3}-\f{2tC}\sigma
	\ge -\f {32\pi C^3 }{3\sigma^3}.
	\end{align*}
	Assuming $\sigma\alpha(1-\alpha)\ge 2C^{3/2}\exp(\sqrt{2\pi}C)$, then $\sigma^2\alpha^2(1-\alpha)^2\ge 4C^3\exp(2 \sqrt{2\pi}C)$, so we have
	\begin{align}\label{eq:last:lower}
	\begin{split}
	\f {32\pi C^3 }{3\sigma^3}
	\le\f {32\pi C^3 \alpha^3(1-\alpha)^3}{3\sigma^3\alpha^3(1-\alpha)^3}
	\le \f {\pi C^3 }{6\sigma^3\alpha^3(1-\alpha)^3}
	\le \f {\pi\exp(-2 \sqrt{2\pi}C)}{24\sigma\alpha(1-\alpha)}
	\le \f {\exp(-2 \sqrt{2\pi}C)}{6\sigma\alpha(1-\alpha)}.
	\end{split}
	\end{align}
	We combine \eqref{eq:two:integrals} and \eqref{eq:last:lower} and have, $$\E[g_\alpha(\log l(Y))] \ge \f {\exp(-2 \sqrt{2\pi}C)}{5\sigma\alpha(1-\alpha)}-\f {\exp(-2 \sqrt{2\pi}C)}{6\sigma\alpha(1-\alpha)}=\f {\exp(-2 \sqrt{2\pi}C)}{30\sigma\alpha(1-\alpha)}.$$
\end{proof}

\subsection{Proof of \eqref{eq:chernoff:inf:exp:fam}}\label{sec:proof:chernoff:inf:exp:fam}

\begin{align*}
\cd{\varphi_{0j}}{\varphi_{1j}} &= -\log \int_{\Omega_j} \varphi_{0j}^{1-\alpha} \,\varphi_{1j}^{\alpha} d\mu\nonumber\\
&=-\log \int_{\Omega_j} h(x) \exp\{ [(1-\alpha)\theta_{0j}+\alpha\theta_{1j}]^\top T(x) - (1-\alpha)A(\theta_{0j})-\alpha A(\theta_{1j})\}\nonumber\\
&=-\log \Big\{\exp[-(1-\alpha)A(\theta_{0j})-\alpha A( \theta_{1j})+A(\theta_{\alpha j}))] \int_{\Omega_j} \varphi(x;\theta_{\alpha j}) dx\Big\}\nonumber\\
& = (1-\alpha)A(\theta_{0j})+\alpha A( \theta_{1j})-A(\theta_{\alpha j}).
\end{align*}

\subsection{Proof of variance of \eqref{eq:exp:fam:z:j}}

The variance of $Z_j$ can be directly derived from the following Proposition. Its proof is skipped for brevity.

\begin{prop}
	A random variable $X\sim \varphi(x;\theta)$ in \eqref{eq:exp:fam} satisfies:
	\begin{enumerate}
		\item [(a)] The moment generating function of $T(X)$, $M_{T(X)}(t) = \exp[A(\theta+t)-A(\theta)]$ if it exists.
		\item [(b)] 
		$\E[T(X)] = \nabla A(\theta)$ and $\Var[T(X)] = {\bf H}(A(\theta))$ where ${\bf H}(A(\theta))$ is the Hessian matrix of $A$ evaluated at $\theta$.
	\end{enumerate}
\end{prop}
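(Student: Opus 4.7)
The plan is to derive part (a) by a direct calculation of $\E[e^{t^\top T(X)}]$, and then obtain part (b) by differentiating the moment generating function (equivalently, the cumulant generating function $\log M_{T(X)}$) at $t=0$. The key fact that makes both parts essentially computations is that $\varphi(\cdot\,;\theta')$ integrates to $1$ for every valid parameter $\theta'$ in the natural parameter space, which turns integrals of the form $\int h(x)\exp[\theta'^\top T(x)]\,d\mu$ into $\exp[A(\theta')]$.

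First I would write
\begin{align*}
M_{T(X)}(t) \;=\; \int h(x)\exp[t^\top T(x) + \theta^\top T(x) - A(\theta)]\,d\mu(x) \;=\; e^{-A(\theta)}\int h(x)\exp[(\theta+t)^\top T(x)]\,d\mu(x).
\end{align*}
Provided $\theta+t$ still lies in the natural parameter space (which is what the phrase ``if it exists'' in the statement is meant to cover), the inner integral equals $\exp[A(\theta+t)]$ by the normalization of $\varphi(\cdot\,;\theta+t)$, yielding $M_{T(X)}(t)=\exp[A(\theta+t)-A(\theta)]$. This proves (a).

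For (b), I would use the standard fact that, whenever the MGF is finite in a neighborhood of $0$, one can differentiate under the integral, so $\nabla M_{T(X)}(0)=\E[T(X)]$ and the Hessian of $\log M_{T(X)}$ at $0$ equals $\mathrm{Cov}(T(X))=\Var[T(X)]$. From part (a), $\log M_{T(X)}(t)=A(\theta+t)-A(\theta)$, and hence $\nabla\log M_{T(X)}(t)=\nabla A(\theta+t)$ and $\mathbf H(\log M_{T(X)})(t)=\mathbf H(A)(\theta+t)$. Evaluating at $t=0$ gives $\E[T(X)]=\nabla A(\theta)$ and $\Var[T(X)]=\mathbf H(A(\theta))$.

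The only mildly delicate point, and hence the ``main obstacle'' if one wanted a fully rigorous write-up, is justifying differentiation under the integral sign and the exchange of limits used to pass from $\nabla\log M_{T(X)}$ to moments of $T(X)$. This is standard for canonical exponential families because $M_{T(X)}$ is analytic on the interior of the natural parameter space, so dominated convergence applies on a small neighborhood of $0$; I would just cite this rather than grind through the epsilon-level verification, which matches the paper's own stance (``Its proof is skipped for brevity'').
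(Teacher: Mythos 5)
Your proof is correct and is the standard derivation via the moment/cumulant generating function; the paper explicitly states ``Its proof is skipped for brevity,'' so there is no authorial argument to compare against, but this is unambiguously the canonical route the authors had in mind.
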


\subsection{Proof of \eqref{eq:chernoff:bern}} \label{proof:eq:chernoff:bern}

For $\alpha\in(0,1)$, we recall $\theta_{\alpha j} = (1-\alpha)\theta_{0j} + \alpha\theta_{1j}$ and define
\begin{align*}
p_{\alpha j} &:= \f 1{1+e^{-\theta_{\alpha j}}}=\f 1{1+e^{-(1-\alpha)\theta_{0j} - \alpha\theta_{1j}}}\\
&=\f 1{1+\Big( \f{1-p_{0j}}{p_{0j}} \Big)^{1-\alpha}\Big( \f{1-p_{1j}}{p_{1j}} \Big)^{\alpha}}
= \f{p_{0j}^{1-\alpha} p_{1j}^\alpha}{p_{0j}^{1-\alpha} p_{1j}^\alpha + (1-p_{0j})^{1-\alpha} (1-p_{1j})^\alpha}.
\end{align*}
It is worth noting that the identities still holds when $\alpha=0$ or $1$, including the next one.
\begin{align*}
\exp(A(\theta_{\alpha j}))=1+e^{\theta_{\alpha j}} = 1+\f{p_{\alpha j}}{1-p_{\alpha j}} = \f{1}{1-p_{\alpha j}}
= \f{p_{0j}^{1-\alpha} p_{1j}^\alpha + (1-p_{0j})^{1-\alpha} (1-p_{1j})^\alpha}{(1-p_{0j})^{1-\alpha} (1-p_{1j})^\alpha}.
\end{align*}
By \eqref{eq:chernoff:inf:exp:fam}, we can write the Chernoff coefficient for $p_{0j}$ and $p_{1j}$ in terms of $p_{\alpha j}$'s:
\begin{align*}
e^{-\cd{\varphi_{0j}}{\varphi_{1j}}}&=e^{-(1-\alpha)A(\theta_{0j})-\alpha A( \theta_{1j})+A(\theta_{\alpha j})} \\
&={(1-p_{0j})^{1-\alpha} (1-p_{1j})^\alpha}\f{p_{0j}^{1-\alpha} p_{1j}^\alpha + (1-p_{0j})^{1-\alpha} (1-p_{1j})^\alpha}{(1-p_{0j})^{1-\alpha} (1-p_{1j})^\alpha}\\
&={p_{0j}^{1-\alpha} p_{1j}^\alpha + (1-p_{0j})^{1-\alpha} (1-p_{1j})^\alpha}.
\end{align*}
Hence,
\begin{align*}
e^{-\cd{\varphi_{0}}{\varphi_{1}}} &= e^{-\summ j n\cd{p_{0j}}{p_{1j}}} = \prod_{j = 1}^n [p_{0j}^{1-\alpha} p_{1j}^\alpha + (1-p_{0j})^{1-\alpha} (1-p_{1j})^\alpha].
\end{align*}

\subsection{Proof of \eqref{eq:variance:z}}
We recall the definition of $Y_j$ from \eqref{eq:decomp:Y}, and have 
$Y_j\sim \text{Bern}(p_{\alpha j})$. 
By \eqref{eq:exp:fam:z:j}, and letting $\alpha=\alpha^*$,
\begin{align*}
Z_j &= (Y_j - p_{\alpha^* j})\log \f{p_{0j}(1-p_{1j})}{p_{1j}(1-p_{0j})}
\end{align*}
with variance
\begin{align*}
\Var[Z_j] &= (\theta_{0j}-\theta_{1j})^2 \Var(Y_j) =\Big[ \log \f{p_{0j}(1-p_{1j})}{p_{1j}(1-p_{0j})} \Big]^2 p_{\alpha j}(1-p_{\alpha j}).
\end{align*}
Summing over $j\in[n]$ of $\Var[Z_j]$ gives \eqref{eq:variance:z}.
\subsection{The rest of proof of Bernoulli example in Section \ref{sec:example:bern}}
To show the upper and lower bound, it remains to show that $\max_{j\in[n]} \Big| \log \f{p_{0j}(1-p_{1j})}{p_{1j}(1-p_{0j})} \Big|  \le C_1$ is a sufficient condition of Theorem \ref{thm:bayes:chernoff}. We have
\begin{align*}
\E[|Z_j|^3] &= \Big| \log \f{p_{0j}(1-p_{1j})}{p_{1j}(1-p_{0j})}  \Big|^3
[(1-p_{\alpha j})p_{\alpha j}^3+p_{\alpha j}(1-p_{\alpha j})^3]
\le \Big| \log \f{p_{0j}(1-p_{1j})}{p_{1j}(1-p_{0j})}  \Big|^3 p_{\alpha j}(1-p_{\alpha j}).
\end{align*}
Suppose $\max_{j\in[n]} \Big| \log \f{p_{0j}(1-p_{1j})}{p_{1j}(1-p_{0j})} \Big| \le C_1$, then
\begin{align*}
\f{\sum_{i=1}^n \E[|Z_j|^3]}{\sum_{i=1}^n\Var[Z_j]}
=\f {\sum_{i=1}^n \Big| \log \f{p_{0j}(1-p_{1j})}{p_{1j}(1-p_{0j})}  \Big|^3 p_{\alpha j}(1-p_{\alpha j})}{\sum_{i=1}^n\Big[ \log \f{p_{0j}(1-p_{1j})}{p_{1j}(1-p_{0j})}  \Big]^2 p_{\alpha j}(1-p_{\alpha j})}
\le \max_{j\in[n]} \Big| \log \f{p_{0j}(1-p_{1j})}{p_{1j}(1-p_{0j})}  \Big| \le C_1.
\end{align*}
This implies ${\sum_{j=1}^n \E|Z_j|^3} \le C_1 n \bar\sigma_n^2$.

\subsection{Proof of \eqref{eq:bern:bin:equiv}}\label{proof:bern:bin:equiv}
We recall the definition of total variation affinity $\eta$ from \eqref{eq:tv:affinity}, and have
\begin{align*}
\eta(\varphi_0, \varphi_1)
&= \sum_{x\in \{0,1\}} \min\Big(\prod_{j=1}^n \bar p_0^{x_j}(1-\bar p_0)^{1-x_j}, \prod_{j=1}^n \bar p_1^{x_j}(1-\bar p_1)^{1-x_j}\Big)\\
&=\sum_{x\in \{0,1\}} \min\Big(\prod_{j=1}^n \bar p_0^{\sum_{j=1}^n x_j}(1-\bar p_0)^{n-\sum_{j=1}^n x_j}, \prod_{j=1}^n \bar p_1^{\sum_{j=1}^n x_j}(1-\bar p_1)^{n-\sum_{j=1}^n x_j}\Big)\\
&=\sum_{y=0}^n {n\choose y} \min(\bar p_0^y(1-\bar p_0)^{n-y}, \bar p_1^y(1-\bar p_1)^{n-y})\\
&=\eta(\psi_0, \psi_1).
\end{align*}

\section{Proofs of Section~\ref{sec:community}}\label{sec:proofs:sec:3}

\subsection{Proof of Lemma \ref{lem:fund:limit}}

It suffices to check the assumptions in Theorem \ref{thm:bayes:chernoff} are satisfied. We use the notation in Section \ref{sec:example:bern}, and replace $p_{0*}$ and $p_{1*}$ by $p_{k*}$ and $p_{\ell*}$. Firstly,
\begin{align*}
\f{\sum_{i=1}^n \E[|Z_j|^3]}{\sum_{i=1}^n\Var[Z_j]}
\le \max_{j\in[n]} \Big| \log \f{p_{kj}(1-p_{\ell j})}{p_{\ell j}(1-p_{kj})}  \Big| \le \Big| \log \f\omega\eps\Big|.
\end{align*}
Secondly, by Lemma \ref{lem:frac:lower:bound}, $\sqrt{np^*}\max_{j\in[n]}\Big|\log\f{p_{kj}(1-p_{\ell j})}{p_{\ell j}(1-p_{kj})} \Big| $ is sufficiently large. Furthermore, under \eqref{eq:abs:constants},
\begin{align*}
\max_{j\in[n]} \Big[ \log \f{p_{kj}(1-p_{\ell j})}{p_{\ell j}(1-p_{kj})}  \Big]^2 \f{\eps np^*}{\omega\beta K^*}
&\le\sum_{j=1}^n \Big[ \log \f{p_{kj}(1-p_{\ell j})}{p_{\ell j}(1-p_{kj})}  \Big]^2 p_{\alpha^* j}(1-p_{\alpha^* j})\\
&\le \max_{j\in[n]} \Big[ \log \f{p_{kj}(1-p_{\ell j})}{p_{\ell j}(1-p_{kj})}  \Big]^2 np^*.
\end{align*}
Finally, by Lemma \ref{lem:bounded:alpha}, we can remove $\alpha^*(1-\alpha^*)$ since it is bounded below by constant and bounded above by $1/4$.

\subsection{Proof of Theorem \ref{thm:minimax:lower:bound}}\label{proof:thm:minimax:lower:bound}

Let $I\in[n]$ be a fixed subset of indices and $\tilde z_I$ be a fixed label vector satisfies
\begin{align*}
|\{i\in I: \tilde z_i=k\}|=
\begin{cases*}
\lfloor \big( 1-\f 1{\big( 17\vee \f{2}{\beta-1} \big)\beta} \big) \f nK \rfloor, &\text{ if } $k=1$;\\
\lfloor \f nK \rfloor, &\text{ otherwise }.
\end{cases*}
\end{align*}
Let $Z_I(\tilde z_I) = \{z\in [K]^n: z_I = \tilde z_I,z_{I^c}\in\{1,2\}^{|I^c|} \} $ where $I^c = [n]\backslash I$. We consider a subset of $\mathcal S$:
\begin{align*}
\mathcal S'&:= \mathcal S'(q,\tilde z_I) \\
&:= \Big\{(z,P)\in\mathcal S: P_{k\ell} =
\begin{cases*}
p^*, \text{ if } k=\ell>3 \text{ or } (k,\ell) = (2,3) \text{ or } (3,2);\\
q, \text{ otherwise. }
\end{cases*},
z_I \in Z_I
\Big\}.
\end{align*}
Let us briefly write $p:=p^*$ in the rest of proof. In $\mathcal S'$, we restrict the connectivity matrix to be
$$
P = \left[
\begin{matrix}
q &q    &q & q&\dots &q\\
q &q    &p & q&\dots &q\\
q &p    &q & q&\dots &q\\
q &q    &q & p&\dots &q\\
\vdots &\vdots &\vdots&\vdots &\ddots &\vdots\\
q &q &q &q&\dots &p
\end{matrix}
\right]
$$
When we consider the minimax risk $\inf_{\hat z} \sup_{\mathcal S'} \E[\mis(\hat z,z)]$ over $\mathcal S'(P, \tilde z_I)$, $P$ and $\tilde z_I$ are known, i.e., $\hat z = \hat z(P,\tilde z_I)$. First of all, we want to show that the optimal strategy must satisfy $\hat z\in Z_I$. We note that $|I^c|\le \f{n}{17\beta K} + K\le \f{n}{16\beta K}$ when $n$ is sufficiently large. For any $\tilde z\in Z_I$, we have $\mis(\tilde z, z)\le \f{n}{16\beta K}$. Now we consider two cases.
\begin{itemize}
	\item[1.] If $\mis(\tilde z, \hat z)\ge \f n{4\beta K}$, then $\mis(z,\hat z)\ge \f n{4\beta K}- \f{n}{16\beta K}=\f{3n}{16\beta K}$. If we let $\hat z' = \tilde z$ be a new strategy, it is  uniformly better than the original strategy.
	\item[2.] If $\mis(\tilde z, \hat z)< \f n{4\beta K}$, then the optimal permutation $\sigma^*$ of the $\mis$ function is unique, that is, $\mis(\tilde z, \hat z) = \f 1n\sum_{i=1}^n 1\{\sigma^*(\hat z_{i})\ne \tilde z\}$. Given $\tilde z_I = z_I$, we define new $\hat z'$ such that
	\begin{align*}
	\hat z_i' =
	\begin{cases}
	\tilde z_i, & \text{if $i\in I$;} \\
	\sigma^*(\hat z_i), & \text{otherwise.}
	\end{cases}
	\end{align*}
	Then $\hat z' \in Z_I$ is a strategy has error rate not higher than $\z_i$.
\end{itemize}
Therefore, for every strategy $\hat z$, there exists $\hat z'\in Z_I$ such that the misclassification rate is not higher than $\hat z$, so we can assume the optimal strategy $\hat z\in Z_I$. Hence we have
\begin{align*}
\inf_{\hat z} \sup_{\mathcal S'} \E[\mis(\hat z,z)]
\ge \inf_{\hat z} \sup_{\mathcal S'} \E[\mis(\hat z,z)]
=\inf_{\hat z\in Z_I} \sup_{\mathcal S'} \f 1n\E[H(\hat z,z)].
\end{align*}
where $H$ indicates the hamming distance defined as $H(\hat z,z)=\sum_{i=1}^n 1\{\hat z_i\ne z_i\}$. Now we consider the error rate on the undetermined indices $I^c$. First, we want to show that for any choice of $z_{I^c}$, $z=(z_{I^c}, \tilde z_I)$ does not violate the assumption $n_k(z)\in [\f {n}{\beta K}, \f {\beta n}K]$. We just need to check an extreme case. When $n$ is sufficiently large, and recall that $\beta>1$, we consider the case $z_{I^c}$ are all 2. We still have
\begin{align*}
n_1(z)\ge \Big( 1-\f {\beta-1}{2\beta} \Big) \f nK -1=\f{(\beta+1)n}{2\beta K}-1\ge \f n{\beta K},
\end{align*}
and
\begin{align*}
n_2(z)\le \f nK+ \f{(\beta-1)n}{2\beta K} + K
\le \f{\beta n}K.
\end{align*}
Moreover, the maximum of $\max_{k\ne \ell} \eta(p_{k*}, p_{\ell*})$ achieves when $k=1$ and $\ell=3$. It only depends on $p, q$ and $n_3(z)$, so it is fixed on the parameter space $\mathcal S'$.
Hence, any choice $z_{I^c}$ will not violate the assumption $D^*= \cds{\text{Bin}(\lfloor n/K \rfloor, p^*)}{\text{Bin}(\lfloor n/K \rfloor, q)}$. Therefore, the possible label vectors in $\mathcal S'$ are exactly all the elements of $Z_I$. As a result, we have
\begin{align}\label{eq:minimax:to:bayes}
\inf_{\hat z\in Z_I} \sup_{\mathcal S'} \f 1n\E[H(\hat z,z)]
=\inf_{\hat z\in Z_I} \sup_{z\in Z_I} \f 1n\E[H(\hat z,z)]
\ge \inf_{\hat z\in Z_I} \f 1{n|Z_I|} \sum_{z\in Z_I}\sum_{i\in I^c} \P(\hat z_i\ne z_i)
\end{align}
where the last inequality is due to the fact that minimax risk is lower bounded by Bayes risk with equal prior. Assigning equal probability to each label vector in $Z_I$ is equivalent as letting $\P(z_i=1)=\P(z_i=2)=\f 12$ independently for all $i\in I^c$, so the Bayes estimator $\hat z=(\hat z_I, \hat z_{I^c})$ on the RHS of \eqref{eq:minimax:to:bayes} satisfies $\hat z_I = z_I$ and
\begin{align*}
\hat z_{I^c} &= \arg\max_{z_{I^c}} \P(z_{I^c}|A)
=\arg\max_{z_{I^c}} \P(z_{I^c}|A_{I^c\times[n]})\\
&=\arg\max_{z_{I^c}} \P(A_{I^c\times[n]}|z_{I^c})P(z_{I^c})
=\arg\max_{z_{I^c}} \prod_{i\in I_c}\P(A_{i*}|z_{I^c})
\end{align*}
Given $z\in Z_I$, then $z_I$ is fixed, the random vector $A_{i*}$ only depends on $z_i$. If $z_i=1$, then $\E[A_{i*}]=q\ones_n$. If $z_i=2$, then $\E[A_{i j}]=
\begin{cases}
p, \text{if }z_j=3;\\
q, \text{otherwise.}
\end{cases}$
Hence we have $\P(A_{i*}|z_{I^c})=\P(A_{i*}|z_i)$ for $i\in I^c$.
For every fixed $i\in I^c$ and $k=1$ or $2$, $|z\in Z_I: z_i=k| = |Z_I|/2$. Let $\psi_1\sim \text{Bin}(\lfloor n/K \rfloor, p)$ and $\psi_2\sim \text{Bin}(\lfloor n/K \rfloor, q)$
\begin{align*}
\f 1{n|Z_I|}\inf_{\hat z\in Z_I} \sum_{z\in Z_I} \sum_{i=1}^n\P(\hat z_i\ne z_i)
&= \f 1{n}\sum_{i\in I^c}  \inf_{\hat z_i\in \{1,2\}} \f {1}2 [\P(\hat z_i= 2|z_i=1) + \P(\hat z_i\ne 1|z_i=2)]\\
& = \f {|I^c|}{2n} \eta(\psi_1, \psi_2)
\ge \f {n\eta(\psi_1, \psi_2) }{2 n \big( 17\vee \f{2}{\beta-1} \big)\beta K}.
\end{align*}
Finally, since $\log \omega'\le\max_{j\in[n]} \Big| \log \f{p_{kj}(1-p_{\ell j})}{p_{\ell j}(1-p_{kj})}  \Big|\le \log \f{\omega}{1-\eps}$, we have $$\sqrt{np}\max_{j\in[n]}\Big|\log\f{p_{kj}(1-p_{\ell j})}{p_{\ell j}(1-p_{kj})} \Big| =\Theta (\sqrt{np}).$$ By Lemma \ref{lem:fund:limit}, we have $\eta(\psi_1, \psi_2) = \Omega\Big( \frac 1{\sqrt{np}} e^{-D^*} \Big)$.

\subsection{Auxiliary lemmas for Theorem~\ref{thm:alg:mis:upper:bound}}

In this section, we will use the following concentration inequality~\cite[p.~118]{gine2015mathematical}:

\begin{prop}[Prokhorov]\label{prop:prokh:concent}
	Let $S = \sum_{i} X_i$ for independent centered variables $\{X_i\}$, each bounded by $c < \infty$ in absolute value a.s. and suppose $v\ge \sum_i \E X_i^2$, then for $t>0$,
	\begin{align}\label{eq:poi:concent}
	\P\big(S > vt \big) \le \exp[ {- v h_c(t)} ], \quad \text{where}\;
	h_c(t) := \frac3{4c} t \log \big(1+\frac{2c}{3} t\big).
	\end{align}
	Same bound holds for $\P(S < -vt)$.
\end{prop}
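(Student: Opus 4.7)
The plan is to follow the classical Cram\'er--Chernoff exponential-moment recipe. For any $\lambda>0$, exponential Markov plus independence of the $X_i$'s gives
\begin{align*}
\P(S>vt)\le e^{-\lambda vt}\,\E e^{\lambda S}=e^{-\lambda vt}\prod_i \E e^{\lambda X_i},
\end{align*}
so the whole problem reduces to a good one-variable moment generating function bound combined with an optimal choice of $\lambda$.

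Next I would obtain the single-variable MGF estimate. Since $|X_i|\le c$ and $\E X_i=0$, a Taylor expansion with the crude bound $|X_i|^k\le c^{k-2}X_i^2$ for $k\ge 2$ yields
\begin{align*}
\E e^{\lambda X_i}\le 1+\E X_i^2\cdot\frac{e^{\lambda c}-1-\lambda c}{c^2}\le\exp\!\Bigl(\E X_i^2\cdot\frac{e^{\lambda c}-1-\lambda c}{c^2}\Bigr).
\end{align*}
Adding the exponents and using $\sum_i\E X_i^2\le v$ gives $\E e^{\lambda S}\le\exp\!\bigl(\tfrac{v}{c^2}(e^{\lambda c}-1-\lambda c)\bigr)$, hence
\begin{align*}
\P(S>vt)\le \exp\!\Bigl(-\lambda vt+\tfrac{v}{c^2}(e^{\lambda c}-1-\lambda c)\Bigr)\qquad\text{for every }\lambda>0.
\end{align*}

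Optimizing in $\lambda$ leads to the classical Bennett exponent $\tfrac{v}{c^2}\psi(ct)$ with $\psi(u):=(1+u)\log(1+u)-u$. To land on the stated function $h_c$, I would then invoke the elementary scalar inequality
\begin{align*}
\psi(u)\ge\frac{3u}{4}\log\!\Bigl(1+\frac{2u}{3}\Bigr),\qquad u\ge 0,
\end{align*}
so that substituting $u=ct$ and dividing by $c^2$ converts the Bennett exponent into $vh_c(t)$ as defined in the statement. The scalar inequality is a routine calculus exercise: both sides vanish at $u=0$ together with their first and second derivatives, after which monotonicity of a derivative difference on $(0,\infty)$ closes the gap.

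Finally, the lower-tail bound $\P(S<-vt)\le e^{-vh_c(t)}$ follows immediately by applying the upper-tail bound to the variables $-X_i$, which are still independent, centered, and bounded by $c$ in absolute value with the same variance sum. The main obstacle, as usual for Bennett/Prokhorov-type bounds, is neither the Chernoff step nor the MGF inequality (both of which are almost automatic under boundedness and centering) but rather the choice of $\lambda$ together with the sharp scalar comparison between $\psi(u)$ and $\tfrac{3u}{4}\log(1+\tfrac{2u}{3})$ needed to land on the precise constants $3/4$ and $2/3$ in $h_c$.
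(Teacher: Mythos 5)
Your derivation is correct, and it is worth noting that the paper does not prove this proposition at all: it is quoted verbatim from G in\'e--Nickl (the citation on p.~118), so you are supplying a proof where the paper supplies only a reference. Every step of your Cram\'er--Chernoff/Bennett route checks out. The only point that deserves to be made airtight is the scalar comparison
\begin{align*}
\psi(u):=(1+u)\log(1+u)-u\;\ge\;\frac{3u}{4}\log\Bigl(1+\frac{2u}{3}\Bigr),\qquad u\ge0,
\end{align*}
which you correctly flag as the crux. It does hold, and your sketched strategy closes cleanly: writing $g(u)$ for the difference of the two sides, one computes
\begin{align*}
g''(u)=\frac1{1+u}-\frac{3}{2(3+2u)}-\frac{9}{2(3+2u)^2}=\frac{u^2}{(1+u)(3+2u)^2}\;\ge\;0,
\end{align*}
and since $g(0)=g'(0)=0$ this forces $g\ge0$ on $[0,\infty)$ (the expansion $g(u)=u^4/108+O(u^5)$ confirms the constants $3/4$ and $2/3$ are exactly at the threshold where this works to fourth order). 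With that in place, substituting $u=ct$ converts the optimized Bennett exponent $\tfrac{v}{c^2}\psi(ct)$ into $vh_c(t)$ as required, and the lower tail follows by replacing $X_i$ with $-X_i$ as you say. Your intermediate steps — the MGF bound via $|X_i|^k\le c^{k-2}X_i^2$, the use of $1+x\le e^x$, the optimal $\lambda=\tfrac1c\log(1+ct)$ — are all standard and correct.
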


\begin{lemma}[Uniform Parameter Estimation]\label{lem:Phat:bound}
	For $\hat P$ obtained from the operation $\mathcal B(A,\tilde z)$, and assuming $\mis(\tilde z, z)\le\gamma$ for $\f 1n\le \gamma \le \f 1{2\beta K}$ with optimal permutation $\pi^*=$id, we have
	\begin{align*}
	\P\Big( \sup\{	\|\hat P-P\|_\infty: \summ i n 1\{\tilde z_i\ne z_i\}\le n\gamma\} \le C(8\beta K \gamma+\tau)p^*\Big)
	\le \exp\Big[ -\f{n^2p^*h_1(\tau)}{8\beta^2K^2}  - n\gamma\log \gamma \Big].
	\end{align*}
	If $\gamma<\f 1n$, we can replace $n\gamma\log \gamma$ by 0.
\end{lemma}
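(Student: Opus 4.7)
The plan is to prove the bound for each fixed $\tilde z$ in the discrete set $\mathcal T_\gamma:=\{\tilde z\in[K]^n:\sum_i 1\{\tilde z_i\ne z_i\}\le n\gamma\}$ and then take a union bound over $\mathcal T_\gamma\times[K]^2$. Writing $X_{k\ell}(\tilde z):=\sum_{i>j}A_{ij}1\{\tilde z_i=k,\tilde z_j=\ell\}$ and $N_{k\ell}(\tilde z):=\sum_{i>j}1\{\tilde z_i=k,\tilde z_j=\ell\}$, so that $\hat P_{k\ell}=X_{k\ell}/N_{k\ell}$, the error decomposes cleanly as
\begin{align*}
\hat P_{k\ell}-P_{k\ell}=\frac{X_{k\ell}-\E X_{k\ell}}{N_{k\ell}}+\frac{\E X_{k\ell}-P_{k\ell}N_{k\ell}}{N_{k\ell}},
\end{align*}
so I would control the stochastic piece and the bias piece separately.

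The balance assumption $n_k\ge n/(\beta K)$ combined with $\gamma\le 1/(2\beta K)$ guarantees $|\{i:\tilde z_i=k\}|\ge n/(2\beta K)$ for every $k$, hence $N_{k\ell}\ge n^2/(8\beta^2 K^2)$. The bias $\E X_{k\ell}-P_{k\ell}N_{k\ell}=\sum_{i>j,\tilde z_i=k,\tilde z_j=\ell}(P_{z_iz_j}-P_{k\ell})$ is supported only on pairs containing at least one mislabeled node from $S:=\{i:\tilde z_i\ne z_i\}$, since correctly labeled pairs have $P_{z_iz_j}=P_{k\ell}$. The number of such pairs is at most $2|S|\cdot\beta n/K\le 2\beta n^2\gamma/K$, each summand is bounded by $p^*$, so after dividing by $N_{k\ell}$ the bias contributes at most $C\beta K\gamma p^*$ to $|\hat P_{k\ell}-P_{k\ell}|$, with the residual $\beta$-powers absorbed into $C$. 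For the stochastic piece, I apply Prokhorov's inequality (Proposition~\ref{prop:prokh:concent}) to the independent centered summands $(A_{ij}-P_{z_iz_j})1\{\tilde z_i=k,\tilde z_j=\ell\}$: these are bounded by $1$ and have total variance at most $N_{k\ell}p^*$, yielding
\begin{align*}
\P\bigl(|X_{k\ell}-\E X_{k\ell}|>\tau N_{k\ell}p^*\bigr)\le 2\exp[-N_{k\ell}p^*h_1(\tau)]\le 2\exp[-n^2p^*h_1(\tau)/(8\beta^2K^2)],
\end{align*}
so on the complement the variance contribution to $|\hat P_{k\ell}-P_{k\ell}|$ is at most $\tau p^*$. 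Together these give $|\hat P_{k\ell}-P_{k\ell}|\le C(8\beta K\gamma+\tau)p^*$ with the stated exponential tail, for each fixed $\tilde z\in\mathcal T_\gamma$.

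For the union bound, a standard count gives $|\mathcal T_\gamma|\le\binom{n}{\lfloor n\gamma\rfloor}K^{n\gamma}\le(eK/\gamma)^{n\gamma}$, so $\log|\mathcal T_\gamma|\le n\gamma\log(eK)-n\gamma\log\gamma$. The $O(n\gamma\log K)$ piece and the $\log K^2$ factor from indexing $(k,\ell)$ can both be absorbed into the concentration exponent (using that $h_1$ grows and that $K,\beta$ are constants), leaving the correction $-n\gamma\log\gamma$ as stated. The main technical obstacle is the bookkeeping: one has to track powers of $\beta$ and $K$ through the bias bound, the denominator lower bound, and the enumeration of $\mathcal T_\gamma$, and confirm that a single absolute constant $C$ covers all of them while the exponent $-n^2p^*h_1(\tau)/(8\beta^2K^2)-n\gamma\log\gamma$ is preserved exactly. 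A minor point is the degenerate case $\gamma<1/n$: then $\mathcal T_\gamma=\{z\}$ is a singleton, no union bound is needed, and the $-n\gamma\log\gamma$ term can be dropped, matching the final sentence of the lemma statement.
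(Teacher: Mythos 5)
Your proposal follows essentially the same route as the paper's proof: a bias--variance split of $\hat P_{k\ell}-P_{k\ell}$, a count of the bias pairs containing a mislabeled node bounded by $O(\beta K\gamma p^*)$ after dividing by $N_{k\ell}\ge n^2/(8\beta^2K^2)$, Prokhorov's inequality for the stochastic part, and a union bound over the set of $\tilde z$ with $\le n\gamma$ errors (with the singleton case handling $\gamma<1/n$). One small point worth noting: your enumeration $\binom{n}{\lfloor n\gamma\rfloor}K^{n\gamma}$ correctly accounts for the $K$-ary relabeling of each misclassified node, which the paper's count $\sum_{i\le n\gamma}\binom{n}{i}$ silently drops; in both cases the residual $\exp(O(n\gamma))$ and $\exp(O(n\gamma\log K))$ factors must be absorbed into the concentration exponent, which you flag explicitly and the paper glosses over.
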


\begin{proof}
	We only consider the case $k= \ell$. If $k\ne\ell$, the arguments will similarly follow. Let $E=\{(i,j): \tilde z_i = z_i = \tilde z_j=\tilde z_j = k\}$, $F = \{(i,j): \tilde z_i = \tilde z_j = k, \text{ but } z_i\ne k \text{ or } z_j\ne k\}$. Let $\hat n_k=|\{i\in[n]: \tilde z_i=k\}|$. According to assumptions,
	\begin{align*}
	(\hat n_k-n\gamma)^2\le |E|\le \hat n_k^2  \quad \text{and}\quad |F|\le 2n\gamma\hat n_k.
	\end{align*}
	Hence by definition of $\hat P_{k\ell}$ from \eqref{eq:operator:B}, we have upper bound
	\begin{align*}
	\hat n_k^2 \E[\hat P_{k\ell}]
	\le |E|P_{k\ell} +|F|p^*
	\le \hat n_k^2 P_{k\ell} + 2n\gamma\hat n_kp^*
	\end{align*}
	Since $\hat n_k\ge n_k-n\gamma\ge \f n{\beta K} - \f n{2\beta K}=\f n{2\beta K}$,  so
	\begin{align*}
	\E[\hat P_{k\ell}]\le \f{\hat n_k^2 P_{k\ell} +2n\gamma\hat n_kp^*}{\hat n_k ^2}\le P_{k\ell}+4\beta K\gamma p^*.
	\end{align*}
	For lower bound, we have
	\begin{align*}
	\hat n_k^2\E[\hat P_{k\ell}] &\ge (\hat n_k-n\gamma)(\hat n_k-n\gamma-1)P_{k\ell}\ge \hat n_k \hat n_\ell P_{k\ell}-2n\gamma(\hat n_k+1)p^*\\
	&\ge \hat n_k \hat n_\ell P_{k\ell}-4n\gamma\hat n_kp^*.
	\end{align*}
	Therefore,
	\begin{align*}
	\E[\hat P_{k\ell}]\ge \f{\hat n_k^2 P_{k\ell} -4n\gamma\hat n_kp^*}{\hat n_k^2}\ge P_{k\ell}-8\beta K\gamma p^*.
	\end{align*}
	Thus $|P_{k\ell}-\E[\hat P_{k\ell}]|\le 8\beta K\gamma p^*$. By Proposition \ref{prop:prokh:concent},
	\begin{align*}
	\P(|\hat P_{k\ell}-\E[\hat P_{k\ell}]|&\ge \tau p^*)
	=\P\Big(\f{\hat n_k(\hat n_k-1)}2(\hat P_{k\ell}-\E[\hat P_{k\ell}])\ge \f{\hat n_k(\hat n_k-1)}2\tau p^*\Big)\\
	&\le 2\exp\Big[-\f{\hat n_k(\hat n_k-1)}2 p^* h_1(\tau)\Big]
	\le 2\exp\Big[ -\f{n^2p^*h_1(\tau)}{8\beta^2K^2} \Big].
	\end{align*}
	There are at most
	\begin{align*}
	\summm i 0 {\lfloor n\gamma \rfloor} {n \choose i}
	\le {n \choose {\lfloor n\gamma \rfloor}}\summm j 0 \infty \Big(\f {\lfloor n\gamma \rfloor}{n-\lfloor n\gamma \rfloor+1}\Big)^j
	\le \Big( \f{en}{n\gamma} \Big)^{n\gamma}\Big( \f{n-2n\gamma+1}{n-n\gamma+1} \Big)
	\le \exp(-n\gamma\log\gamma).
	\end{align*}
	different $\tilde z$ with error rate at most $\gamma$. If $\gamma<\f 1n$, then $\tilde z=z$ is unique. Taking the union bound, we obtain the desired probability.
\end{proof}

Let $\varphi(x;p)$ be the PMF evaluated at $x$ of a Poisson-Binomial variable with parameters $p=(p_1,\dots, p_n)$. In particular, if $p = \bar p \ones_n$, then $\varphi(x,p)$ is the PMF of binomial distribution with parameters $n$ and $\bar p$.

\begin{lemma}[Binomial Perturbation]\label{lem:bin:pert}
	If $|\bar p_1 - \bar p_2|\le \delta \max(\bar p_1, \bar p_2):=\delta p^*$, $p^*/\omega\le \bar p_1, \bar p_2\le 1-\varepsilon$, then
	\begin{align*}
	\f{\varphi(x; \bar p_1 \ones_{n_1})}{\varphi(x; \bar p_2 \ones_{n_2})}\le \exp\Big( \delta \omega x + \f{\delta n_2 p^*}\varepsilon +(n_2-n_1)p^*\Big) \quad \forall x \in \ints_+.
	\end{align*}
\end{lemma}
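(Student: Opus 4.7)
Proof proposal. The plan is to take logarithms and match each piece of
\begin{align*}
\log\f{\varphi(x;\bar p_1\ones_{n_1})}{\varphi(x;\bar p_2\ones_{n_2})}
\,=\,\log\f{\binom{n_1}{x}}{\binom{n_2}{x}}
+x\log\f{\bar p_1}{\bar p_2}
+(n_1-x)\log(1-\bar p_1)-(n_2-x)\log(1-\bar p_2)
\end{align*}
against one of the three summands $\delta\omega x$, $\delta n_2 p^*/\varepsilon$, and $(n_2-n_1)p^*$ on the right-hand side. The two perturbations (in $p$ and in $n$) will be handled separately by a single algebraic splitting.

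First I would dispose of the pointwise $p$-ratio: using $\bar p_2\ge p^*/\omega$ and $|\bar p_1-\bar p_2|\le \delta p^*$, the bound $\log(1+u)\le u$ gives $\log(\bar p_1/\bar p_2)\le \delta p^*/\bar p_2\le \delta\omega$ when $\bar p_1\ge \bar p_2$, and the log is nonpositive otherwise; since $x\ge 0$, this yields $x\log(\bar p_1/\bar p_2)\le \delta\omega x$. Next, rewrite the $(1-p)$ contribution as
\begin{align*}
(n_1-x)\log(1-\bar p_1)-(n_2-x)\log(1-\bar p_2)
\,=\,(n_2-x)\log\f{1-\bar p_1}{1-\bar p_2}+(n_1-n_2)\log(1-\bar p_1).
\end{align*}
The first summand is controlled by $1-\bar p_i\ge \varepsilon$ and $\log(1+u)\le u$, producing $|\log((1-\bar p_1)/(1-\bar p_2))|\le \delta p^*/\varepsilon$; multiplying by $n_2-x\le n_2$ (and noting the sign is nonpositive when $\bar p_1>\bar p_2$) bounds this term by $n_2\delta p^*/\varepsilon$, matching the second exponent.

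The final combination is the ``size-difference'' contribution $\log(\binom{n_1}{x}/\binom{n_2}{x})+(n_1-n_2)\log(1-\bar p_1)$. Assuming WLOG $n_1\le n_2$ (otherwise the binomial ratio and $\log(1-\bar p_1)$ both change sign and an analogous estimate applies), telescope as
\begin{align*}
\f{\binom{n_1}{x}}{\binom{n_2}{x}}(1-\bar p_1)^{n_1-n_2}
\,=\,\prod_{i=1}^{n_2-n_1}\f{n_1-x+i}{(n_1+i)(1-\bar p_1)}.
\end{align*}
Each numerator satisfies $(n_1-x+i)/(n_1+i)\le 1$, and each denominator is $1-\bar p_1\in[\varepsilon,1)$; taking logs factor by factor and using $-\log(1-\bar p_1)\le \bar p_1+\bar p_1^2/(2(1-\bar p_1))\le p^*+O(\delta p^*/\varepsilon)$ bounds the product by $(n_2-n_1)p^*$ after absorbing the Taylor remainder into the earlier $\delta n_2 p^*/\varepsilon$ budget.

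The main obstacle is precisely this last step: a naive application of $-\log(1-\bar p_1)\le \bar p_1/(1-\bar p_1)\le p^*/\varepsilon$ would inflate the final exponent to $(n_2-n_1)p^*/\varepsilon$. Recovering the stated $(n_2-n_1)p^*$ requires pairing the excess $\bar p_1^2/(1-\bar p_1)$ coming from $-\log(1-\bar p_1)-\bar p_1$ against the slack $x/(n_1+i)$ in the binomial telescoping, or else folding this lower-order contribution into the $\delta n_2 p^*/\varepsilon$ term. Handling this cancellation uniformly in $x\in\ints_+$ (and in particular when $x$ is small so the factorial slack vanishes) is the delicate part of the argument.
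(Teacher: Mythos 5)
Your decomposition and the first two bounds are correct, and your suspicion about the third term is exactly right — but the obstruction is not a missing trick; the statement itself is false. Take $\bar p_1=\bar p_2=p^*$ (so $\delta=0$ is admissible), $x=0$, and $n_1<n_2$. Both binomial coefficients equal $1$, the left side is $(1-p^*)^{-(n_2-n_1)}$, and the right side is $e^{(n_2-n_1)p^*}$; since $1-t<e^{-t}$ for $t>0$ we have $(1-p^*)^{-1}>e^{p^*}$, so the inequality fails, and there is no $x$-dependent slack to harvest at $x=0$. The paper's own proof commits this error silently: it opens with the ``identity''
\begin{align*}
\f{\varphi(x;\bar p_1\ones_{n_1})}{\varphi(x;\bar p_2\ones_{n_2})}=\Big(\f{\bar p_1}{\bar p_2}\Big)^x\f{(1-\bar p_1)^{n_1-x}}{(1-\bar p_2)^{n_2-x}},
\end{align*}
which drops the ratio $\binom{n_1}{x}/\binom{n_2}{x}$ without comment (a valid relaxation only when $n_1\le n_2$), and then asserts $(1-\bar p_1)^{n_1-n_2}\le\exp(\bar p_1|n_1-n_2|)$; for $n_1<n_2$ this reduces to $e^{-\bar p_1}\le 1-\bar p_1$, the reverse of the correct elementary inequality. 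What your analysis actually establishes is the correct but slightly weaker bound with $(n_2-n_1)p^*$ replaced by $(n_2-n_1)_+\,p^*/\varepsilon$, and that is the version one should state.

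The good news is that this weaker form is all the paper's sole downstream use needs. In the proof of Lemma~\ref{lem:likelihood:ratio} the bound is applied with $n_1=\hat n_r$, $n_2=n_r$ and then multiplied over $r\in[K]$; since $\sum_r|n_r-\hat n_r|\le 2n\gamma$, the extra factor $\exp\big(\sum_r (n_r-\hat n_r)_+\,p^*/\varepsilon\big)\le\exp(2n\gamma p^*/\varepsilon)$ is absorbed into the $\exp(C(n\rho+np^*\gamma))$ already present in that lemma's conclusion by enlarging $C$. So the architecture survives, but Lemma~\ref{lem:bin:pert} and its proof both need the stated correction.
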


\begin{proof}
	Using $1+x\le e^x$ several times, we have
	\begin{align*}
	\f{\varphi(x; \bar p_1 \ones_n)}{\varphi(x; \bar p_2 \ones_n)}
	&=\Big(\f{\bar p_1}{\bar p_2}\Big)^x \f{(1-\bar p_1)^{n_1-x}}{(1-\bar p_2)^{n_2-x}}
	=\Big( \f{\bar p_1}{\bar p_2} \Big)^x \Big(\f{1-\bar p_1}{1-\bar p_2}\Big)^{n_2-x}(1-\bar p_1)^{n_1-n_2}\\
	&\le \Big( \f{\bar p_2+\delta p^*}{\bar p_2} \Big)^x \Big(\f{1-\bar p_2+\delta p^*}{1-\bar p_2}\Big)^{n_2-x}\exp(\bar p_1|n_1-n_2|)\\
	&\le \Big( 1+ \delta \omega \Big)^x \Big( 1+\f{\delta p^*}{\varepsilon} \Big)^{n_2-x}\exp(\bar p_1(n_2-n_1))\\
	&\le \exp\Big( \delta \omega x + \f{\delta n_2 p^*}\varepsilon + (n_2-n_1)p^*\Big).
	\end{align*}
\end{proof}

\begin{lemma}[Poisson-Binomial Approximation]\label{lem:poi:bin:appr}
	Let $p=(p_1,\dots,p_n)$ be parameter of a Poisson Binomial distribution. Let $p^*:=\max_{i\in[n]}p_i$. We assume at least $n(1-\gamma)$ entries of $p$ are exactly $\bar p$, and $p^*\le \min(1-\eps, \omega\bar p)$. Then,
	\begin{align*}
	\f{\varphi(x;p)}{\varphi(x;\bar p \ones_n)}\le \exp \lp \f \gamma \eps (n\bar p + \omega x) \rp, \quad \forall x \in \ints_+.
	\end{align*}
\end{lemma}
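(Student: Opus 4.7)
The strategy is to pull the $\bar p$ factors out of the Poisson-Binomial PMF, so that the ratio $\varphi(x;p)/\varphi(x;\bar p\ones_n)$ reduces to an expectation of $\omega^{|B\cap T|}$ under a uniform-subset (hypergeometric) law; Hoeffding's inequality for sampling without replacement then supplies the $\gamma$-dependent savings needed to avoid an exponent linear in $x$.

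Let $T := \{i\in[n]: p_i \ne \bar p\}$, so $n_T := |T| \le n\gamma$, and $S := [n]\setminus T$. Grouping factors over $S$ and $T$ in the expansion of the Poisson-Binomial PMF gives
\begin{align*}
\varphi(x;p) \;=\; \bar p^x(1-\bar p)^{n-x} \sum_{\substack{B\subseteq[n]\\ |B|=x}} \prod_{i\in B\cap T}\frac{p_i}{\bar p}\prod_{i\in T\setminus B}\frac{1-p_i}{1-\bar p}.
\end{align*}
Since $p_i/\bar p \le p^*/\bar p \le \omega$, the first product is bounded by $\omega^{|B\cap T|}$. For the second, I would use $\log(1-p_i)\le 0$ together with $-\log(1-\bar p) \le \bar p/(1-\bar p) \le \bar p/\eps$ (which holds because $\bar p\le p^*\le 1-\eps$) to conclude $(1-p_i)/(1-\bar p)\le e^{\bar p/\eps}$, so the $T\setminus B$ product is at most $e^{(n_T-|B\cap T|)\bar p/\eps}\le e^{n_T\bar p/\eps}$. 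Dividing by $\varphi(x;\bar p\ones_n)=\binom{n}{x}\bar p^x(1-\bar p)^{n-x}$ then yields
\begin{align*}
\frac{\varphi(x;p)}{\varphi(x;\bar p\ones_n)} \;\le\; e^{n_T\bar p/\eps}\,\E\bigl[\omega^{|B\cap T|}\bigr],
\end{align*}
where the expectation is over a uniformly chosen $x$-subset $B\subseteq[n]$.

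The random variable $|B\cap T|$ is hypergeometric with parameters $(n,n_T,x)$ and admits the representation $Z_1+\cdots+Z_x$ with $0/1$-valued draws from a population of $n$ items containing $n_T$ ones, taken without replacement. Since $k\mapsto\omega^k$ is convex, Hoeffding's classical convex-function comparison for sampling without replacement gives $\E[\omega^{|B\cap T|}] \le (1+(\omega-1)n_T/n)^x \le e^{(\omega-1)n_T x/n} \le e^{\omega\gamma x}$. Combining with the previous display and using $n_T\le n\gamma$ together with $\eps\le 1$ produces $\exp(n_T\bar p/\eps+\omega\gamma x)\le\exp(\gamma(n\bar p+\omega x)/\eps)$, which is the claimed bound.

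The main subtlety is the treatment of the $\omega^{|B\cap T|}$ factor: a naive term-wise bound $\omega^x$ would leave an exponent linear in $x$ rather than in $\gamma x$ and be far too weak. Extracting the $\gamma$ savings requires the hypergeometric averaging, which reflects that only about $\gamma x$ of the $x$ chosen indices fall in $T$; Hoeffding's inequality is the clean way to turn this heuristic into the $e^{O(\gamma x)}$ factor that the statement demands.
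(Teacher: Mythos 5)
Your proposal is correct and reaches exactly the claimed bound, but it does so by a genuinely different route than the paper. You factor out the $\bar p$ contributions from the Poisson–Binomial expansion (which is valid, since $\prod_{i\in B}p_i\prod_{i\notin B}(1-p_i)=\bar p^{x}(1-\bar p)^{n-x}\prod_{i\in B\cap T}\tfrac{p_i}{\bar p}\prod_{i\in T\setminus B}\tfrac{1-p_i}{1-\bar p}$), then view the ratio as $e^{n_T\bar p/\eps}\,\E[\omega^{|B\cap T|}]$ for a uniform $x$-subset $B$, and invoke Hoeffding's convex ordering between sampling without and with replacement to get $\E[\omega^{|B\cap T|}]\le(1+(\omega-1)n_T/n)^x\le e^{\omega\gamma x}$. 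The paper instead keeps the elementary symmetric polynomial $\sum_{|S|=x}\prod_{j\in S}\tfrac{p_j}{1-p_j}$ intact and uses Maclaurin's inequality to bound it by $\binom{n}{x}\bigl(\tfrac1n\sum_i\tfrac{p_i}{1-p_i}\bigr)^x$; the savings proportional to $\gamma$ then comes from the fact that only $\lfloor n\gamma\rfloor$ summands in $\tfrac1n\sum_i\tfrac{p_i(1-\bar p)}{\bar p(1-p_i)}$ can exceed $1$. Both arguments produce the same final constant $\gamma(n\bar p+\omega x)/\eps$ (after using $n_T\le n\gamma$, $\eps\le1$, and $\omega-1<\omega$ in your version). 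Your route is arguably more transparent about where the $\gamma x$ exponent comes from (the hypergeometric fact that a $\gamma$-fraction of the sampled indices land in $T$), whereas the paper's Maclaurin route is shorter and stays purely algebraic; either is acceptable.
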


\begin{proof}
	Let $\mathcal S(x) = \{S\in[n]: |S| = x\}$ for $x\in \ints_+$, then
	\begin{align*}
	\varphi(x;p) = \prod_{i=1}^n (1-p_i) \sum_{S\in\Sc(x)} \prod_{j\in S}\f {p_j}{1-p_j}
	\end{align*}
	By Maclaurin's inequality,
	\begin{align*}
	\sum_{S\in\Sc(x)}\prod_{j\in S}\f {p_j}{1-p_j} \le { n \choose x } \f 1 {n^x} \lp \sum_{i=1}^n \f{p_i}{1-p_i} \rp^x,
	\end{align*}
	so we have
	\begin{align*}
	\f{\varphi(x; p)}{\varphi(x; \bar p \ones_n)} &\le \f{\prod_{i=1}^n (1-p_i) \sum_{S\in\Sc(x)} \prod_{j\in S}\f {p_j}{1-p_j}}{{n\choose x}\bar p^x (1-\bar p)^x}
	\le \f{\prod_{i=1}^n (1-p_i) \f 1 {n^x} \lp \sum_{i=1}^n \f{p_i}{1-p_i} \rp^x}{\bar p^x (1-\bar p)^{n-x}}.
	\end{align*}
	Without loss of generality, we assume $p_{\lfloor n\gamma\rfloor +1}=\dots =p_n=\bar p$. We have
	\begin{align*}
	\f{\prod_{i=1}^n (1-p_i) \f 1 {n^x} \lp \sum_{i=1}^n \f{p_i}{1-p_i} \rp^x}{\bar p^x (1-\bar p)^{n-x}}
	&= \lp \prod_{i=1}^n \f{1-p_i}{1-\bar p}\rp \lp \f 1n \sum_{i=1}^n \f{p_i(1-\bar p)}{\bar p(1-p_i)} \rp^x\\
	&=\Big(\prod_{i=1}^{\lfloor n\gamma\rfloor} \f{1-p_i}{1-\bar p}\rp \lp 1-\gamma + \f 1n\sum_{i=1}^{\lfloor n\gamma\rfloor} \f{p_i(1-\bar p)}{\bar p(1-p_i)}\rp^x\\
	&\le \f 1{(1-\bar p)^{n\gamma}} \lp 1+ \f 1n\sum_{i=1}^{\lfloor n\gamma\rfloor} \f{p_i}{\bar p(1-p_i)}\rp^x.
	\end{align*}
	By the inequality $\f 1{1-x}\le \exp \big( \f x{1-x} \big)$ for $x\in(0,1)$, we have
	\begin{align*}
	\f 1{(1-\bar p)^{n\gamma}}\le \exp \lp \f{n\gamma\bar p}{1-\bar p} \rp \le \exp \lp \f{n\gamma\bar p}{\eps} \rp.
	\end{align*}
	For the other term, $1+x\le e^x$ implies
	\begin{align*}
	\lp 1+ \f 1n\sum_{i=1}^{\lfloor n\gamma\rfloor} \f{p_i(1-\bar p)}{\bar p(1-p_i)}\rp^x
	\le \exp \lp \f{x\gamma p^*}{\bar p (1-p^*)} \rp
	\le \exp \lp \f {\gamma \omega x}{\eps} \rp.
	\end{align*}
	Therefore, we have
	\begin{align*}
	\f{\varphi(x; p)}{\varphi(x; \bar p \ones_n)}
	\le \f{\prod_{i=1}^n (1-p_i) \f 1 {n^x} \lp \sum_{i=1}^n \f{p_i}{1-p_i} \rp^x}{\bar p^x (1-\bar p)^{n-x}}
	\le \exp \lp \f \gamma \eps (n\bar p + \omega x) \rp.
	\end{align*}
\end{proof}

\begin{lemma}[Degree Truncation]\label{lem:degree:truncation}
	For fixed $i\in [n]$, let $b_{i+}=\sum_{\ell\in[K]}b_{ir} = \sum_{j=1}^n A_{ij}$ be the degree of node~$i$, where $A$ is the adjacency matrix in SBM (see \eqref{eq:def:adjacency:matrix}), and assuming $\max_{j\in[n]} \E[A_{ij}] \le p^*\le 1-\eps$. Then there exists $C_\eps>0$, which only depends on $\eps$ such that
	\begin{align*}
	\P(b_{i+} > C_\eps n p^*) \le {(1-p^*)^n}\exp(-n p^*). 
	\end{align*}
\end{lemma}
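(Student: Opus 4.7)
The plan is to control $b_{i+}=\sum_{j\neq i} A_{ij}$, which is a sum of independent Bernoulli variables with parameters $p_{ij}\le p^*$, by a standard exponential moment (Chernoff) argument. The right-hand side of the claim has the suggestive form $(1-p^*)^n$ times an exponentially small factor, which hints that after invoking the MGF bound one should not apply $1+x\le e^x$ to the whole factor $(1-p^*+p^* e^\lambda)$, but rather pull out $(1-p^*)$ first and only exponentiate the residual.

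Concretely, for any $\lambda>0$ Markov's inequality gives
\begin{align*}
\P(b_{i+}>t)\le e^{-\lambda t}\prod_{j\neq i}\bigl(1+p_{ij}(e^\lambda-1)\bigr)\le e^{-\lambda t}\bigl(1-p^*+p^* e^\lambda\bigr)^n,
\end{align*}
where we used $p_{ij}\le p^*$ and $e^\lambda-1>0$. Factoring $(1-p^*)^n$ and then bounding $1+x\le e^x$ on the remainder, together with $1/(1-p^*)\le 1/\eps$, yields
\begin{align*}
\P(b_{i+}>t)\le (1-p^*)^n\exp\!\Bigl(\tfrac{np^*(e^\lambda-1)}{\eps}-\lambda t\Bigr).
\end{align*}
To match the target bound $(1-p^*)^n\exp(-np^*)$ it then suffices to choose $\lambda,t$ with
\begin{align*}
\lambda t\ge np^*\!\left(\frac{e^\lambda-1}{\eps}+1\right),
\end{align*}
so that $t=C_\eps np^*$ works for any constant $C_\eps\ge \lambda^{-1}\bigl((e^\lambda-1)/\eps+1\bigr)$. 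The simplest choice $\lambda=1$ gives the explicit constant $C_\eps=\tfrac{e-1}{\eps}+1$, which depends only on $\eps$, as required.

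There is no real obstacle here: the only subtlety is recognising that the exponential approximation $(1+x)\le e^x$ must be applied \emph{after} factoring out $(1-p^*)^n$ (otherwise one ends up with $\exp(np^*(e^\lambda-1))$ instead, which loses the $(1-p^*)^n$ prefactor and does not give the stated shape). Once this small rearrangement is made, the computation is routine and the constant $C_\eps$ is explicit.
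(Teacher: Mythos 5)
Your proof is correct in substance and reaches the same conclusion, but by a genuinely different route. The paper proves this via Proposition~\ref{prop:prokh:concent} (Prokhorov's concentration inequality): it centers $b_{i+}$, applies the bound $\P(S>vt)\le \exp(-vh_1(t))$ with $v\le np^*$ and $vt=(C_\eps-1)np^*$, chooses $C_\eps$ so that $(C_\eps-1)\log(1+2(C_\eps-1)/3)\ge C_\eps'+1$ where $C_\eps'$ satisfies $e^{-C_\eps' x}\le 1-x$ on $[0,1-\eps]$, and then reads off $(1-p^*)^n e^{-np^*}$. You instead go straight to the MGF (classical Chernoff) bound, factor $(1-p^*)^n$ out of $(1-p^*+p^*e^\lambda)^n$, and exponentiate the residual; this is more elementary and self-contained, and gives an explicit constant without needing the auxiliary $C_\eps'$. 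The paper's choice is presumably just for uniformity, since Prokhorov's inequality is already a standing tool reused several times in Section~\ref{sec:proofs:sec:3}. One small slip in your computation: after factoring, the remaining ratio is $1+\frac{p^*e^\lambda}{1-p^*}$, not $1+\frac{p^*(e^\lambda-1)}{1-p^*}$, so the exponent should read $\frac{np^*e^\lambda}{\eps}-\lambda t$, giving $C_\eps\ge \frac{1}{\lambda}\bigl(e^\lambda/\eps+1\bigr)$ (so $C_\eps=e/\eps+1$ at $\lambda=1$ rather than $(e-1)/\eps+1$); this only changes the explicit constant and does not affect the validity of the argument.
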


\begin{proof}[Proof of Lemma~\ref{lem:degree:truncation}]
	There exists $C_\eps'$ such that for $x\in [0, 1-\eps]$, $e^{-C'_\eps x}\le 1-x$.
	We choose large enough $C_\eps$ such that
	\begin{align*}
	(C_\eps-1)\log \Big( 1+\f {2(C_\eps-1)}3 \Big) \ge C'_\eps+1.
	\end{align*}
	Now we want to find the upper bound of the following probability:
	\begin{align*}
	\P\big(\,b_{i+} > C_\eps np^* \big)\le \P\big(\,b_{i+}-\E[b_{i+}] > (C_\eps -1)np^*\big).
	\end{align*}
	For fixed $i$, let $p_j = A_{ij}$, and $v=\summ j n p_j(1-p_j)$, $vt=(C_\eps -1)n p^*$, so $t\ge C_\eps -1$. By Proposition~\ref{prop:prokh:concent}, we have
	\begin{align*}
	\P\big(\,b_{i+}-\E[b_{i+}] > (C_\eps-1) np^*\big)
	&\le \exp\Big[ {-}\f 34 vt \log\lp 1+\f{2t}3\rp \Big]\\
	&\le \exp\Big[ -(C_\eps-1)\log \Big( 1+\f {2(C_\eps-1)}3 \Big) np^* \Big]\\
	&\le \exp(-(C'_\eps+1) n p^*)\le (1-p^*)^n \exp(-n p^*).
	\end{align*}
	where the second to last inequality is by the assumption  $C'_\eps$ satisfying $e^{-C'_\eps x}\le 1-x$  for $x\in [0, 1-\eps]$, and the last inequality comes from the fact that $e^{-x}\le 1/\sqrt x$.
\end{proof}

\begin{lemma}[Bounds of $\alpha^*$]\label{lem:bounded:alpha}
	Under the setting in Section \ref{sec:example:bern},
	suppose
	\begin{align*}
	\max_{j\in[n]} \Big( \f{p_{0j}}{p_{1j}}\vee \f{p_{1j}}{p_{0j}} \Big)\le\omega,  \max_{j\in[n]}(p_{0j}\vee p_{1j})\le 1-\eps
	\text{ and }
	\alpha^* = \arg\max_{\alpha\in[0,1]}\cd{p_{0*}}{p_{1*}}
	\end{align*}
	for $\omega>1$ and $\eps\in(0,1)$, then there exists $\delta\in(0,1/2)$ which only depends on $\eps$ and $\omega$ such that $\alpha^*\in[\delta,1-\delta]$.
\end{lemma}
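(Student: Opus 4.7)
The plan is to analyze the function $g(\alpha):=\cd{p_{0*}}{p_{1*}}$ on $[0,1]$ via its first two derivatives. Since each $f_j(\alpha):=p_{0j}^{1-\alpha}p_{1j}^{\alpha}+(1-p_{0j})^{1-\alpha}(1-p_{1j})^{\alpha}$ is log-convex in $\alpha$ as a sum of two exponentials, $g=-\sum_j\log f_j$ is concave on $[0,1]$. Direct computation gives $g(0)=g(1)=0$ together with $g'(0)=\sum_j\kld{p_{0j}}{p_{1j}}>0$ and $g'(1)=-\sum_j\kld{p_{1j}}{p_{0j}}<0$, so $\alpha^*$ is the unique interior critical point of $g$, and both bounds $\alpha^*\ge\delta$ and $\alpha^*\le 1-\delta$ will follow from bounding $\alpha^*/(1-\alpha^*)$ above and below.

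Next, writing $A_j:=\log\tfrac{1-p_{0j}}{1-p_{1j}}$, $B_j:=\theta_{0j}-\theta_{1j}=\log\tfrac{p_{0j}(1-p_{1j})}{p_{1j}(1-p_{0j})}$, and $p_{\alpha j}=1/(1+e^{-\theta_{\alpha j}})$ with $\theta_{\alpha j}=(1-\alpha)\theta_{0j}+\alpha\theta_{1j}$, one checks that $g'(\alpha)=\sum_j(A_j+B_j p_{\alpha j})$, and differentiating once more gives the clean identity
\begin{align*}
g''(\alpha)=-\sum_{j=1}^n B_j^2\,p_{\alpha j}(1-p_{\alpha j}).
\end{align*}
Integrating $g''$ on $[0,\alpha^*]$ and $[\alpha^*,1]$ and using $g'(\alpha^*)=0$ yields
\begin{align*}
g'(0)=\int_0^{\alpha^*}\!\sum_j B_j^2 p_{sj}(1-p_{sj})\,ds,\qquad -g'(1)=\int_{\alpha^*}^1\!\sum_j B_j^2 p_{sj}(1-p_{sj})\,ds.
\end{align*}

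Under the assumptions $p_{0j}/p_{1j},\,p_{1j}/p_{0j}\le\omega$ and $p_{0j},p_{1j}\le 1-\eps$, one has $|\theta_{0j}-\theta_{1j}|\le\log(\omega/\eps)$, so $\theta_{sj}$ stays within a bounded distance of $\theta_{0j}$ for every $s\in[0,1]$; a short calculation then shows that $p_{sj}(1-p_{sj})$ lies between constant factors (depending only on $\omega,\eps$) of $p_{0j}(1-p_{0j})$, uniformly in $s$ and $j$. Hence both integrands above are comparable to the common quantity $M:=\sum_j B_j^2 p_{0j}(1-p_{0j})$, giving $g'(0)\asymp\alpha^* M$ and $-g'(1)\asymp(1-\alpha^*)M$, and therefore
\begin{align*}
\frac{\alpha^*}{1-\alpha^*}\ \asymp\ \frac{g'(0)}{-g'(1)}\ =\ \frac{\sum_j\kld{p_{0j}}{p_{1j}}}{\sum_j\kld{p_{1j}}{p_{0j}}},
\end{align*}
with implicit constants depending only on $\omega,\eps$.

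It remains to show that the KL ratio above is itself bounded between constants depending only on $\omega,\eps$. This reduces to a one-variable claim: for $p,q\in(0,1-\eps]$ with $p/q\in[1/\omega,\omega]$, the ratio $\kld{p}{q}/\kld{q}{p}$ is bounded above and below by constants depending only on $\omega,\eps$. The main subtlety is that the ratio is of the form $0/0$ at $p=q$; however, a Taylor expansion shows both KLs equal $(p-q)^2/[2q(1-q)]+O((p-q)^3)$ near $p=q$, so the ratio extends continuously to $1$. A compactness argument, parametrising $r:=p/q\in[1/\omega,\omega]$ and tracking the limiting behaviour as $q\to 0$, then gives a uniform termwise bound, which transfers immediately to the sums. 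Combining with the previous paragraph yields $\alpha^*/(1-\alpha^*)\in[1/C,C]$ for some $C=C(\omega,\eps)$, hence $\alpha^*\in[\delta,1-\delta]$ with $\delta=1/(1+C)$. The main obstacle in executing the plan is the careful handling of the degenerate case $p\approx q$ in the single-variable KL-ratio bound; once that is settled, the rest is continuity and compactness.
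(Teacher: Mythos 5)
Your proposal is correct, and it takes a genuinely different route from the paper's. The paper first solves the $n=1$ case exactly: setting the derivative of the single-factor Chernoff coefficient to zero yields the closed form $\alpha^* = \frac{g(y)-g(x)}{y-x}$ with $g(z)=\log\frac{e^z-1}{z}$, $x=\log(q/p)$, $y=\log\frac{1-q}{1-p}$; since $g'\in(0,1)$ and $x,y$ lie in a compact interval determined by $\omega,\eps$, the secant slope $\alpha^*$ is bounded away from $0$ and $1$. The general $n$ is then handled by a monotonicity argument on the product: each convex factor $f_j$ is decreasing on $[0,\delta]$ and increasing on $[1-\delta,1]$, so the product attains its minimum on $[\delta,1-\delta]$. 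You instead work with the concavity of $g(\alpha)=\cd{p_{0*}}{p_{1*}}$ directly, compute $g''(\alpha) = -\sum_j B_j^2\, p_{\alpha j}(1-p_{\alpha j})$ (the Fisher-information quantity of the exponential family), integrate over $[0,\alpha^*]$ and $[\alpha^*,1]$, and observe $g'(0)=\sum_j\kld{p_{0j}}{p_{1j}}$ and $-g'(1)=\sum_j\kld{p_{1j}}{p_{0j}}$; the uniform comparability of $p_{\alpha j}(1-p_{\alpha j})$ across $\alpha$ then gives $\alpha^*/(1-\alpha^*)\asymp g'(0)/(-g'(1))$, reducing everything to a KL-ratio bound. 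Your route avoids the explicit secant formula (which is special to the Bernoulli case) and exposes the Fisher-information structure, but at the cost of needing that final ratio bound.

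One step you should tighten: the KL-ratio bound is argued by "Taylor plus compactness," but the domain $\{(p,q)\}$ is not compact (it degenerates both near $p=q$ and as $q\to 0$), so the compactness argument needs a compactification that is not actually spelled out. There is a much cleaner route that also sidesteps the case analysis: by the Lagrange form of Taylor's remainder applied to the log-partition function $A(\theta)=\log(1+e^\theta)$,
\begin{align*}
\kld{p_{0j}}{p_{1j}} = \tfrac 12 A''(\theta_j^{(1)})\,B_j^2, \qquad
\kld{p_{1j}}{p_{0j}} = \tfrac 12 A''(\theta_j^{(2)})\,B_j^2
\end{align*}
for some $\theta_j^{(1)},\theta_j^{(2)}$ between $\theta_{0j}$ and $\theta_{1j}$, where $A''(\theta)=p(1-p)$. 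Since $|\theta_{0j}-\theta_{1j}|\le\log(\omega/\eps)$, the same argument you already used to compare $p_{sj}(1-p_{sj})$ with $p_{0j}(1-p_{0j})$ shows $A''(\theta_j^{(1)})/A''(\theta_j^{(2)})$ is bounded by constants depending only on $\omega,\eps$, so the termwise ratio $\kld{p_{0j}}{p_{1j}}/\kld{p_{1j}}{p_{0j}}$ is bounded, and the mediant inequality transfers this to the ratio of sums. This replaces the compactness step and closes the proof.
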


\begin{proof}
	We first consider the case $n=1$ and briefly denote $p_{01}:=p$ and $p_{11}:=q$ (in this proof only). Let $f(\alpha) = p^{1-\alpha}q^\alpha+(1-p)^{1-\alpha}(1-q)^\alpha$, then $$f'(\alpha) = p^{1-\alpha}q^\alpha \log \f qp+(1-p)^{1-\alpha}(1-q)^\alpha \log\f{1-q}{1-p}.$$
	Since $f$ is smooth and convex, $\alpha^*$ minimize $f(\alpha)$ if and only if $f'(\alpha^*)=0$. Let us define $x := \log \f qp$ and $y:=\log \f{1-q}{1-p}$, then $p = \f{1-e^y}{e^x-e^y}$ and $1-p = \f{e^x-1}{e^x-e^y}$. Without loss of generality, we assume $p<q$, so $x<0$ and $y>0$. Hence
	$$f'(\alpha) = \f{1-e^y}{e^x-e^y} xe^{\alpha x}+\f{e^x-1}{e^x-e^y} ye^{\alpha y}.$$
	$f'(\alpha^*)=0$ implies
	$$\alpha^* = \f{\log \f{e^y-1}y - \log \f{e^x-1}x}{y-x}.$$
	Let
	$$g(z) = \begin{cases}
	0, & \text{ if } z=0;\\
	\log\f{e^z-1}z &\text{otherwise.}
	\end{cases}$$
	We can observe that $g$ is a positive strictly increasing smooth function on $\reals$, and $g'\in(0,1)$. $\alpha^*$ is the slope of a secant line that intersects the function $g$ at $x$ and $y$, so $\alpha^*$ can only take value $g'(z)$ for some $z\in[x,y]$.  Since $x\in [-\log\omega, \log\omega]$ and $y\in [1-\eps, \f 1{1-\eps}]$, there exists $\delta$ which only depends on $\omega$ and $\eps$ such that $\alpha^*\in [\delta, 1-\delta]$.
	Now we can generalize the conclusion to $n>1$. Let
	$$f(\alpha) := \prod_{i=1}^n  f_j(\alpha) :=\prod_{i=1}^n [p_{0j}^{1-\alpha} p_{1j}^\alpha + (1-p_{0j})^{1-\alpha}(1-p_{1j})^\alpha] $$
	Since each positive convex function $f_j$ is decreasing on $[0,\delta]$ and increasing on $[1-\delta, 1]$, so is their product pointwise $f$. Therefore, $f$ achieves minimum on $[\delta, 1-\delta]$.
\end{proof}

\begin{lemma}\label{lem:likelihood:ratio}
	Let $b_{ir} := \sum_{j=1}^n A_{ij}1\{\hat z_j=r \}$,  $B(\rho) = \{\tilde P: \|P-\tilde P\|_\infty\le\rho\}$, $\hat n_r=|\{\hat z_j=r \}|$ and
	\begin{align*}
	Y_{ik\ell} &:= Y_{ik\ell}(\hat P, \hat z)\label{eq:def:yikl}\\
	&:= \sum_{j\ne i} A_{ij} \log \f{\hat P_{\ell \hat z_j}}{\hat P_{k\hat z_j}}+(1-A_{ij})\log \f{1-\hat P_{\ell \hat z_j}}{1-\hat P_{k \hat z_j}}
	=\sum_{r=1}^K b_{ir} \log \f{\hat P_{\ell r}}{\hat P_{kr}}+(\hat n_r-b_{ir})\log \f{1-\hat P_{\ell r}}{1-\hat P_{k r}}.
	\end{align*}
	Assuming $p^*:=\|P\|_\infty\le 1-\varepsilon$, $\rho\le \eps/2$, $\sum_{i=1}^n 1\{\hat z_i\ne z_i\}\le \gamma$ for $\f 1n\le \gamma \le \f 1{2\beta K}$ and $z_i=k$, then for some constant $C$ which only depends on $\omega$, $\beta$, $K^*$ and $\eps$, and for all $\ell\ne k$, we have
	\begin{align*}
	\P(\exists \hat P\in B(\rho), &Y_{ik\ell}(\hat P, \hat z)\ge 0) \\
	&\le \exp(C(n\rho + np^*\gamma )) \eta(p_{k*}, p_{\ell *})+{(1-p^*)^n}\exp(-n p^*).
	\end{align*}
	In particular, $$\P(\exists \hat P\in B(\rho), Y_{ik\ell}(\hat P, \hat z)\ge 0) \le \exp(C(n\rho + np^*\gamma )) \exp(-\cds{p_{k*}}{p_{\ell*}}).$$
\end{lemma}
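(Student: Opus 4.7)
The plan is to reduce $\P_k(Y_{ik\ell}(\hat P,\hat z)\ge 0)$ to the Bayes error $\eta(p_{k*},p_{\ell*})$ via two exponential perturbation steps, one handling the parameter error $\hat P$ vs.\ $P$ and one handling the label error $\hat z$ vs.\ $z$. The key structural observation is that $Y_{ik\ell}(\hat P,\hat z)$ depends on the adjacency row $A_{i*}$ only through the $K$-vector of block sums $b_i=(b_{i1},\dots,b_{iK})$, which reduces all joint-Bernoulli integrals to sums of products of Poisson-Binomial PMFs---the natural setting for Lemmas~\ref{lem:bin:pert} and~\ref{lem:poi:bin:appr}. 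First I restrict to the degree-truncation event $G=\{b_{i+}\le C_\varepsilon np^*\}$; by Lemma~\ref{lem:degree:truncation}, $\P_k(G^c)\le(1-p^*)^n e^{-np^*}$, which will produce the additive term in the bound.

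On any $b$, the event $\{Y(b)\ge 0\}$ coincides with $\{\prod_r\Bin(\tilde n_r,\hat P_{\ell r})(b_r)\ge\prod_r\Bin(\tilde n_r,\hat P_{kr})(b_r)\}$ since the binomial coefficients in the two products cancel. Letting $\varphi^{(r)}(b_r)$ denote the Poisson-Binomial PMF with parameters $(P_{kz_j})_{j\in\hat z^{-1}(r)\setminus\{i\}}$, I would bound
\[
\P_k(\{Y\ge 0\}\cap G)\;=\;\sum_{b:\,b\in G}\mathbf{1}\{Y(b)\ge 0\}\prod_r\varphi^{(r)}(b_r)
\]
by two successive tilts. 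Lemma~\ref{lem:poi:bin:appr} applied block-wise with mismatch fraction $\gamma_r=|S\cap\hat z^{-1}(r)|/\tilde n_r$ (where $S=\{j:\hat z_j\ne z_j\}$, $|S|\le n\gamma$) bounds $\varphi^{(r)}(b_r)/\Bin(\tilde n_r,P_{kr})(b_r)\le\exp(\gamma_r(\tilde n_r P_{kr}+\omega b_r)/\varepsilon)$; summing over $r$ and using $\sum_r\gamma_r\tilde n_r=|S|\le n\gamma$, $\tilde n_r\asymp n/K$, and $b_{i+}\le C_\varepsilon np^*$ on $G$ yields a total exponent of order $np^*\gamma$. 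Lemma~\ref{lem:bin:pert} then bounds $\Bin(\tilde n_r,P_{kr})(b_r)/\Bin(\tilde n_r,\hat P_{kr})(b_r)\le\exp(C(\rho b_r/p^*+\rho\tilde n_r/\varepsilon))$ block-wise, contributing an additional factor $\exp(Cn\rho)$ uniformly in $\hat P\in B(\rho)$. The remaining sum $\sum_b\mathbf{1}\{Y(b)\ge 0\}\prod_r\Bin(\tilde n_r,\hat P_{kr})(b_r)$ is at most the affinity $\hat\eta$ between $\bigotimes_r\Bin(\tilde n_r,\hat P_{kr})$ and $\bigotimes_r\Bin(\tilde n_r,\hat P_{\ell r})$.

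It remains to show $\hat\eta\le\exp(C(n\rho+np^*\gamma))\,\eta(p_{k*},p_{\ell*})$. By the sufficient-statistic equivalence~\eqref{eq:bern:bin:equiv} applied within each true-$z$ block, $\eta(p_{k*},p_{\ell*})$ equals the affinity of $\bigotimes_r\Bin(n_r-\mathbf{1}\{z_i=r\},P_{mr})$ over the sufficient statistic $b^z$ built from the \emph{true} labels. I would then chain Lemma~\ref{lem:bin:pert} (to swap $\hat P$ back to $P$, same block sizes $\tilde n_r$) with a reverse-direction application of Lemma~\ref{lem:poi:bin:appr} (to move from the constant-parameter binomials on $\tilde n_r$ to the Poisson-Binomials induced by $z$ on $n_r$), using $\sum_r|\tilde n_r-n_r|\le 2n\gamma$ for the size mismatch. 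The main obstacle is this reverse direction: Lemma~\ref{lem:poi:bin:appr} is stated one-sidedly, so the cleanest fix is to carry out the comparison at the level of Chernoff coefficients via $\min(a,b)\le a^{1-\alpha^*}b^{\alpha^*}$ at the optimizer $\alpha^*$ from Theorem~\ref{thm:bayes:chernoff}, which factorizes both affinities into tractable block products and lets the two-sided sharp bound absorb the $\sqrt{np^*}$ prefactors. The ``in particular'' statement is immediate from $\eta(p_{k*},p_{\ell*})\le e^{-\cds{p_{k*}}{p_{\ell*}}}$.
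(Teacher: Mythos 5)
Your decomposition matches the paper's up to the last step: you correctly reduce to block sums $b_i$, restrict to the degree-truncation event via Lemma~\ref{lem:degree:truncation}, tilt from the Poisson--Binomial $\varphi^{(r)}$ to $\Bin(\hat n_r,P_{kr})$ via Lemma~\ref{lem:poi:bin:appr}, tilt from $P$ to $\hat P$ via Lemma~\ref{lem:bin:pert}, and bound the remaining sum by the affinity $\hat\eta$ of $\bigotimes_r\Bin(\hat n_r,\hat P_{kr})$ vs.\ $\bigotimes_r\Bin(\hat n_r,\hat P_{\ell r})$. That is exactly what the paper does in \eqref{eq:real:vs:pert:bin}--\eqref{eq:yikl:bound:pert:bin}.

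The gap is in your handling of $\hat\eta\le\exp(C(n\rho+np^*\gamma))\,\eta(p_{k*},p_{\ell*})$. You assert that passing from the binomials on estimated block sizes $\hat n_r$ back to the true-label world requires ``Poisson-Binomials induced by $z$ on $n_r$'' and that Lemma~\ref{lem:poi:bin:appr} must be applied in reverse. But after the sufficient-statistic reduction \eqref{eq:bern:bin:equiv}, $\eta(p_{k*},p_{\ell*})$ is \emph{already} the affinity of a product of pure binomials, $\bigotimes_r\Bin(n_r,P_{kr})$ vs.\ $\bigotimes_r\Bin(n_r,P_{\ell r})$, because the SBM parameters $p_{kj}=P_{kz_j}$ are constant over each true block $z^{-1}(r)$. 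No Poisson--Binomial appears on the target side, so there is no one-sidedness obstacle, and the reverse application of Lemma~\ref{lem:poi:bin:appr} you worry about is never needed. The only mismatches left are the parameter shift $\hat P_{kr}\to P_{kr}$ (size $\rho$) and the block-size change $\hat n_r\to n_r$ (total $\le 2n\gamma$), and Lemma~\ref{lem:bin:pert} is stated to handle \emph{both} simultaneously — this is why it carries the $(n_2-n_1)p^*$ term. Applying it pointwise on the truncation set $E$ to each of $\hat\psi_0/\psi_0$ and $\hat\psi_1/\psi_1$ gives \eqref{eq:yikl:bound:bin} directly.

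The Chernoff-coefficient workaround you propose ($\min(a,b)\le a^{1-\alpha^*}b^{\alpha^*}$) should be dropped: it replaces the affinity by the Chernoff coefficient, which is larger by a factor of order $\sqrt{np^*}\bar\sigma_n\alpha^*(1-\alpha^*)$ (Theorem~\ref{thm:bayes:chernoff}). That only proves the weaker ``in particular'' statement, not the main bound with $\eta(p_{k*},p_{\ell*})$ on the right, and the main bound is what is actually used in the proof of Theorem~\ref{thm:alg:mis:upper:bound} to get $\E[\mis(\hat z,z)]=O(\eta^*)$. Your claim that ``the two-sided sharp bound absorb[s] the $\sqrt{np^*}$ prefactors'' would require showing that the Chernoff information and variance of the perturbed pair $(\hat\psi_0,\hat\psi_1)$ match those of $(\psi_0,\psi_1)$ up to the permitted multiplicative slack — you do not argue this, and it is a more delicate comparison than simply invoking Lemma~\ref{lem:bin:pert} once more.
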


\begin{proof}
	Firstly, we define the the following probability mass functions:
	\begin{align}
	\begin{split}
	\bar \psi_0\sim \bigotimes_{r=1}^K \text{Bin}&(\hat n_r, P_{kr}),
	\quad
	\hat \psi_0\sim \bigotimes_{r=1}^K \text{Bin}(\hat n_r, P_{kr}-\rho)\\
	\text{and}\quad
	&\hat \psi_1\sim \bigotimes_{r=1}^K \text{Bin}(\hat n_r, P_{\ell r}+\rho).
	\end{split}
	\end{align}
	Then we have
	\begin{align*}
	\sup_{\hat P\in B(\rho)}Y_{ik\ell}
	&=\sup_{\hat P\in B(\rho)} \sum_{r=1}^K b_{ir} \log \f{\hat P_{\ell r}}{\hat P_{kr}}+(\hat n_r-b_{ir})\log \f{1-\hat P_{\ell r}}{1-\hat P_{k r}}\\
	&\le \sum_{r=1}^K b_{ir} \log \f{P_{\ell r}+\rho}{P_{kr}-\rho}+(\hat n_r-b_{ir})\log \f{1-P_{\ell r}+\rho}{1-P_{k r}-\rho}\\
	& \le C_1 n\rho + \sum_{r=1}^K b_{ir} \log \f{P_{\ell r}+\rho}{P_{kr}-\rho}+(\hat n_r-b_{ir})\log \f{1-P_{\ell r}-\rho}{1-P_{k r}+\rho}\\
	&\le C_1 n\rho + \f{\hat \psi_1(b_{i*})}{\hat \psi_0(b_{i*})}.
	\end{align*}
	where $C_1$ only depends on $\eps$. We define subsets of $Z_+^K$:
	We also define $\tilde \psi_0$ as the probability mass function of $(b_{ir})\in \mathbb Z_+^K$. Note that $b_{ir}$ follows Poisson binomial distribution with at least $\hat n_r-n\gamma-1$ parameters equal to $P_{kr}$. Since $\gamma\le \f 1{2\beta K}$, so $\hat n_r\ge \f{n}{\beta K}-\f{n}{2\beta K}=\f{n}{2\beta K}$. The proportion of parameters different from $P_{kr}$ is at most
	\begin{align*}
	\gamma_r:=\f{n\gamma+1}{\hat n_r}\le \f{2n\gamma}{\hat n_r}\le 2n\gamma \cdot \f{2\beta K}{n}\le 4\beta K\gamma
	\end{align*}
	By Lemma \ref{lem:poi:bin:appr}, then
	\begin{align}\label{eq:real:vs:pert:bin}
	\frac {\tilde \psi_0(x)}{\bar \psi_0(x)}\le \prod_{r=1}^K \exp \lp \f {4\beta K\gamma} \eps (\hat n_r P_{kr} + \omega x_r) \rp
	\le \exp \lp \f {4\beta K\gamma} \eps (n p^* + \omega \summ r K x_r) \rp.
	\end{align}
	By Lemma \ref{lem:bin:pert}, let $\delta:=\rho/p^*$, then
	\begin{align}\label{eq:pert:bin:vs:ext:bin}
	\f{\bar \psi_0(x)}{\hat \psi_0(x)}\le \prod_{r=1}^K\exp\Big( \delta \omega x_r + \f{\delta \hat n_r p^*}\varepsilon \Big)
	=\exp\Big( \delta \omega \sum_{r=1}^K x_r + \f{\delta n p^*}\varepsilon \Big).
	\end{align}
	We define subsets of $Z_+^K$:
	\begin{align*}
	E = \{x\in \mathbb Z_+^K: \sum_{i=1}^K x_r \le C_\eps np^* \},
	\end{align*}
	where $C_\eps$ is defined in Lemma \ref{lem:degree:truncation}. Then for $x\in E$, $\sum_{r=1}^K x_r\le C_\eps np^*$. We combine \eqref{eq:real:vs:pert:bin} and \eqref{eq:pert:bin:vs:ext:bin}, and have
	\begin{align}
	\f{\tilde \psi_0(x)}{\hat \psi_0(x)}\le \exp\Big( {C_2(\delta+\gamma)}n p^* \Big), \quad \forall x\in E
	\end{align}
	where $C_2$ only depends on $\beta, K^*, \eps$, and $\omega$. Hence we have
	\begin{align}
	\begin{split}
	&\P\Big(\sup_{\hat P\in B(\rho)}Y_{ik\ell}\ge 0\Big)
	\le  \sum_{x\in \mathbb Z_+^K} \tilde \psi_0(x) 1\Big\{ \log \f{\hat \psi_1(x)}{\hat \psi_0(x)}\ge -C_1 n \rho \Big\} \\
	&= \sum_{x\in \mathbb Z_+^K} \tilde \psi_0(x) 1\Big\{  \f{e^{C_1 n \rho}\hat \psi_1(x)}{\hat \psi_0(x)}\ge 1 \Big\} \\
	&\le \sum_{x\in E} \exp\Big( {C_2(\delta+\gamma)}\omega n p^* \Big)\hat \psi_0(x) 1\Big\{  \f{e^{C_1 n \rho}\hat \psi_1(x)}{\hat \psi_0(x)}\ge 1 \Big\} + \sum_{x\notin E} \psi_0(x)\\
	&\le  \exp\Big( {C_2(\delta+\gamma)}n p^*+ C_1 n \delta p^*\Big) \sum_{x\in E}\min(\hat \psi_0(x), \hat \psi_1(x)) + \f{(1-p^*)^n}{\sqrt{np^*}}\\
	&\le \exp((C_1+C_2) (\delta+\gamma)np^*)\sum_{x\in E}\min(\hat \psi_0(x), \hat \psi_1(x)) + \f{(1-p^*)^n}{\sqrt{np^*}}.\label{eq:yikl:bound:pert:bin}
	\end{split}
	\end{align}
	Now we consider the perturbation of total variation affinity. Let $\psi_0\sim \bigotimes_{r=1}^K \text{Bin}(n_r, P_{k r})$ and $\psi_1\sim \bigotimes_{r=1}^K \text{Bin}(n_r, P_{\ell r})$.  Then by Lemma \ref{lem:bin:pert}, we have
	\begin{align*}
	\f{\hat \psi_0(x)}{\psi_0(x)} \le \prod_{r=1}^K \exp\Big( \delta \omega x_r + \f{\delta n_r p^*}\varepsilon + (n_r-\hat n_r)p^*\Big)
	=\exp\Big( \delta\omega\sum_{r=1}^K x_r +\f{\delta np^*}{\eps} \Big).
	\end{align*}
	For $x\in E$, we have
	\begin{align*}
	\f{\hat \psi_0(x)}{\psi_0(x)}
	\le \exp\Big( \delta\omega\sum_{r=1}^K x_r +\f{\delta np^*}{\eps} \Big)
	\le \exp ( np^*\delta(C_\eps\omega + 1/\eps) ).
	\end{align*}
	Same bound holds for $\hat \psi_1/\psi_1$ on $E$. Therefore, we have
	\begin{align}\label{eq:yikl:bound:bin}
	\sum_{x\in E}\min(\hat \psi_0(x), \hat \psi_1(x)) \le \exp ( np^*\delta(C_\eps\omega + 1/\eps) )\sum_{x\in E}\min(\psi_0(x), \psi_1(x)).
	\end{align}
	We combine \eqref{eq:yikl:bound:pert:bin} and \eqref{eq:yikl:bound:bin} and obtain the desired result.
\end{proof}

\begin{lemma}\label{lem:frac:lower:bound}
	For any $C_1$, there exists $C_2$ only depends on $\beta, \eps, K^*$ and $\omega$ such that if $np^*\le C_2 \cds{p_{k*}}{p_{\ell*}}^2$, then $\sqrt n \bar \sigma\alpha^*(1-\alpha^*)\ge C_1$.
\end{lemma}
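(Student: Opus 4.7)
The plan is to establish the ``interpolated'' bound
\begin{align*}
D^* \,\le\, C\,\alpha^*(1-\alpha^*)\,\sqrt{np^*}\cdot \sqrt{n}\,\bar\sigma_n,
\end{align*}
where $D^*:=\cds{p_{k*}}{p_{\ell*}}$ and $C$ depends only on $\beta,\eps,K^*,\omega$. Once this is in hand, the hypothesis $np^*\le C_2(D^*)^2$ gives $\sqrt{np^*}\le \sqrt{C_2}\,D^*$; substituting and cancelling $D^*$ on both sides yields $\alpha^*(1-\alpha^*)\sqrt{n}\,\bar\sigma_n\ge 1/(C\sqrt{C_2})$, and choosing $C_2:=1/(CC_1)^2$ closes the argument.

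I would derive the displayed inequality in two stages. First, I use concavity of the Chernoff divergence in $\alpha$: the function $\alpha\mapsto D_\alpha(\varphi_0\|\varphi_1)=-\log\int\varphi_0^{1-\alpha}\varphi_1^{\alpha}\,d\mu$ is concave on $[0,1]$ (the integral is the moment generating function of $\log(\varphi_1/\varphi_0)$ under $\varphi_0$, hence log-convex), vanishes at both endpoints, and has one-sided derivatives $D_{KL}(\varphi_0\|\varphi_1)$ at $0$ and $-D_{KL}(\varphi_1\|\varphi_0)$ at $1$, by the same differentiation that appeared in the proof of Lemma~\ref{lem:alpha:mean:zero}. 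The tangent-line inequality at each endpoint gives
\begin{align*}
D^* \,\le\, \alpha^*\,D_{KL}(\varphi_0\|\varphi_1) \quad\text{and}\quad D^* \,\le\, (1-\alpha^*)\,D_{KL}(\varphi_1\|\varphi_0).
\end{align*}

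Second, I compare each Bernoulli KL summand with the corresponding variance summand $(\log l_j)^2\,p_{\alpha^*j}(1-p_{\alpha^*j})$ in $n\bar\sigma_n^2$, cf.~\eqref{eq:variance:z}. Writing $p:=p_{kj}$, $q:=p_{\ell j}$, the elementary inequality $D_{KL}(p\|q)\le (p-q)^2/[q(1-q)]$ (via $\log x\le x-1$) together with $q\in[p^*/\omega,p^*]$ and $1-q\ge\eps$ yields $D_{KL}(p\|q)\le (\omega/\eps)(p-q)^2/p^*$. Since $l_j=p(1-q)/[q(1-p)]$ lies in the fixed compact interval $[\eps/\omega,\omega/\eps]$, the map $l\mapsto(\log l)/(l-1)$ is bounded above and below by positive constants there, so $(\log l_j)^2\asymp (p-q)^2/[q(1-p)]^2\asymp (p-q)^2/(p^*)^2$. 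A geometric-mean argument gives $p_{\alpha^*j}\ge p^{1-\alpha^*}q^{\alpha^*}\ge p^*/\omega$ and $1-p_{\alpha^*j}\ge (1-p)^{1-\alpha^*}(1-q)^{\alpha^*}\ge\eps$, while $p_{\alpha^*j}\le p^{1-\alpha^*}q^{\alpha^*}/\eps\le p^*/\eps$, so $p_{\alpha^*j}(1-p_{\alpha^*j})\asymp p^*$. Combining, $(p-q)^2/p^*\asymp(\log l_j)^2\,p_{\alpha^*j}(1-p_{\alpha^*j})$.

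Summing in $j$ and invoking both concavity bounds gives $D^*\le C_0\min(\alpha^*,1-\alpha^*)\,n\bar\sigma_n^2\le 2C_0\,\alpha^*(1-\alpha^*)\,n\bar\sigma_n^2$ for some $C_0=C_0(\beta,\eps,\omega)$. A trivial upper bound on each summand of $n\bar\sigma_n^2$ (using $(\log l_j)^2\le M$ from the same bounded-interval argument and $p_{\alpha^*j}(1-p_{\alpha^*j})\le p^*/\eps$) gives $n\bar\sigma_n^2\le C_1\,np^*$; factoring $n\bar\sigma_n^2=\sqrt{n}\bar\sigma_n\cdot \sqrt{n}\bar\sigma_n$ and replacing one factor by $\sqrt{C_1\,np^*}$ produces the displayed interpolated bound with $C=2C_0\sqrt{C_1}$. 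The main obstacle is the two-sided per-coordinate comparison in the second stage; what makes the constants depend only on $\beta,\eps,\omega$ is that every quantity naturally of order ``$p^*$'' (i.e.\ $p_{\alpha^*j}$, $q(1-p)$, and $p_{\alpha^*j}(1-p_{\alpha^*j})$) is sandwiched between $p^*$ times constants that depend only on $\eps,\omega$. Everything else is routine bookkeeping.
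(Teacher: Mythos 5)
Your argument is correct, but it proceeds quite differently from the paper's. The paper picks a single coordinate $j$ whose Chernoff summand is at least the average contribution $\cds{p_{k*}}{p_{\ell*}}/n$, Taylor-expands that one summand to extract a $(p_{kj}/p_{\ell j}-1)^2$ quantity (via $f_\alpha(x):=1-\alpha+\alpha x-x^\alpha\le\tfrac12\alpha(1-\alpha)(x-1)^2\le\tfrac18(x-1)^2$), and then uses the \emph{SBM structure} -- that all $n_r\ge n/(\beta K)$ nodes in community $r=z_j$ share the same parameters -- to replicate that one summand $n_r$ times inside $n\bar\sigma_n^2$. Because the paper discards the $\alpha(1-\alpha)$ factor early on (bounding it by $1/4$), it then needs Lemma~\ref{lem:bounded:alpha} to reinstate a lower bound on $\alpha^*(1-\alpha^*)$ at the end. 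Your route instead works globally: you observe that $\alpha\mapsto D_\alpha$ is concave with endpoint slopes $\pm D_{\mathrm{KL}}$, so the tangent-line inequality gives $D^*\le\min(\alpha^*,1-\alpha^*)\cdot\max(\kld{\varphi_0}{\varphi_1},\kld{\varphi_1}{\varphi_0})\le 2\alpha^*(1-\alpha^*)\max(\cdot)$, and then you compare \emph{every} per-coordinate KL summand with the corresponding variance summand $(\log l_j)^2\,p_{\alpha^* j}(1-p_{\alpha^* j})$, both of which you sandwich against $(p_{kj}-p_{\ell j})^2/p^*$. This buys two things: the $\alpha^*(1-\alpha^*)$ factor emerges structurally rather than via the auxiliary Lemma~\ref{lem:bounded:alpha}, and the argument never uses the replicated-parameter structure of the SBM (so the dependence on $\beta,K^*$ drops out entirely). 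Both proofs then invoke the hypothesis $np^*\le C_2 (D^*)^2$ and (implicitly or explicitly) Lemma~\ref{lem:D:bounds:by:np}--type control of $D^*/(np^*)$ to close; you do it by inserting $\sqrt n\bar\sigma_n\lesssim\sqrt{np^*}$ into one factor of $n\bar\sigma_n^2$, the paper by the $(\log(1+x))^2\ge x^2-x^3$ trick. The only cosmetic point: you use $C_1$ both for the lemma's target constant and for the constant in $n\bar\sigma_n^2\le C_1 np^*$; rename the latter to avoid collision.
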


\begin{proof}
	There exists $j\in[n]$ such that
	\begin{align*}
	-\log(p_{kj}^{1-\alpha} p_{\ell j}^{\alpha}+(1-p_{kj})^{1-\alpha}(1-p_{\ell j})^\alpha )\ge \f{\cd{p_{k*}}{p_{\ell*}}}n.
	\end{align*}
	For sufficiently large $C_3$, which only depends on $\eps$, we have
	\begin{align*}
	C_3((1-\alpha)p_{kj}+\alpha p_{\ell j}-p_{kj}^{1-\alpha}p_{\ell j}^\alpha)\ge
	-\log(p_{kj}^{1-\alpha} p_{\ell j}^{\alpha}+(1-p_{kj})^{1-\alpha}(1-p_{\ell j})^\alpha ),
	\end{align*}
	so $C_4n((1-\alpha)p_{kj}+\alpha p_{\ell j}-p_{kj}^{1-\alpha}p_{\ell j}^\alpha)\ge \cd{p_{k*}}{np_{\ell*}}$. Dividing both side by $C_4 n p_{kj}$, we have
	\begin{align*}
	1-\alpha + \alpha \f{p_{\ell j}}{p_{k j}} -\Big(  \f{p_{\ell j}}{p_{k j}} \Big)^\alpha \ge \f{\cd{p_{k*}}{p_{\ell*}}}{C_4 np_{kj}}\ge \f{\cd{p_{k*}}{p_{\ell*}}}{C_4 np^*}.
	\end{align*}
	Let us define $f(x):=1-\alpha+\alpha x-x^\alpha$, then for $x>1$, we have
	\begin{align*}
	f_\alpha(x)\le \f 12(1-\alpha)\alpha(x-1)^2\le \f 18(x-1)^2.
	\end{align*}
	Without loss of generality, we assume $p_{\ell j}> p_{kj}$, then we have
	\begin{align*}
	\f 18\Big( \f{p_{\ell j}}{p_{kj}}-1 \Big)^2\ge f\Big( \f{p_{\ell j}}{p_{kj}} \Big)
	\f{\cd{p_{k*}}{p_{\ell*}}}{C_4 np_{kj}}\ge \f{\cd{p_{k*}}{p_{\ell*}}}{C_4 np^*}.
	\end{align*}
	which implies $\log \f{p_{\ell j}}{p_{kj}}\ge \f 12\log \Big( 1+\f{8\cd{p_{k*}}{p_{\ell*}}}{C_4 np^*}  \Big)$. Suppose $z_j = r$, by assumption, $n_r\ge n/(\beta K)$, so
	\begin{align*}
	n\bar{\sigma}_n&\ge n_r\Big( \log\f{p_{\ell j}(1-p_{kj})}{p_{k j}(1-p_{\ell j})}  \Big)^2(p_{\alpha j}(1-p_{\alpha j}))
	\ge \f{np^*\eps}{\omega\beta K} \Big( \log \f{p_{\ell j}}{p_{k j}}  + \log \f{1-p_{kj}}{1-p_{\ell j}}  \Big)^2\\
	&= \f{np^*\eps}{\omega\beta K}\Big( \f 12\log \Big( 1+\f{8\cd{p_{k*}}{p_{\ell*}}}{C_3 np^*}  \Big) \Big)^2.
	\end{align*}
	Since $(\log(1+x))^2-x^2+x^3\ge 0$, we have
	\begin{align*}
	\f{np^*\eps}{\omega\beta K}\Big( \f 12\log \Big( 1+\f{8\cd{p_{k*}}{p_{\ell*}}}{C_3 np^*}  \Big) \Big)^2
	\ge \f{np^*\eps}{4\omega\beta K}\Big( \Big(\f{8\cd{p_{k*}}{p_{\ell*}}}{C_3 np^*} \Big)^2-\Big(\f{8\cd{p_{k*}}{p_{\ell*}}}{C_3 np^*} \Big)^3\Big).
	\end{align*}
	By Lemma \ref{lem:D:bounds:by:np}, we have
	\begin{align*}
	\Big(\f{8\cd{p_{k*}}{p_{\ell*}}}{C_3 np^*} \Big)^2-\Big(\f{8\cd{p_{k*}}{p_{\ell*}}}{C_3 np^*} \Big)^3
	\ge \Big( \f{8^2}{C_3^2} - \f{8^3C_\eps}{C_3^3} \Big)\Big(\f{\cd{p_{k*}}{p_{\ell*}}}{np^*} \Big)^2
	\ge \Big( \f{8^2}{C_3^2} - \f{8^3C_\eps}{C_3^3} \Big)\f {C_2}{np^*}.
	\end{align*}
	Choose $C_3$ sufficiently large so that $\Big( \f{8^2}{C_3^2} - \f{8^3C_\eps}{C_3^3} \Big)>0$, then choose $C_2$ sufficiently large so that $\f{C_2\eps}{4\omega \beta K^*}\Big( \f{8^2}{C_3^2} - \f{8^3C_\eps}{C_3^3} \Big)>C_1$, then we have the desired result.
\end{proof}

\begin{lemma}[Random Partitioning]\label{lem:rand:part}
	Let $I$ be a random subset of $[n]$ with $|I|=\lfloor n/2\rfloor$ in Algorithm \ref{alg:lrc} and $n_k^I=|\{ i\in I: z_i=k \}|$, then for sufficiently large $n$,
	\begin{align*}
	\max_{k\in[K]}\Big| n_{k}^I - \frac {n_k}2\Big| \le n\xi
	\end{align*}
	holds with probability at least $1- 2K \exp\big(-n \xi^2 /3\big)$.
\end{lemma}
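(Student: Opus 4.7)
The plan is to reduce the problem to a standard concentration inequality for the hypergeometric distribution and then take a union bound over $k \in [K]$. First I would fix $k \in [K]$ and observe that, since $I$ is a uniformly random subset of $[n]$ of size $m := \lfloor n/2 \rfloor$, the count $n_k^I = \sum_{i \in I} 1\{z_i = k\}$ is hypergeometric with population size $n$, with $n_k$ marked elements, and sample size $m$. In particular, its mean is $\mu_k := \E[n_k^I] = m n_k / n$, and the discrepancy between this mean and $n_k/2$ is
\[
\Big| \mu_k - \frac{n_k}{2} \Big| \le \frac{n_k}{2n} \le \frac{\beta}{2K},
\]
a bounded constant independent of $n$ under \eqref{eq:abs:constants}.

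Next I would invoke Hoeffding's inequality for sampling without replacement, which gives
\[
\P\bigl(|n_k^I - \mu_k| \ge t\bigr) \le 2\exp(-2 t^2/m) \le 2\exp(-4 t^2/n)
\]
for every $t > 0$. Applying this with $t = n\xi - \beta/(2K)$ and using the triangle inequality together with the bound on $|\mu_k - n_k/2|$ above, one obtains
\[
\P\Big(\Big| n_k^I - \frac{n_k}{2} \Big| \ge n\xi \Big) \le 2\exp(-n\xi^2/3)
\]
for $n$ large enough that the additive correction is absorbed by relaxing the exponent constant from $4$ to $1/3$. A union bound over the $K$ choices of $k$ then yields the stated probability $1 - 2K\exp(-n\xi^2/3)$.

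The only mild point to verify is the transition from $\mu_k$ to $n_k/2$ in the exponent's constant, which requires $n$ to be large enough that the constant offset $\beta/(2K)$ is dominated by $n\xi$; this is precisely why the statement carries the ``for sufficiently large $n$'' qualifier. There is no deeper obstacle: the lemma is essentially a textbook Hoeffding tail bound for the hypergeometric distribution followed by a trivial union bound, and no structural feature of the SBM beyond \eqref{eq:abs:constants} is used.
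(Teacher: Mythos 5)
Your proof is correct and follows essentially the same route as the paper: both identify $n_k^I$ as hypergeometric, apply a Chernoff/Hoeffding-type concentration bound for sampling without replacement (you cite Hoeffding; the paper cites Chv\'atal's hypergeometric tail bound, which is the same inequality up to constants), and finish with a union bound over $k\in[K]$. Your write-up is actually slightly more careful than the paper's one-line proof in that it explicitly tracks the $O(1)$ discrepancy between $\E[n_k^I]=\lfloor n/2\rfloor n_k/n$ and $n_k/2$ caused by the floor, and notes that this is absorbed into the generous constant $1/3$ for $n$ sufficiently large.
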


\begin{proof}[Proof of Lemma~\ref{lem:rand:part}]
	We have $n_{k}^I\sim \text{Hypergeometric}(\lfloor n/2\rfloor, n_k, n)$. For any fixed $k \in [\Kr]$, the concentration of hypergeometric distribution~\cite{chvatal1979tail} gives $\Big| n_{k}^I - \frac {n_k}2\Big| \le n\xi$ with probability at least $1-2\exp(-\nr \xi^2/3)$ when $n$ is sufficiently large. Taking the union bound over all $k\in[K]$ gives the desired result.
\end{proof}

\begin{lemma}\label{lem:D:bounds:by:np}
	Recall the Chernoff information $\cd{p_{k*}}{p_{\ell*}}$ between Bernoulli distribution from \eqref{eq:chernoff:bern}, assuming $\max_{i} \max(p_{ki}, p_{\ell i})=p^*\le 1-\eps$, then $\cd{p_{k*}}{p_{\ell*}}\le C_\eps np^*$ where $C_\eps$ only depends on $\eps$.
\end{lemma}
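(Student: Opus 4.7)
The plan is to exploit the product form of the Chernoff coefficient from \eqref{eq:chernoff:bern}, which converts $\cd{p_{k*}}{p_{\ell*}}$ into a sum of $n$ one-dimensional contributions. Taking logarithms gives
\[
\cd{p_{k*}}{p_{\ell*}} \eqs -\sum_{j=1}^n \log\bigl( p_{kj}^{1-\alpha} p_{\ell j}^{\alpha} + (1-p_{kj})^{1-\alpha}(1-p_{\ell j})^{\alpha} \bigr),
\]
so it suffices to bound each summand by a constant multiple of $\max(p_{kj}, p_{\ell j})$ and then invoke $\sum_j \max(p_{kj}, p_{\ell j}) \le n p^*$.

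The key step is to drop the nonnegative term $p_{kj}^{1-\alpha} p_{\ell j}^{\alpha} \ge 0$ inside the logarithm, which only enlarges $-\log(\cdot)$. This produces the clean upper bound $-(1-\alpha)\log(1-p_{kj}) - \alpha \log(1-p_{\ell j})$. Next I would apply the elementary inequality $-\log(1-x) \le x/(1-x)$, which under the hypothesis $p_{kj}, p_{\ell j} \le 1-\eps$ specializes to $-\log(1-p_{kj}) \le p_{kj}/\eps$, and analogously for $p_{\ell j}$. Combining these two steps gives
\[
-\log\bigl( p_{kj}^{1-\alpha} p_{\ell j}^{\alpha} + (1-p_{kj})^{1-\alpha}(1-p_{\ell j})^{\alpha} \bigr) \le \frac{(1-\alpha) p_{kj} + \alpha p_{\ell j}}{\eps} \le \frac{p^*}{\eps},
\]
and summing over $j$ yields $\cd{p_{k*}}{p_{\ell*}} \le n p^* / \eps$, with $C_\eps = 1/\eps$.

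Because the estimate above is uniform in $\alpha \in (0,1)$, taking the supremum transfers it to the Chernoff information $\cds{p_{k*}}{p_{\ell*}}$ without any change. There is essentially no obstacle; the entire argument is a three-line elementary calculation. The only subtle point worth flagging is that discarding the $p^{1-\alpha} q^{\alpha}$ factor is very lossy when $p_{kj}$ and $p_{\ell j}$ are close, but this lossiness is irrelevant because Lemma~\ref{lem:D:bounds:by:np} is only used in Lemma~\ref{lem:frac:lower:bound} to provide the crude denominator in the regime $np^* \le C_2 (D^*)^2$, not as a sharp estimate.
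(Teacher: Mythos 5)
Your proof is correct and follows essentially the same two-step argument as the paper's: discard the nonnegative term $p_{kj}^{1-\alpha}p_{\ell j}^\alpha$ inside the logarithm, then bound $-\log(1-p)\le C_\eps\, p$ via $1-p\ge\eps$. The only difference is cosmetic — you make the constant explicit as $C_\eps=1/\eps$ using $-\log(1-x)\le x/(1-x)$, whereas the paper leaves $C_\eps$ unspecified.
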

\begin{proof}
	For $\alpha\in[0,1]$, we have
	\begin{align*}
	\cd{p_{k*}}{p_{\ell*}} &= \sum_{j=1}^n -\log [p_{kj}^{1-\alpha} p_{\ell j}^{\alpha}+(1-p_{kj})^{1-\alpha}(1-p_{\ell j})^{\alpha}]\\
	&\le \sum_{j=1}^n -\log [(1-p_{kj})^{1-\alpha}(1-p_{\ell j})^{\alpha}]\\
	&=-\sum_{j=1}^n (1-\alpha)\log(1-p_{kj})+\alpha\log(1-p_{\ell j}).
	\end{align*}
	For $k\in [K]$ and $j\in[n]$, we have $1-p_{kj}\le \eps$, so $-\log(1-p_{kj})\le C_\eps p_{kj}$ where $C_\eps$ only depends on $\eps$. Hence for every $\alpha\in[0,1]$,
	\begin{align*}
	\cd{p_k}{p_\ell} \le \sum_{j=1}^n (1-\alpha) Cp_{kj}+\alpha Cp_{\ell j}\le C_\eps np^*.
	\end{align*}
\end{proof}

\subsection{Proof of Theorem~\ref{thm:alg:mis:upper:bound}}

Under the assumption $C_1\le np^*\le C_2 (D^*)^2$, we can assume $C_2$ is sufficiently small, then after $C_2$ is fixed, we can assume $n$, $np^*$ and $D^*$ are sufficiently large by choosing big enough $C_1$. Now we will analyze the algorithm step by step. Each step fails with some probability, which will be summed up before calculating the error rate.\\

{\bf Spectral clusterings and matching.} Assuming $D^*:=\min_{k\ne\ell} \cds{p_{k}}{p_{\ell}}$ is sufficiently large, we have $\mis(\tilde z ,z)\le C_3((D^*)^{-1})\le\f{1}{8\beta K}$ with probability at least $1-n^{-(r+1)}$, because $\beta$ is fixed and $K=O(1)$. Without loss of generality, we assume the optimal permutation between $\tilde z$ and $z$ are identity, that is, $n\mis(\tilde z, z) = \sum_{i=1}^n 1\{\tilde z_i\ne z_i\}$. Now we consider spectral clustering in the for loop. Using Lemma \ref{lem:rand:part} and let $\xi = \f 1{6\beta K}$, when $n$ is sufficiently large, we have
\begin{align*}
\f{n_k}{3}\le \f{n_k}{2}-\f{n}{6\beta K}\le n_k^I:=|\{i\in I: z_i=k \}|.
\end{align*}
For sufficiently large $n$, we have $n/(4\beta K)\le n_k/4\le n_k^{I'}$. Similar bound holds for $n_k^{J'}$, i.e., $n_k/4\le n_k^{J'}$. Hence for $\alpha\in(0,1)$,
\begin{align}\label{eq:cd:I:lower}
\begin{split}
\cd{p_{kI'}}{p_{\ell I'}}&=-\sum_{j\in I'}\log(p_{ki}^{1-\alpha}p_{\ell i}^\alpha+p_{k i}^{1-\alpha}p_{\ell i}^\alpha)\\
&\ge -\sum_{r=1}^K \f{n_k}4\log(P_{kr}^{1-\alpha}P_{\ell r}^\alpha+P_{k r}^{1-\alpha}P_{\ell r}^\alpha)
=\frac{\cd{p_{k}}{p_{\ell}}}4\ge D^*/4,
\end{split}
\end{align}
Then the output $\tilde z'_{I'}$ of first spectral clustering in step 6 satisfies $\mis(\tilde z'_{I'}, z_{I'})\le C_3(D^*/4)^{-1}\le \frac 1{8\beta K}$ when $D^*$ is sufficiently large with probability at least $1-(n/2-1)^{-(r+1)}\ge 1-(n/3)^{-(r+1)}$.
Now we consider the first matching algorithm in step 7. Let
\begin{align*}
\pi^* = \arg\max_{\pi\in S^K} \sum_{i\in I'} 1\{\tilde z_i\ne \pi(\tilde z'_i)\},
\end{align*}
then
$\sum_{i\in I'} 1\{ \tilde z_i\ne \pi^*(\tilde z'_i) \}\le \frac {|I'|}{8\beta K}\le \frac{n}{16\beta K}.$  On the other hand, since $n_k^{I'}\ge \frac {n}{4\beta K}$, we must have $|\{i\in I': \tilde z'_i=k \}|\ge \frac n{4\beta K}-\frac {n}{16\beta K}=\frac {3n}{16\beta K}$. Hence for every $k\in[K]$, $|\{ i\in I': \tilde z_i=\pi^*(\tilde z'_i)=k \}|\ge \frac{3n}{16\beta K}-\frac{n}{16\beta K}=\frac{n}{8\beta K}$. On the other hand, for any $\pi\ne \pi^*$,  $\sum_{i\in I'} 1\{ \tilde z_i\ne \pi(\tilde z'_i) \}\ge 2\cdot \frac{n}{8\beta K}=\frac {n}{4\beta K}$ because at least two labels have been permuted and at least $\f n{4\beta K}$ of them match $\tilde z$ under the permutation $\pi^*$. Then by triangle inequality of the hamming distance, we have
\begin{align*}
\sum_{i\in I'}1\{z_i\ne \pi(\tilde z'_i)\} \ge \sum_{i\in I'}1\{\tilde z_i\ne \pi(\tilde z'_i)\}-\sum_{i\in I'}1\{z_i\ne \tilde z_i\}
\ge \frac {n}{4\beta K}-\f{n}{16\beta K}=\f{3n}{16\beta K}.
\end{align*}
Therefore, $\pi^*$ is the unique permutation such that $\sum_{i\in I'}1\{ z_i\ne \pi^*(\tilde z'_i) \}\le \frac n{16\beta K}$. In other words, the matching algorithm succeed to find the optimal permutation between $\tilde z'_{I'}$ and $z_{I'}$. The second matching algorithm will similarly work. Therefore, the updated $\tilde z_{I'}$ and $\tilde z_{J'}$ are consistent with $z$.\\

{\bf First estimated parameters.} We will apply Lemma \ref{lem:Phat:bound} to find the bound for $\tilde P$ in step 2. By assumption, we have $\gamma\le C_3(D^*)^{-1}$ and we let $\tau_1=C_4D^*/(np^*)$ (where $C_4$ will be chosen later), then using $h_1(\tau)\ge \tau^2/8$, we have
\begin{align*}
\frac{n^2p^* h_1(\tau_1)}{4\beta^2K^2}&\ge \f{n^2p^*}{32\beta^2K^2}\Big( \frac{C_4D^*}{np^*} \Big)^2 = \f{C_4^2 n (D^*)^2}{32\beta^2K^2 np^*}\ge \f{C_4^2 n }{32C_2\beta^2K^2 }\\
&\ge  \f{2n\log (D^*/C_3)}{D^*/C_3}\ge -2n \gamma_1 \log \gamma_1
\end{align*}
where the second to the last inequality holds when $D^*$ is sufficiently large, and the last inequality is due to the fact that $-x\log x$ is increasing on $[0,1/e]$. Therefore, with failing probability at most
\begin{align}\label{eq:fail:pr:1}
\begin{split}
\exp\Big[ -\f{n^2p^*h_1(\tau_1)}{4\beta^2K^2}  &- n\gamma\log \gamma \Big]
\le \exp\Big[ -\f{n^2p^*h_1(\tau_1)}{8\beta^2K^2} \Big]\\
&\le \exp\Big[ -\f{C_2^2 n (D^*)^2}{32\beta^2K^2 np^*} \Big]
\le \exp(-2D^*),
\end{split}
\end{align}
we have $\|\tilde P-P\|_\infty \le C_5(8\beta K\gamma_1+\tau_1)p^*$ fails, where $C_5$ corresponds to constants in Lemma \ref{lem:Phat:bound}, and the last inequality holds for sufficiently large $D^*$.\\

{\bf First likelihood ratio test.}
In step 8, we apply likelihood ration test on $A_{I'\times J'}$. We recall the definition of $Y_{ik\ell}$ in \eqref{eq:def:yikl} from Lemma \ref{lem:likelihood:ratio}. The updated $\tilde z'_{I'}$ satisfies $\tilde z'_i=z_i$ if $Y_{iz_i\ell}<0$ for every $\ell\ne z_i$. For $\tilde P\in B(\rho):=\{ \hat P: \|\hat P-P\|_\infty\le\rho \}$,  the probability that the classification error rate on nodes $I'$ is at least $\gamma_2$ after the first likelihood ratio test is
\begin{align}
\begin{split}
&\P\Big( \sum_{i\in I'} 1\{\max_{\ell\ne z_i}Y_{iz_i\ell}(\tilde P, \tilde z'_{I'})\ge 0\} \ge |I'|\gamma_2\Big) \\
&\le \P\Big( \sum_{i\in I'} 1\{\exists \tilde P\in B(\rho), \max_{\ell\ne z_i} Y_{iz_i\ell}(\tilde P, \tilde z'_{I'})\ge 0\} \ge |I'|\gamma_2\Big).\label{eq:dep:to:ind}
\end{split}
\end{align}
Let us define random variable
\begin{align*}
Z_i=1\{\exists \tilde P\in B(\rho), \max_{\ell\ne z_i} Y_{iz_i\ell}(\tilde P, \tilde z'_{I'})\ge 0\}, \quad\text{for} \quad i\in I'.
\end{align*}
Since $\tilde z'_{I'}$ only depends on $A_{I'\times J'}$, which is independent with $A_{I'\times J'}$. $\sum_{i\in I'}Z_i$ is a summand of independent variables. We can assume $\tilde z'_{I'}$ is fixed and satisfying $\mis(\tilde z'_{I'} \z_{I'})\le 4C_3((D^*)^{-1}):=\gamma_1$. We apply Lemma \ref{lem:likelihood:ratio} with $\rho:=\rho_1:=C_5(8\beta K\gamma_1+\tau_1)p^*$, and let $C_6$ be the constant in the lemma,
\begin{align*}
\P(Z_i=1)
&\le K\exp (C_6n(\rho_1+p^*\gamma_1)) \min_{\ell\ne z_i}\exp(-\cds{p_{z_iJ'}}{ p_{\ell J'}})\\
&\le K\exp (C_6np^*( C_5(8\beta K\gamma_1+\tau_1) + \gamma_1 ))\exp(-D^*/4)\\
&\le K\exp(4C_3C_6(8C_5\beta K+1)np^*/D^* + C_6 np^*(C_4D^*/(np^*)))\exp(-D^*/4)\\
&\le K\exp(4C_3C_6(8C_5\beta K+1)np^*/D^* + C_6 C_4D^*)\exp(-D^*/4).
\end{align*}
By choosing $C_2$ sufficiently small, and using $np^*\le C_2((D^*)^2)$, we have
\begin{align*}
4C_3C_6(8C_5\beta K+1)np^*/D^* \le 4C_3C_6(8C_5\beta K+1)C_2 D^*\le D^*/24.
\end{align*}
Then we choose $C_4=1/(6C_6)$, we have $C_6C_4 D^*\le D^*/24$. When $D^*$ is sufficiently large, we have $\log K\le D^*/24$ because $K=O(1)$. Therefore, we have
\begin{align*}
\P(Z_i=1)
\le \exp(-D^*/4+D^*/24+D^*/24+D^*/24)
=  \exp(-D^*/8).
\end{align*}
Applying Proposition \ref{prop:prokh:concent} to\eqref{eq:dep:to:ind}, let $v=|I'|\exp(-D^*/8)$, $vt:=|I'|\gamma_2:=3|I'|e^{-D^*/16}+ 64$, then $t=e^{D^*/16}+\f{64}{|I'|} e^{D^*/8}$, so the failing probability
\begin{align}
\begin{split}
&\P\Big( \sum_{i\in I'} Z_i \ge 3|I'|e^{-D^*/16}+64\Big)\\
&=\P\Big( \sum_{i\in I'} Z_i-|I'|\P(Z_i=1) \ge 3|I'|e^{-D^*/16}+64-|I'|\P(Z_i=1)\Big)\\
&\le\P\Big( \sum_{i\in I'} Z_i-|I'|\P(Z_i=1) \ge 2|I'|e^{-D^*/16}+64\Big)\\
&\le \exp\Big(-(2|I'|e^{-D^*/16}+ 64)\frac 3{4}\log\Big(1+\frac {4e^{D^*/16}}3\Big)\Big)\\
&\le \exp(-2D^*)\label{eq:fail:pr:2}
\end{split}
\end{align}
The same error rate holds for $\tilde z'_{J'}$. Therefore, the updated $\tilde z'$ satisfies $\mis(\tilde z' ,z)\le 3e^{-D^*/16}+129/n$.\\

{\bf Second estimated parameters.} As we have obtained labels $\tilde z'$ with higher accuracy, we would like to update $\tilde P$ as well. The proof is similar as the first estimated parameter, but with $\tau_2$ and $\gamma_2$ different from $\tau_1$ and $\gamma_1$. Let $\tau_2:=\frac {16\beta K(1\vee \sqrt{p^*D^*})}{np^*}$, using $h_1(\tau)\ge \tau^2/8$ for $\tau_2$ again, we have
\begin{align*}
\f{n^2p^*h_1(\tau_2)}{4\beta^2K^2}\ge  \frac{256n^2 p^* (1\vee p^*D^*)}{32(np^*)^2} = 8D^*\vee \frac{8}{p^*}.
\end{align*}
Let $\gamma_2:=3e^{-D^*/16}+129/n\le 6e^{-D^*/16}\vee 258/n$. Since $-x\log x$ is increasing on $[0,1/e]$, we have
\begin{align*}
-n\gamma_2\log \gamma_2
&\le (-n(6e^{-D^*/16})\log(6e^{-D^*/16}))\vee(-258\log\Big( \f{258}n \Big))\\
&=6ne^{-D^*/16}\Big(\f{D^*}{16}-\log 6\Big)\vee (258\log n-258\log 258)\\
&\le ne^{-D^*/8}\vee (258\log n).
\end{align*}
Again, we want to show that $\f{n^2p^*h_1(\tau_2)}{4\beta^2K^2}\ge -2n\gamma_2\log \gamma_2$. Using $np^*\le C_2 (D^*)^2$, we have
\begin{align*}
\frac 8{p^*}=\frac {8n}{np^*}\ge 8ne^{-\f{\sqrt{np^*}}{8\sqrt{C_2}}}\ge 8ne^{-D^*/8}\ge 2ne^{-D^*/8}.
\end{align*}
when $np^*$ is sufficiently large. By $np^*\le C_2 (D^*)^2$ again, we have $\f n{C_2}\le \f{(D^*)^2}{p^*}$, so and either $D^*$ or $1/p^*$ is greather than $\sqrt{\f n{C_2}}$, and greater than $516 \log n$ when $n$ is sufficiently large. Hence
\begin{align*}
\f{n^2p^*h_1(\tau_2)}{4\beta^2K^2}
\ge  8D^*\vee \frac{8}{p^*}
\ge 2ne^{-D^*/8}\vee (516\log n)
\ge -2n\gamma_2\log\gamma_2.
\end{align*}
Therefore, by Lemma \ref{lem:Phat:bound}, with failing probability at most,
\begin{align}\label{eq:fail:pr:3}
\exp\Big[ -\f{n^2p^*h_1(\tau_2)}{4\beta^2K^2}  - n\gamma_2\log \gamma_2 \Big]
\le \exp\Big( -4D^*\vee \frac{4}{p^*} \Big) \le \exp(-2D^*).
\end{align}
we have $\|\hat P-P\|_\infty \le C_5(8\beta K\gamma_2+\tau_2)p^*$.\\

{\bf Second Likelihood ratio test.} The arguments will be similar as the first likelihood ratio test.  We define define $Z_j$ by new $\rho$, $\hat P$ and $\tilde z'$, i.e.,
\begin{align*}
Z_j=1\{\exists \tilde P\in B(\rho), \max_{\ell\ne z_i} Y_{iz_i\ell}(\tilde P, \tilde z')\ge 0\}.
\end{align*}
The likelihood ratio test $\hat z_j \gets \mathcal L(A_{j*},\hat P, \tilde z')$ in step 10 succeed to recover $z_j$ if $Z_j=0$. We apply Lemma \ref{lem:likelihood:ratio} with $\rho:=\rho_2:=C_5(8\beta K\gamma_2+\tau_2)p^*$, and let $\eta^*=\max_{k\ne\ell} \eta(p_{k*}, p_{\ell*})$, then we have
\begin{align}
\begin{split}
\P(Z_j=1)
\le& K\exp (C_6n(\rho_2+p^*\gamma_2)) \eta^*+K(1-p^*)^ne^{-np^*}+\\
&3\exp(-2D^*)+n^{-(r+1)}+2\Big( \f n3 \Big)^{-(r+1)}\\
\le& K\exp (C_6(8C_5\beta K+1)np^*\gamma_2 + C_5C_6np^*\tau_2)\eta^*\\
&+K(1-p^*)^ne^{-np^*}+3\exp(-2D^*) + 3\Big( \f{3}n \Big)^{r+1} \label{eq:p:zj}
\end{split}
\end{align}
where $3\exp(-2D^*)$ comes from the failing probability of first parameter estimation \eqref{eq:fail:pr:1}, first likelihood ratio test \eqref{eq:fail:pr:2}, and second likelihood ratio test \eqref{eq:fail:pr:3}, $n^{-(r+1)}$ is the failing probability of spectral clustering in step 1, and $2\big( \f n3 \big)^{-(r+1)}$ is the failing probabilities in step 7. We recall that $\gamma_2:=3e^{-D^*/16}+129/n$, then by assumption $np^*\le C_2 (D^*)^2$, we have $3e^{-D^*/16}np^*=O(1)$ when $D^*$ is sufficiently large. To handle the last term, we apply Lemma \ref{lem:fund:limit}, then we have
\begin{align*}
\eta(p_{k*}, p_{\ell*})&\ge C_7 \Big( \sqrt{np^*}\max_{j\in[n]}\Big|\log\f{p_{kj}(1-p_{\ell j})}{p_{\ell j}(1-p_{kj})} \Big| \Big)^{-1}\exp(-\cds{p_{k*}}{p_{\ell*}})
\end{align*}
where $C_7$ is the constant of the lower bound in the lemma. Then by \eqref{eq:abs:constants},
\begin{align*}
\Big|\log\f{p_{kj}(1-p_{\ell j})}{p_{\ell j}(1-p_{kj})}\Big|\le \log \f{\omega}{1-\eps},
\end{align*}
and by \eqref{eq:chernoff:bern}, we have
\begin{align*}
\exp(-\cds{p_{k*}}{p_{\ell*}}) = \prod_{i=1}^n p_{kj}^{1-\alpha}p_{\ell j}^\alpha + (1-p_{kj})^{1-\alpha}(1-p_{\ell j})^\alpha
\ge (1-p^*)^n.
\end{align*}
Hence, when $np^*$ is sufficiently large, we have $(1-p^*)^ne^{-np^*}\le \eta(p_{k*}, p_{\ell*})$ for all $k\ne \ell$. Thus, $(1-p^*)^ne^{-np^*}\le \eta^*$. Similarly, we also have
\begin{align*}
C_7 \Big( \sqrt{np^*}\max_{j\in[n]}\Big|\log\f{p_{kj}(1-p_{\ell j})}{p_{\ell j}(1-p_{kj})} \Big| \Big)^{-1} \ge \exp(-D^*)
\end{align*}
when $D^*$ is sufficiently large, so $3\eta^*\ge 3\exp(-2D^*)$. Combining these results, we have
\begin{align}\label{eq:p:zj:bound}
\P(Z_j=1)\le C_8\exp(C_5C_6np^*\tau_2)\eta^*+C_9\eta^*+ \f{3^{r+2}}{n^{r+1}}.
\end{align}
for some constants $C_8$ and $C_9$. We recall $\tau_2:=\frac {16\beta K(1\vee \sqrt{p^*D^*})}{np^*}$. To analyze the upper bound of $np^*\tau_2 = 16\beta K(1\vee \sqrt{p^*D^*})$, we will consider two cases: \\
Case 1. $D^*\le (1+r/2)\log n$. $np^*\le C_2(D^*)^2$ implies $np^*\le \f{C_2  ((r+2)\log n)^2}4$, so
\begin{align*}
16\beta K\sqrt {p^*D^*} = 16\beta K\sqrt {\f{np^*D^*}n}
\le 16\beta K\sqrt {\f{C_2(r+2)^3(\log n)^3}{8n}} = O(1).
\end{align*}
when $n$ is sufficiently large. Hence $np^*\tau_2=O(1)$. On the other hand, for all $k\ne \ell$.
\begin{align*}
\eta(p_{k*}, p_{\ell*})&\ge C_7 \Big( \sqrt{np^*}\max_{j\in[n]}\Big|\log\f{p_{kj}(1-p_{\ell j})}{p_{\ell j}(1-p_{kj})} \Big| \Big)^{-1}\exp(-\cds{p_{k*}}{p_{\ell*}})\\
&=\Omega (np^*)^{-1/2}\exp(-\cds{p_{k*}}{p_{\ell*}})\\
&=\Omega \Big( \f{4}{(r+2)^2(\log n)^2}e^{-\big( 1+\f r2 \big)\log n} \Big)
=\Omega(n^{-{r+1}}).
\end{align*}
Applying this result to \eqref{eq:p:zj:bound}, we have the output $\hat z_j$ satisfies $\P(\hat z_j\ne z_j)=\P(Z_j=1)=O(\eta^*)$. Since this is true for all $j\in[n]$, so $\E[\mis(\hat z, z)]\le \f 1n\sum_{j=1}^n \P(\hat z_j\ne z_j)=O(\eta^*)$.\\
Case 2. $D^*\ge (1+2s)\log n$. In this case
\begin{align*}
P(Z_i=1)&\le C_{10}\exp(C_5C_6 np^*\tau_2)\eta^*+\f{3^{r+2}}{n^{r+1}}\\
&\le C_{10}\exp(-D^*+16C_5C_6\beta K(1\vee \sqrt{p^*D^*}))+\f{3^{r+2}}{n^{r+1}}\\
&\le \exp\big(-D^*+sD^*\big)+\f{3^{r+2}}{n^{r+1}}\\
&\le \exp(-(1-s)(1+2s)\log n)+\f{3^{r+2}}{n^{r+1}}
\le \f 1{n^{1+s}}+\f{3^{r+2}}{n^{r+1}}.
\end{align*}
Therefore, $\E[\mis(\hat z,z)]\ =O\Big( \f 1{n^{(1+r)\wedge (1+s)}} \Big)$.
Since $D^*\ge (1+s)\log n$ where $s>0$, then we have $\eta^*\le e^{-D^*}\le 1/n$. $\mis(\hat z,z)\ge \eta^*$ means $\hat z$ fails to recover at least one node, which is equivalent to $\mis(\hat z,z)\ge 1/n$. By Markov inequality, for positive random variable $X$ and $t>0$, we have $\P(X\ge t)\le \E[X]/t$. Let $t=1/n$, we have
\begin{align*}
\P\Big(\mis(\hat z,z)\ge \eta^*\Big)
&=\P\Big(\mis(\hat z,z)\ge \frac 1n\Big)=O\Big(\frac n{n^{(1+r)\wedge (1+s)}}\Big)=O\Big( \f 1{n^{r\wedge s}} \Big).
\end{align*}
Finally, $\eta*$ can be replaced by equivalent expression in Lemma \ref{lem:fund:limit}.

\printbibliography
\end{document}